\theoremstyle{plain}
\newtheorem{thm}{Theorem}
\newtheorem{cor}[thm]{Corollary}
\newtheorem{lem}[thm]{Lemma}
\newtheorem{prop}[thm]{Proposition}
\newtheorem{question}[thm]{Question}
\newtheorem{conjecture}[thm]{Conjecture}
\newtheorem{definition}[thm]{Definition}
\theoremstyle{remark}
\newtheorem{remark}[thm]{Remark}
\def\bbz{\mathbb{Z}}
\def\bbq{\mathbb{Q}}
\def\bbr{\mathbb{R}}
\def\bbc{\mathbb{C}}
\def\bbe{\mathbb{E}}
\def\bbp{\mathbb{P}}
\def\ocal{\mathcal{O}}
\def\ocal{\mathcal{O}}
\def\pcal{\mathcal{P}}
\def\ncal{\mathcal{N}}
\def\afr{\mathfrak{a}}
\def\Bfr{\mathfrak{B}}
\def\pfr{\mathfrak{p}}
\def\Afr{\mathfrak{A}}
\def\f{\mathfrak{f}}
\def\xbf{\mathbf{x}}
\def\abar{\overline{A}}
\def\bbar{\overline{B}}
\def\sbar{\overline{S}}
\DeclareMathOperator\SL{SL}
\DeclareMathOperator\gr{gr}
\newcommand{\wt}[1]{\widetilde{#1}}
\newcommand{\wh}[1]{\widehat{#1}}
\def\h{\hspace{1mm}}
\def\vare{\varepsilon}
\def\be{\begin{equation}}
\def\ee{\end{equation}}
\def\one{\mathds{1}}
\def\abar{\overline{A}}
\def\bbar{\overline{B}}
\newcommand{\gen}[2]{\langle #1 \rangle_{#2}}
\begin{document}
\title{Sum-product phenomena: $\pfr$-adic case.}
\author{Alireza Salehi Golsefidy}
\address{Mathematics Dept, University of California, San Diego, CA 92093-0112}
\email{golsefidy@ucsd.edu}
\thanks{A. S-G. was partially supported by the NSF grant DMS-1303121, the A. P. Sloan Research Fellowship. Parts of this work was done when  I was visiting Isaac Newton Institute and the MSRI, and I would like to thank both of these institutes for their hospitality.}
\subjclass{11B75}
\date{\today}
\begin{abstract}
The sum-product phenomena over a finite extension $K$ of $\bbq_p$ is explored. The main feature of the results is the fact that the implied constants are independent $p$.
\end{abstract}
\maketitle
\section{Introduction}
\subsection{Bounded generation phenomena and the general approach towards proving them.}\label{ss:introduction}
Let $R$ be a unital commutative ring. Given two subsets $A$ and $B$ of $R$, we define the {\em sum set} 
\[
A+B:=\{a+b|\h a\in A, b\in B\},
\]
the {\em difference set}
\[
A-B:=\{a-b|\h a\in A, b\in B\},
\]
and the {\em product set}
\[
A\cdot B:=\{ab|\h a\in A, b\in B\}.
\]
We also define the $l$-fold sum set and the $l$-fold product set of a subset $A$ of $R$:
\be\label{e:sum-product-set}
\textstyle
\sum_l A:=\{a_1+\cdots+a_l|\h a_i\in A\}\h\h{\rm and }\h\h
\prod_l A:=\{a_1\cdots a_l|\h a_i\in A\}.
\ee
Starting with a subset $A$ of the ring $R$, the subring generated by $A$ is denoted by $\langle A\rangle$; that means $\langle A\rangle$ is the smallest subset of $R$ which contains $A$ and $\langle A\rangle \cdot \langle A\rangle\subseteq \langle A\rangle$ and $\langle A\rangle-\langle A\rangle=\langle A\rangle$. In order to have a measurement on how fast $A$ generates the subring $\langle A\rangle$, we define 
\be\label{e:ring-generating-set}
\textstyle
\langle A\rangle_l:=\sum_l\prod_l A-\sum_l\prod_l A;
\ee
and so, if $0,1\in A$, then $\langle A\rangle=\bigcup_{l=1}^{\infty}\langle A\rangle_l$.

The remarkable {\em sum-product phenomenon} in a finite field, proved by Bourgain, Katz, and Tao~\cite{BKT} (see Lemma~\ref{l:BoundedGenerationFiniteField}), implies that: for any $\vare>0$ there is a positive integer $C$ such that for a subset $A$ of a finite field $\f$, if $\log |A|\ge \vare \log |\f|$ and $0,1\in A$, then $\langle A\rangle_C=\langle A\rangle$. Based on this result, Helfgott~\cite{Hel1} proved a {\em product} theorem in $\SL_2(\f_p)$ where $\f_p$ is the finite field of prime order $p$; this result implies that for any $\vare>0$ there is a positive integer $C$ such that, for any symmetric generating set $A$ of $\SL_2(\f_p)$ of cardinality at least $|\SL_2(\f_p)|^{\vare}$, we have $\prod_C A=\SL_2(\f_p)$. One can view these results as examples of {\em bounded
generation phenomena}. Later the product theorem had been extended to all the finite simple groups of Lie type (see~\cite{Hel2} for $\SL_3(\f_q)$ case and either \cite{BGT} or \cite{PS} for the general case); and this product theorem implies a bounded generation result for such groups. Within the proof of the mentioned sum-product result for finite fields, the {\em vector space} structure of such fields had been used and it was proved that for any $\vare>0$ there is a positive integer $C$ such that, for any subset $A$ of a finite field $\f$, if $\log |A|\ge \vare \log |\f|$, then there are $\alpha_1,\ldots,\alpha_C\in \f$ such that $\alpha_1 A
+\cdots+\alpha_CA=\f$; this result can be viewed as yet another example of bounded generation phenomena. Based on these examples, one can philosophize and vaguely formulate a {\em na\"ive} bounded generation phenomenon that says: if a finite algebraic structure $\Afr$ is {\em rich enough}, then any {\em generic} subset $A$ of $\Afr$ of cardinality at least $\vare\log|\Afr|$ generates $\Afr$ in $C$ steps, where $C$ only depends on $\vare$. There is, however, one algebraic obstruction to the proposed bounded generation phenomenon: having a bounded generation for $\Afr$ passes to all of its factors; that means if $\pi:\Afr\rightarrow \pi(\Afr)$ is a surjective homomorphism of $\Afr$ and $A$ generates $\Afr$ in $C$ steps, then $\pi(\Afr)$ can be generated by $\pi(A)$ in $C$ steps as well. So one would need to have $\log|\pi(A)|\ge \vare' \log|\pi(\Afr)|$ for some $\vare'>0$ independent of $\pi$. Notice that this obstruction does not show up in a finite field $\f$ or $\SL_2(\f_p)$ as they do not have a lot of factors. One can see the subtlety of this issue already in the ring $\bbz/p^n\bbz$. This ring has $n$ factors: for any $1\le i\le n$, let $\pi_{p^i}:\bbz/p^n\bbz\rightarrow \bbz/p^i\bbz$ be the natural quotient map. And so for a subset $A$ of $\bbz/p^n\bbz$ that contains $0,1$ and satisfies $\log |A|\ge \vare \log |\bbz/p^n\bbz|$, one cannot expect to get $\langle A\rangle_C=\bbz/p^n\bbz$ for a constant $C$ which depends only on $\vare$ unless $\log |\pi_{p^i}(A)|\ge \vare' \log |\bbz/p^i\bbz|$ for any $1\le i\le n$ and some $\vare':=\vare'(\vare)>0$. By fixing $p$ and varying $n$, we get the following $p$-adic interpretation of the above mentioned case: for a subset $A$ of the ring $\bbz_p$ of $p$-adic integers and a number $0<\delta<1$, let $\ncal_{\delta}(A)$ be the smallest positive integer $n$ such that there are balls $B_1, \ldots, B_n$ of radius $\delta$ (with respect to the standard metric on $\bbz_p$) such that $A\subseteq B_1\cup \cdots \cup B_n$. Notice that $\ncal_{p^{-i}}(A)=|\pi_{p^i}(A)|$ as any ball of radius $p^{-i}$ is an additive coset of $p^{i}\bbz_p$ in $\bbz_p$; and so $\log |\pi_{p^i}(A)|\ge \vare' \log|\pi_{p^i}(\bbz_p)|$ is equivalent to $\log \ncal_{p^{-i}}(A)/\log (p^i)\ge \vare'$. On the other hand, let us recall that the lower box dimension of a subset $A$ of a metric space is defined to be $\liminf_{\delta \rightarrow 0^+} \log \ncal_{\delta}(A)/\log(1/\delta)$; since for us the analysis in a given scale $\delta$ is important, we call $\log \ncal_{\delta}(A)/\log(1/\delta)$ {\em the box dimension of $A$ at the scale $\delta$}. Therefore, for a subset $A$ of $\bbz_p$ with lower box dimension $\vare>0$, we have that, if $n$ is a large enough integer depending on $\vare$, then the box dimension $\log \ncal_{p^n}(A)/\log(p^n)$ of $A$ at the scale $p^{-n}$ is at least $\vare/2$. Now using a {\em regularization} argument (see~\cite[Section 4]{Bor} or Lemma~\ref{l:RegularSubset}), one can find a subset $A'$ of $(A-A)\cap p^{n_0}\bbz_p$ such that $\log \ncal_{p^{-n}}(A')/\log(p^{n-n_0})\ge \vare/4$ for any $n\ge n_0(\vare)$. So after rescaling $A'$ one can apply the proposed bounded generation and get that $\langle A\rangle_C$ contains a $\bbz_p$-segment of size $L$ where both $C$ and $L$ depend only on the lower box dimension $\vare$. This type of {\em bounded generation} seems to be the {\em right} property to look for in various cases; that means if $A$ is a subset with lower box dimension $\vare$ of an algebraic structure $\Afr$ which is {\em rich enough}, then $A$ generates a {\em large substructure} $\Bfr$ of $\Afr$ in $C$ steps, where both the {\em largeness} of $\Bfr$ and the positive integer $C$ are supposed to depend only on $\vare$. For instance Bourgain's proof (see \cite{Bou-RingConjecture}) of the Katz-Tao discretized ring conjecture (see~\cite{KaTa-Discretized}) implies this phenomenon for the ring $\bbr$.  

To prove a bounded generation result, using in part tools from additive combinatorics (for instance see the influential article~\cite{Gow-SzemerediTheorem} and the nice book on this subject~\cite{TV-AdditiveCombinatorics}; the method of the proof of the main theorem of \cite{EdMi-ErdosVolkmann} has been used in the subsequence articles on this subject, too), one often proves the weighted version; that means one starts with the probability counting measure $\pcal_A$ on the set $A$ and then consider the push-forward of $\pcal_A\times \cdots \times \pcal_A$ under the map $f_C:\prod_{i=1}^{\Theta(C^{\Theta(1)})} \Afr \rightarrow \Afr$, where $f_C$ is given by the $C$ step generation in $\Afr$ by its algebraic operations. For instance, when $\Afr=G$ is a group, $f_C:\prod_{i=1}^{2C} G\rightarrow G$, $f_C(g_1,\ldots,g_{2C}):=g_1g_2^{-1}\cdots g_{2C-1}g_{2C}^{-1}$ and we start with a set that contains the identity element; when $\Afr=R$ is a ring, $f_C:\prod_{i=1}^{C}\prod_{j=1}^{C} R\times R \rightarrow R$, $f_C(a_{ij},b_{ij}):=\sum_{i=1}^C\prod_{j=1}^Ca_{ij}-\sum_{i=1}^C\prod_{j=1}^Cb_{ij}$ and we start with a set that contains the zero and the identity elements. In this setting, the {\em na\"{i}ve} bounded generation  implies that the support of $\mu_{A}^{[C]_{\Afr}}:=f_C(\pcal_A\times\cdots\times \pcal_A)$ is the entire $\Afr$. In the {\em weighted} version, one would like to show that $\mu_A^{[C]_{\Afr}}$ is close to the equidistribution on $\Afr$; that means $\mu_A^{[C]_{\Afr}}$ is close to the probability counting measure on $\Afr$ for some positive integer $C$ which only depends on the {\em box dimension} of $A$. To get such a result, one often imposes additional assumptions on the set $A$ and proves that $\mu_A^{[O(1)]}$ is {\em substantially more distributed} compared to $\pcal_A$ for some positive integer $O(1)$ which only depends on $\Afr$. 

There are many ways to quantify how well a probability measure $\mu$ is distributed. One way is to use {\em Fourier analysis}; for instance in the abelian setting, an upper bound on the values of the Fourier transform $\wh{\mu}$ of $\mu$ gives us a way to say how well $\mu$ is distributed. This means one needs to get some cancellations in certain {\em exponential sums} (for instance see ~\cite{Cha-PolynomialFrieman,Bor,Bou-MordellRevisited}, the appendix of \cite{BG}). In a metric (not necessarily abelian) setting, this can be interpreted as saying that one needs to get an upper bound on $\mu\ast f$ where $f$ is a function which lives in a {\em scale $\delta$} (this roughly means $f$ is almost constant in balls of radius $\delta^2$ and almost orthogonal to the characteristic functions of balls of radius $\delta^{1/2}$) (for instance see~\cite{BouGam-SU(d),BouIoaGol-LocalSpectralGap}). Another way of measuring how well a measure $\mu$ is distributed is using its {\em entropy} $H(\mu)$ (for instance see~\cite{Rud-x2x3,Joh-CircleInvariant,LinMeiPer-ConvolutionCircle}). In this note following a work of Lindenstrauss and Varj\'{u}, we use the entropy approach to get the desired bounded generation result, which will be explained in the next section. 

It should be pointed out that bounded generation results in rings, such as finite fields $\f_q$, $\bbr$, or $\bbz_p$, have been playing an indispensable role in proving bounded generation results in groups (for instance see~\cite{Hel1, Hel2, BGT, PS} for the case of finite simple groups of Lie type,~\cite{BouGam-SU(2),BouGam-SU(d), dS-Product} for the case of compact simple Lie groups, and 
~\cite{BG} for the case of $\SL_n(\bbz_p)$). Using the results of this note, first a bounded generation result for semisimple $p$-adic analytic groups is proved in \cite{SG1} and then in \cite{SG2} this result is extended to the case of perfect $p$-adic analytic groups with abelian unipotent radical. Furthermore these results are {\em uniform} on the prime $p$; in the sense that the implied constants do not depend on $p$. Using such bounded generation results, the $p$-adic case of {\em super-approximation} property is proved (we refer the interested reader to the mentioned articles for the precise formulation of these results).           

\subsection{Main results.}
One of the main results of this note is the following {\em bounded generation} result for a characteristic zero non-Archimedean local field $K$; the importance of this result is on the fact that the implied constants are {\em independent of the characteristic of the residue field of $K$}. 

\begin{thm}\label{t:ThickSegment}
Suppose $0<\vare\ll 1$, $d$ is a positive integer, and $N\gg_{d,\vare} 1$ is a positive integer. Then there are $0<\delta:=\delta(\vare,d)$, and positive integer $C:=C(\vare,d)$, such that for any finite extension $K$ of $\bbq_p$ with degree $[K:\bbq_p]\le d$ the following holds: let $\ocal$ be the ring of integers of $K$, and $\pfr$ be a uniformizing element of $K$. Suppose $A\subseteq \ocal$ such that
\[
|\pi_{\pfr^N}(A)|\ge |\pi_{\pfr}(\ocal)|^{N\vare},
\]
where $\pi_{\pfr^N}:\ocal\rightarrow \ocal/\pfr^N\ocal$ is the canonical quotient map. Then there are positive integers $N_1$ and $N_2$, $a\in \ocal$, and a subfield $K_0$ of $K$ with ring of integers $\ocal_0$ such that  
\begin{align}
\label{eq:Thickness}
\lfloor N\delta\rfloor +N_1
& \le N_2\le NC,
& 
\text{ (Scale and thickness) }
\\
\notag
\pi_{\pfr^{N_2}}(\ocal_0 a)
&
\subseteq \pi_{\pfr^{N_2}}(\gen{A}{C}), 
\hspace{1cm}
v_{\pfr}(a)=N_1,
&
\text{ (Bounded generation) }
\\
\notag
|\pi_p(\ocal_0)|
&
\ge |\pi_p(\ocal)|^{\vare/4}.
&
\text{ (Box dimension control) }
\end{align}
where $\gen{A}{C}$ is defined as in (\ref{e:ring-generating-set}).
\end{thm}
As there are many parameters in Theorem~\ref{t:ThickSegment}, the reader might find the following rough description of the parameters useful. 

Think about $\vare$ as a lower bound for the box dimension of $A$ at scale $|\pfr^N|$: recall that the smallest number of balls of radius $\delta$ which cover $A$ is denoted by $\ncal_{\delta}(A)$; and so $\ncal_{|\pfr^{N}|}=|\pi_{\pfr^N}(A)|$ and we have
\[
\log \ncal_{|\pfr^{N}|}(A)/\log (1/|\pfr^{N}|)\ge \vare.
\]
Then Theorem~\ref{t:ThickSegment} provides us a {\em large subring of integers} $\ocal_0$ and a lower bound $C$ for the {\em number of steps} needed in order to get an $\ocal_0$ segment of {\em length} $|\pfr^{N_1}|$ at the {\em scale} $|\pfr^{N_2}|$; of course the significance of this statement is on the fact that the length $|\pfr^{N_1}|$ is {\em much larger} than the scale $|\pfr^{N_2}|$. For instance the inequality in \eqref{eq:Thickness} implies that the box dimension of $\pfr^{N_1}\ocal_0$ at scale $|\pfr^{N_2}|$ is at least $\vare\delta/4C$ as one can see in the following equation:
\be\label{e:HowMuchThick}
\frac{\log \ncal_{|\pfr^{N_2}|}(\pfr^{N_1}\ocal_0)}{\log (1/|\pfr^{N_2}|)}=
\frac{\log_p |\pi_{\pfr^{N_2-N_1}}(\ocal_0)|}{N_2[\f:\f_p]}
\geq \frac{(N_2-N_1)\log_p |\pi_p(\ocal_0)|}{e(\pfr/p) N_2f(\pfr/p)}
=\frac{(N_2-N_1)\log_p |\pi_p(\ocal_0)|}{N_2\log_p(\pi_p(\ocal))}
\ge\frac{\vare\delta}{4C},
\ee 
where $e(\pfr/p)$ is the ramification index and $f(\pfr/p)$ is the residue degree of $\pfr$ in the extension $K/\bbq_p$ (see~\cite[Page 14]{Ser-LocalFields}). So $\delta$ measures the {\em thickness} of the attained $\ocal_0$-segment. 

It should be pointed out that in \cite[Proposition 3.3]{BG}, a weaker form of Theorem~\ref{t:ThickSegment} is proved where it is assumed that the extension $K/\bbq_p$ is {\em not widely ramified} and the {\em characteristic of its residue field} is a fixed prime; that means the implied constants depend on the characteristic of the residue field as well.

As in the proof of  \cite[Proposition 3.1]{BG}, using induction on the rank, one can extend Theorem~\ref{t:ThickSegment} from the rank 1 case to the rank $d_0$, where $d_0$ is a {\em fixed} positive integer. 
\begin{cor}\label{c:ThickSegment}
For any $0<\vare\ll 1$ and positive integers $d_0$ and $d$, there are $0<\delta:=\delta(\vare,d_0,d)$, and positive integer $C:=C(\vare,d_0,d)$, such that for any finite extension $K$ of $\bbq_p$ with degree $[K:\bbq_p]\le d$ the following holds: let $\ocal$ be the ring of integers of $K$, and $\pfr$ be a uniformizing element of $K$. Suppose $A\subseteq \ocal^{d_0}:=\ocal\times \cdots \times \ocal$ such that
\[
|\pi_{\pfr^N}(A)|\ge |\f|^{N\vare}.
\]
Then 
\[
\pi_{\pfr^{N_2}}(\pfr^{N_1}\bbz \xbf)\subseteq \pi_{\pfr^{N_2}}(\gen{A}{C}),
\]
for some $\xbf\in \ocal^{d_0}\setminus \pfr\ocal^{d_0}$, and integers $N_1$ and $N_2$ such that
\[
\lfloor N\delta\rfloor +N_1\le N_2\le NC. 
\]
\end{cor}
Another important corollary of Theorem~\ref{t:ThickSegment} is its {\rm global} version; this is a generalization of \cite[Corollary, Part I.1]{Bor} where the case of $k=\bbq$ (and $d_0=1$) is proved.
\begin{cor}\label{c:NumberField}
For any $0<\vare\ll 1$ and positive integers $d$ and $d_0$, there are $0<\delta:=\delta(\vare,d,d_0)$, and positive integer $C:=C(\vare,d,d_0)$, such that for any finite extension $k$ of $\bbq$ of degree at most $d$ the following holds:

Let $\ocal_k$ be the ring of integers of $k$, and $\pfr$ be a non-zero prime ideal of $\ocal_k$. Suppose $A\subseteq \ocal_k^{d_0}:=\ocal_k\times \cdots \times \ocal_k$ such that
\[
|\pi_{\pfr^N}(A)|\ge |\ocal_k/\pfr|^{N\vare}.
\]
Then 
\[
\pi_{\pfr^{N_2}}(\{i\xbf|\h i\in \bbz\cap \pfr^{N_1}\})\subseteq \pi_{\pfr^{N_2}}(\gen{A}{C}),
\]
for some $\xbf\in \ocal_k^{d_0}\setminus \pfr^d\ocal_k^{d_0}$, and integers $N_1$ and $N_2$ such that
\[
\lfloor N\delta\rfloor +N_1\le N_2\le NC. 
\]
\end{cor}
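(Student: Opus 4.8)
The plan is to deduce the global statement from the local Theorem~\ref{t:ThickSegment} applied at the completion $K = k_\pfr$, after a harmless reduction from $\ocal_k^{d_0}$ to $\ocal_k$ that mirrors the passage from Theorem~\ref{t:ThickSegment} to Corollary~\ref{c:ThickSegment}. First I would fix a finite extension $k/\bbq$ of degree at most $d$ and a non-zero prime ideal $\pfr$ of $\ocal_k$; let $e$ be the ramification index of $\pfr$ over the rational prime $p$ it lies above, so $e \le d$, and let $\f = \ocal_k/\pfr$ be the residue field, which has size at least $p$. The completion $K = k_\pfr$ is a finite extension of $\bbq_p$ with residue field $\f$ and ramification index $e \le d$, and a uniformizer $\pfr$ of $\ocal_k$ (by abuse, a generator of the maximal ideal) is also a uniformizer of $\ocal_K$; moreover $\pi_{\pfr^N}$ computed in $\ocal_k$ agrees with the one computed in $\ocal_K$, since $\ocal_k/\pfr^N \cong \ocal_K/\pfr^N$. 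Thus a set $A \subseteq \ocal_k^{d_0}$ satisfying $|\pi_{\pfr^N}(A)| \ge |\f|^{N\vare}$ is, after including $\ocal_k \hookrightarrow \ocal_K$, a valid input for Corollary~\ref{c:ThickSegment} over $K$.

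Next I would invoke Corollary~\ref{c:ThickSegment} with the parameters $(\vare, e, d_0)$ — but since $e$ ranges only over $\{1, \dots, d\}$, I can take $\delta(\vare, d, d_0) := \min_{1\le e\le d}\delta(\vare,e,d_0)$ and $C(\vare,d,d_0) := \max_{1\le e\le d} C(\vare,e,d_0)$, which depend only on $\vare, d, d_0$ as required. The large-residue-field hypothesis is likewise uniform: we need $|\f|$ large depending only on $\vare$ and $\max_e e = d$. Corollary~\ref{c:ThickSegment} then yields $\xbf \in \ocal_K^{d_0} \setminus \pfr\ocal_K^{d_0}$ and integers $N_1, N_2$ with $N\delta + N_1 \le N_2 \le NC$ such that $\pi_{\pfr^{N_2}}(\pfr^{N_1}\bbz\xbf) \subseteq \pi_{\pfr^{N_2}}(\gen{A}{C})$, where all the arithmetic on the right-hand side takes place inside $\ocal_K$; but since $A \subseteq \ocal_k^{d_0}$, the set $\gen{A}{C}$ is contained in $\ocal_k^{d_0}$ already, so the inclusion descends to $\ocal_k/\pfr^{N_2}$.

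Two small gaps remain to close. First, the conclusion asks for $\pi_{\pfr^{N_2}}(\{i\xbf \mid i \in \bbz\cap\pfr^{N_1}\})$, i.e. the scaling is by rational integers lying in $\pfr^{N_1}$ rather than by $\pfr^{N_1}$ times an arbitrary integer; but $\bbz \cap \pfr^{N_1} = p^{\lceil N_1/e\rceil}\bbz$ (in $\ocal_k$, the rational integers in $\pfr^{N_1}$ are exactly the multiples of $p^{\lceil N_1/e \rceil}$), and $p^{\lceil N_1/e\rceil}\bbz \subseteq \pfr^{N_1}\bbz$, so $\pi_{\pfr^{N_2}}(\{i\xbf \mid i\in\bbz\cap\pfr^{N_1}\}) \subseteq \pi_{\pfr^{N_2}}(\pfr^{N_1}\bbz\xbf)$ and the weaker-looking left-hand side is still covered — provided we replace $N_1$ by $e\lceil N_1/e\rceil \le N_1 + e \le N_1 + d$, which only mildly worsens the inequality $N\delta + N_1 \le N_2$; absorbing $d$ into the constants (or shrinking $\delta$) fixes this. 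Second, $\xbf \in \ocal_k^{d_0}$ must be arranged: the vector produced by Corollary~\ref{c:ThickSegment} lies in $\ocal_K^{d_0}$, but since $\gen{A}{C}\subseteq\ocal_k^{d_0}$ and $\pfr^{N_1}\bbz\xbf$ meets $\ocal_k^{d_0}$ nontrivially mod $\pfr^{N_2}$, one can replace $\xbf$ by a representative in $\ocal_k^{d_0}$ (approximate each coordinate of $\xbf$ by an element of $\ocal_k$ to precision $\pfr^{N_2}$, using density of $k$ in $k_\pfr$), and the stronger non-divisibility $\xbf \notin \pfr^d\ocal_k^{d_0}$ follows because $\xbf\notin\pfr\ocal_K^{d_0}$ forces at least one coordinate to be a unit. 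I expect the only genuinely delicate point is bookkeeping the relation $\bbz\cap\pfr^{N_1} = p^{\lceil N_1/e\rceil}\bbz$ and its effect on the range of $N_1$; everything else is a direct transfer of the local result through the completion, with the finiteness of the set $\{1,\dots,d\}$ of possible ramification indices making all the constants uniform.
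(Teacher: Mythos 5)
Your route via the completion $K=k_\pfr$ and Corollary~\ref{c:ThickSegment}, with constants made uniform by taking $\min_{1\le e\le d}\delta(\vare,e,d_0)$ and $\max_{1\le e\le d}C(\vare,e,d_0)$, matches the paper's treatment of the main case, but you have omitted the complementary case entirely. Corollary~\ref{c:ThickSegment} carries the standing hypothesis that the residue field $\f$ be large depending on $\vare$ and $e$; Corollary~\ref{c:NumberField}, on the other hand, places no restriction on $\pfr$, so it must also cover the (infinitely many) pairs $(k,\pfr)$ whose residue field falls below that threshold. These do not come for free: the paper's proof devotes its last sentence to them, noting that they give rise to only finitely many local fields $k_\pfr$ (finite extensions of $\bbq_p$ of degree $\le d$ over the finitely many small primes $p$) and invoking \cite[Proposition 3.3]{BG} for each. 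Your proposal never says what to do in that regime, so as written it does not prove the stated corollary.

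A secondary but concrete error: the inclusion $p^{\lceil N_1/e\rceil}\bbz\subseteq\pfr^{N_1}\bbz$ on which your passage from $\pfr^{N_1}\bbz\xbf$ to $\{i\xbf:\, i\in\bbz\cap\pfr^{N_1}\}$ rests is false whenever $e>1$ (take $K=\bbq_2(\sqrt2)$, $\pfr=\sqrt2$, $N_1=1$: then $2\notin\sqrt2\,\bbz$), and even for $e=1$ it needs a specific choice of uniformizer. The correct move is to change the base point rather than the scalar set: writing $p=u\pfr^e$ with $u$ a unit in the ring of integers $\ocal_K$ of $K$, and $r:=N_1-e\lfloor N_1/e\rfloor$, set $\ybf:=u^{-\lfloor N_1/e\rfloor}\pfr^r\xbf$ and $N_1':=e\lfloor N_1/e\rfloor$. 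Then $p^{\lfloor N_1/e\rfloor}\ybf=\pfr^{N_1}\xbf$ exactly, so $\{i\ybf:\, i\in\bbz\cap\pfr^{N_1'}\}=p^{\lfloor N_1/e\rfloor}\bbz\ybf=\pfr^{N_1}\bbz\xbf$, while $N_1'\le N_1$ and $\ybf\notin\pfr^d\ocal_K^{d_0}$ because $0\le r<e\le d$; your density argument then yields an $\ocal_k^{d_0}$-representative modulo $\pfr^{N_2}$ with the same non-divisibility. This repairs the bookkeeping, but the missing small-residue-field case remains the essential gap.
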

\begin{proof}
For any $\pfr$, let $k_{\pfr}$ be the completion of $k$ with respect to the $\pfr$-adic topology. Let $\ocal_{\pfr}$ be the ring of integers of $k_{\pfr}$, and $\wt{\pfr}$ be a uniformizing element of $\ocal_{\pfr}$. Then it is well-known that  $\pfr\ocal_{\pfr}=\langle \wt{\pfr} \rangle$ and the embedding of $\ocal_k$ into $\ocal_{\pfr}$ induces an isomorphism between $\pi_{\pfr^N}(\ocal_k)$ and $\pi_{\wt{\pfr}^N}(\ocal_{\pfr})$. Now we get the desired result by Corollary~\ref{c:ThickSegment}.
\end{proof}

It is not clear to the author whether the implied constants in Theorem~\ref{t:ThickSegment} should depend on the degree or not. In this note the degree is used in a crucial way to analyze subrings of $\ocal$. But the implied constants in many results proved here are independent of $K$ as it will be explained in Section~\ref{ss:outline}. Here is one such result.

\begin{thm}\label{thm-multiscaleBKT}
For any positive integer $t$, positive numbers $0<\vare_1\ll\vare_2\ll_t 1$, $0<\delta\ll_{\vare_1} 1$, any positive integer $C\gg_{\vare_1} 1$, and any finite extension $K$ of $\bbq_p$ with large, depending on $\vare_1$, residue field $\f$ the following holds: 	let $\ocal$ be the ring of integers of $K$, and $\pfr$ be a uniformizing element of $K$. Suppose $A\subseteq \pi_{\pfr^N}(\ocal)$ such that
\begin{enumerate}
\item $|\pi_{\pfr^i}(A)|\ge |\f|^{i\vare_1}$ for any $N\delta\le i\le N$.
\item $0,1\in A$ and there are $a_{1}, a_{2}\in A$ such that $a_{1}-a_{2}\in \pi_{\pfr^{N}}(\pfr\ocal\setminus \pfr^{2}\ocal)$.
\end{enumerate}
Then either 
\[
\pi_{\pfr^N}(\pfr^{\lceil\vare_2 N\rceil}\ocal)\subseteq \gen{A}{C},
\]
or 
\[
\pi_{\pfr^{\lfloor t\vare' N\rfloor}}(\gen{A}{C}\cap \pi_{\pfr^N}(\pfr^{\lceil\vare'N\rceil}\ocal))
\text{ is a ring, and } \gen{A}{C}\cap \pfr^{\lceil\vare'N\rceil}\ocal\setminus \pfr^{\lceil\vare'N\rceil+1}\ocal\neq \varnothing
\]
for some $\vare'$ in $[\vare_2^{m(\vare_1)},\vare_2]$.
\end{thm}
The subtlety in Theorem~\ref{t:ThickSegment} is that $A$ might be in a smaller field. Or even if the field generated by $A$ is the entire $K$, still in certain scales $A$ might be seen as a subring of a smaller field. Condition (b) in Theorem~\ref{thm-multiscaleBKT} guarantees that at least the ramification index of the field generated by $A$ is the same as $K$ and it can be detected even in the large scale $|\pfr|$.   
 
\begin{question}~\label{qu-field-conditions}
	Does Theorem~\ref{t:ThickSegment} hold with no restriction on $K$? If not, what is the least information needed on $K$? 
\end{question}
\subsection{A bounded generation conjecture for the quotients of ring of integers of a number field.}
As it was pointed out in Section~\ref{ss:introduction}, Theorem~\ref{t:ThickSegment} was used in \cite{SG1} to deduce a bounded generation statement for semisimple $p$-adic analytic groups, and then it was extended to perfect groups with abelian unipotent groups in \cite{SG2}; and ultimately these results were utilized to prove the $p$-adic case of super-approximation. If the following bounded generation for number fields holds, then one might be able to prove super-approximation for semisimple groups (or at least absolutely almost simple groups). 
\begin{conjecture}\label{conj-adele}
Suppose $0<\vare\ll 1$, $d$ is a positive integer, and $N_0\gg_{d,\vare} 1$ is a positive integer. Then there are $0<\delta:=\delta(\vare,d)$, and positive integer $C:=C(\vare,d)$ such that for any number field $k$ of degree at most $d$ the following holds: let $\ocal$ be the ring of integers of $k$. Suppose $\afr$ is an ideal of $\ocal$ such that $N(\afr):=|\ocal/\afr|\ge N_0$; and suppose $A\subseteq \ocal$ such that 
\[
|\pi_{\afr}(A)|\ge |\pi_{\afr}(\ocal)|^{\vare}.
\]
Then there are an ideal $\afr_2$ of $\ocal$ and $a_1\in\ocal$ such that 
\begin{align*}
\afr^C\subseteq \afr_2,& 
\h\h 
N(\afr_2:a_1)\ge N(\afr)^{\delta} 
\text{ where } (\afr_2:a_1)=\{x\in\ocal|\h xa_1\in \afr_2\}, \text{ and }
\\
\pi_{\afr_2}(\bbz a_1)& \subseteq \pi_{\afr_2}(\gen{A}{C}).  
\end{align*}	
\end{conjecture}  
    
\subsection{A detailed outline of proofs of Theorems~\ref{t:ThickSegment} and~\ref{thm-multiscaleBKT}; and some of the auxiliary results.}\label{ss:outline} 
In this section a detailed outline of the arguments is given. Here are some of my reasons to include this admittedly long overview (1) many of the statements are fairly technical; but lots of ideas in their proofs can be useful  for other problems as well. Having an overview which includes the main ideas of the auxiliary results can help readers to focus on the parts of the note that they find suitable for their purposes; (2) this can help to highlight the needed new ideas introduced here and put them in the perspective of some of the previous related works; (3) this should help the reader to get a more coherent picture of otherwise {\em locally disconnected} note.      

Before we get to the main goal of this section, we start with recalling the setting and some of the basic properties of the ring $\ocal$ of integers of a finite extension $K$ of $\bbq_p$.  

We let $\pfr$ be a uniformizing element of $\ocal$, and $\f$ be the residue field (that means $\f:=\ocal/\pfr\ocal$). For $x\in \ocal$, we let $v(x)$ be its $\pfr$-adic valuation; that means $v(x)$ is a non-negative integer such that $x\in \pfr^{v(x)}\ocal\setminus \pfr^{v(x)+1}\ocal$. For $x\in \ocal$, we let $|x|:=(1/|\f|)^{v(x)}$. It is well-known that $d(x,y):=|x-y|$ defines a metric on $\ocal$ and the ball of radius $|\pfr^n|$ centered at $0$ is $\pfr^n\ocal$. On the algebraic side, $\{\pfr^i\ocal\}_{i=0}^{\infty}$ is a filtration of $\ocal$; that means it is a family of ideals of $\ocal$ and $\pfr^i\ocal\cdot\pfr^j\ocal\subseteq \pfr^{i+i}\ocal$. A common technique to study an algebra with a filtration is making use of the associated graded algebra. In our setting this means, we define $\gr_{i,\pfr}(\ocal):=\pfr^i\ocal/\pfr^{i+1}\ocal$ for any non-negative integer $i$, and let $\gr_{\pfr}(\ocal):=\bigoplus_{i=0}^{\infty}\gr_{i,\pfr}(\ocal)$. As $\gr_{i,\pfr}(\ocal)$ are abelian groups so is $\gr_{\pfr}(\ocal)$. It is well-known that $\gr_{\pfr}(\ocal)$ is a graded algebra with respect to the following multiplication:  $(x_i+\pfr^{i+1}\ocal)(x_j+\pfr^{j+1}\ocal):=x_ix_j+\pfr^{i+j}\ocal$, for any $x_i+\pfr^{i+1}\ocal\in \gr_{i,\pfr}(\ocal)$ and $x_j+\pfr^{j+1}\ocal\in \gr_{j,\pfr}(\ocal)$. In fact, it is well-known that $\gr_{\pfr}(\ocal)$ is isomorphic to the ring of polynomials $\f[t]$ with coefficients in the residue field $\f$. 
Based on this fact and the completeness of $\ocal$, we get the following description of its elements: suppose $\Omega$ is a subset of $\ocal$ such that the quotient map $\pi_{\pfr}:\ocal\rightarrow \f$ induces a bijection from $\Omega$ to the set $\f^{\times}$ of non-zero elements of $\f$. Then for any $X\in \ocal$ there are unique $X_i\in \Omega\cup\{0\}$ such that 
\be\label{eq-p-base}
X=X_0+\pfr X_1+ \pfr^2 X_2 + \cdots . 
\ee
We call $X_i$ {\em the $i$-th $\pfr$-adic digit with respect to $\Omega$}, and sometimes denote it by $D_{i,\Omega}(X)$ or simply $D_i(X)$ (these digits {\em depend} on the choice of $\Omega$, but $\Omega$ will be fixed at the beginning of any proof). For instance when $\ocal=\bbz_p$ and $\Omega:=\{1,\ldots,p-1\}$, Equation \eqref{eq-p-base} gives us the usual $p$-base description of the $p$-adic integers. (At some point we will be working with a different set of digits, but for now a reader can think about $\ocal=\bbz_p$ and $\Omega=\{1,\dots,p-1\}$ in order to get a more concrete understanding of the setting). Let us notice that for any non-negative integer $n$ and $X,Y\in \ocal$ we have that $\pi_{\pfr^n}(X)=\pi_{\pfr^n}(Y)$ if and only if $D_i(X)=D_i(Y)$ for any $0\le i\le n-1$. So for $0\le i\le n-1$, we can and will talk about {\em the $i$-th $\pfr$-adic digit $D_i(\pi_{p^n}(X))$ with respect to $\Omega$} of an element $\pi_{\pfr^n}(X)$ of $\ocal/\pfr^{n}\ocal$. Hence for any $x\in \ocal/\pfr^n\ocal$ we have
\[
x=D_0(x)+D_1(x)\pfr+\cdots+D_{n-1}(x)\pfr^{n-1}+\pfr^{n}\ocal.
\]

{\bf Step 1.} {\em Describing subrings of the ring of integers $\ocal$.} 

Starting with a subset $A$ of $\ocal$, Theorems \ref{t:ThickSegment} and~\ref{thm-multiscaleBKT} are claiming certain bounded generation phenomena within the ring $R$ generated by $A$ in certain scales. So it is only reasonable to start with a description of subrings of $\ocal$ in a given scale. 

\begin{thm}\label{t:SubringOfO}
	Let $\ocal$ be the ring of integers of a finite extension $K$ of $\bbq_p$. Let $\pfr$ be a uniformizing element of $\ocal$, and $\f:=\ocal/\pfr\ocal$ be the residue field of $K$. Suppose $R$ is a closed subring of $\ocal$ which contains $1$. Let $C$ be an integer which is at least $3$. Suppose $F$ is an integer and $F\gg_C [K:\bbq_p]$. Then there are integers $a$ and $b$, and a subfield $K_0$ of $K$ such that 
	\be\label{eq-thickness-ring-of-integers-case}
	b-a\gg_{C,[k:\bbq_p]} F, \h\text{ and }\h F\ge b\ge Ca,
	\ee 
	\be\label{eq-subring-structure-ring-of-integers-case}
	\pi_{\pfr^b}(\ocal_0\cap \pfr^a\ocal)=\pi_{\pfr^b}(R\cap \pfr^a\ocal), 
	\text{ and } 
	\pi_{\pfr^b}(R)\subseteq \pi_{\pfr^b}(\ocal_0)
	\ee	
	where $\ocal_0$ is the ring of integers of $K_0$ and $\pi_{y}:\ocal\rightarrow \ocal/y\ocal$ is the natural quotient map for any $y\in \ocal\setminus \{0\}$.
\end{thm}
Theorem~\ref{t:SubringOfO} essentially says that, if we can only compute the first $F$-digits of the elements of the subring $R$, then we can find a {\em large} segment (proportional with $F$) of digits where $R$ is the same as ring of integers of a closed subfield. 

To prove Theorem~\ref{t:SubringOfO}, first we use the above mentioned philosophy, and prove the graded version (see Proposition~\ref{p:SubringOfPolynomialRing}). Proof of Proposition~\ref{p:SubringOfPolynomialRing} is combinatorial in nature. Along the way a result for numerical semigroups is proved that might be of independent interest (see Proposition~\ref{p:NumercialSemigroups}). 

In general going to a graded structure we might lose a lot of information about the original ring. For instance, starting with a wildly ramified Galois extension $K/\bbq_p$ there are non-trivial elements $\sigma\in {\rm Gal}(K/\bbq_p)$ such that $\sigma(a)\equiv a \pmod{\pfr}$ for any $a\in \ocal$. This means $\sigma$ induces the trivial automorphism of ${\rm gr}_{\pfr}(\ocal)$. So we can get subrings of $\ocal$ that give us the same graded subrings of ${\rm gr}_{\pfr}(\ocal)$. 

Proof of Theorem~\ref{t:SubringOfO} is a bit delicate which relies on rather well-known techniques from algebraic number theory; for instance a generalization of Hensel's lemma, Krasner's lemma, and basic facts about local fields. Along the way we get that if a closed subring $R$ of $\ocal$ have the same graded ring as ring of integers $\ocal_0$ of a closed subfield, then $R$ is the ring of integers of a closed subfield (see Proposition~\ref{prop:detecting-ring-of-integers-via-grading-structure} and Step 1 of proof of Proposition~\ref{prop-bounded-generation-for-equal-grades}).

A reader who is interested in the new techniques related to sum-product results can skip the proof of these statements. These results are used only towards the end of the note in the proof of Theorem~\ref{t:ThickSegment}. But readers should familiarize themselves with the notation and basic properties introduced in Lemma~\ref{lem-attached-graded-rings} and Corollary~\ref{cor-grade-shift}.     

{\bf Step 2.} {\em Using conditional entropy to get a Scalar-Sum expansion.}
Starting with two subsets $A$ and $B$ of $\pi_{\pfr^{N}}(\ocal)$, we would like to get a lower bound on $|A+B|$. As it was explained earlier, one often proves {\em a weighted version}: let $\pcal_A$ and $\pcal_B$ be the probability counting measures on $A$ and $B$, respectively. Then $A+B$ is the support of the additive convolution $\pcal_A\ast \pcal_B$ of $\pcal_A$ and $\pcal_B$. So if we show this new measure is {\em more distributed} than the initial measures, we should get a desired expansion on their supports. In this note, following~\cite{LV}, we use entropy to quantify how well  a measure is distributed. Let $X$ and $Y$ be random variables with respect to the distribution laws $\pcal_A$ and $\pcal_B$, respectively. Then it is well-known that 
\[
\log |A+B|\ge H(X+Y)
\]
 where $H(\bullet)$ is the (Shanon) entropy of the given random-variable (see~Definition \ref{d:Entropy} and Lemma \ref{l:PropertiesEntropy} for the definition and some of the basic properties of entropy). As it was explained above, a random variable $Z$ with values in $\pi_{\pfr^N}(\ocal)$ can be given in terms of its $\pfr$-adic digits with respect to $\Omega$. So we get random variables $D_i(Z)$ with values in $\Omega\cup\{0\}$ for any $0\le i \le N-1$; and we have 
 \be\label{e:GoingToDigits}
 H(Z)=H(D_0(Z),\ldots,D_{N-1}(Z)).
 \ee
By \eqref{e:GoingToDigits} and a basic property of conditional entropy (see Lemma~\ref{l:PropertiesEntropy}), we get
\be\label{e:GoingToConditionalMeasures}
H(Z)=H(D_0(Z))+H(D_1(Z)|D_0(Z))+\cdots+H(D_{N-1}(Z)|D_0(Z),\ldots,D_{N-2}(Z)).
\ee
Now we observe that the  {\em carry over} method for addition works in $\ocal$ as well; this means 	for $X,Y\in \ocal$, to determine the $m$-th $\pfr$-adic digit $D_m(X+Y)$ of $X+Y$, we should add the $m$-th digits $D_m(X)$ and $D_m(Y)$ of $X$ and $Y$, and add the {\em carry over} $f_{\Omega}(D_0(X),\ldots,D_{m-1}(X),D_0(Y),\ldots,D_{m-1}(Y))$ from the addition of the first $m-1$ digits. Moreover, since $\pi_{\pfr}$ induces a bijection between $\Omega\cup\{0\}$ and $\f$, to find $D_m(X+Y)$ it is necessary and sufficient to find 
\be\label{eq-m-th-digit}
\pi_{\pfr}(D_m(X))+\pi_{\pfr}(D_m(Y))+\pi_{\pfr}(f_{\Omega}(D_0(X),\ldots,D_{m-1}(X),D_0(Y),\ldots,D_{m-1}(Y))).
\ee 
In particular, the first $m$-th $\pfr$-adic digits with respect to $\Omega$ of $X+Y$ are uniquely determined by the first $m$ $\pfr$-adic digits with respect to $\Omega$ of $X$ and $Y$. Therefore for any $0\le m\le N-1$ we have
\be\label{eq-GoingToResidueField-Entropy}
H(D_m(X+Y)|D_0(X+Y),\ldots,D_{m-1}(X+Y)) \hspace{8cm}
\ee
\begin{align*}
\ge & H(D_m(X+Y)|D_0(X),\ldots,D_{m-1}(X),D_0(Y),\ldots,D_{m-1}(Y))
\\
=& H(\pi_{\pfr}(D_m(X))+\pi_{\pfr}(D_m(Y))+\pi_{\pfr}(f_{\Omega}(D_0(X),\ldots,D_{m-1}(X),D_0(Y),\ldots,D_{m-1}(Y)))
\\
&
\hspace{7.5cm}|D_0(X),\ldots,D_{m-1}(X),D_0(Y),\ldots,D_{m-1}(Y))
\\
=
&
H(\pi_{\pfr}(D_m(X))+\pi_{\pfr}(D_m(Y))|D_0(X),\ldots,D_{m-1}(X),D_0(Y),\ldots,D_{m-1}(Y)).
\end{align*}
Based on \eqref{e:GoingToConditionalMeasures} and \eqref{eq-GoingToResidueField-Entropy}, in order to get a lower bound on $H(X+Y)$, one needs to get a lower bound on $H(\overline{X}+\overline{Y})$ where $\overline{X}$ and $\overline{Y}$ are two random-variables with values in the residue field $\f$. This is the line of thought in \cite{LV} where they deal with the case of $\bbz/2^N\bbz$; and so $\f=\bbz/2\bbz$. In that case, any distribution on $\f=\bbz/2\bbz$ can be characterized by one value, say the probability of hitting 1. Based on this and using calculus of single variable functions a desired lower bound for $H(\overline{X}+\overline{Y})$ is attained in \cite{LV}. 

When the order of the residue field can be arbitrarily large, our method should have some implications for finite fields as well. In~\cite[Lemma 2.1]{BKT} in order to prove a sum-product result for finite fields, first a scalar-sum expansion is proved; to be precise it is showed that in average the size of $|\overline{A}+\overline{\alpha}\overline{B}|$ is at least $\min\{|\overline{A}||\overline{B}|/2,|\f|/10\}$, where $\overline{\alpha}$ is a random-variable with respect to the counting probability measure on $\f^{\times}$. So it seems the following question to be the right property to seek.
\begin{question}\label{qu-entropy-lower-bound}
	Let $\f$ be a finite field. Suppose $\overline{X}$, $\overline{Y}$,  $\overline{\alpha}$, and $\overline{Z}_{\f}$ are random variables with values in $\f$; $\overline{\alpha}$ is distributed with respect to the probability counting measure on the set $\f^{\times}$ of non-zero elements of $\f$, and $\overline{Z}_{\f}$ is distributed with respect to the counting probability measure on $\f$. Is there a (fixed universal) positive number $c$ such that
	\[
	H(\overline{X}+\overline{\alpha}\overline{Y}|\overline{\alpha})\ge \min\{H(\overline{X})+H(\overline{Y}),H(\overline{Z}_{\f})\}-c?
	\]
\end{question}
It is worth pointing out that we know $H(\overline{Z}_{\f})$ is $\log |\f|$ and $\overline{Z}_{\f}$ has the maximum entropy among all the random variables with values in $\f$; in particular $H(\overline{X}+\overline{\alpha}\overline{Y}|\overline{\alpha})\le H(\overline{Z}_{\f})$ (in the setting of Question~\ref{qu-entropy-lower-bound}). For now, we do not know the answer to Question~\ref{qu-entropy-lower-bound} for arbitrary random variables $\overline{X}$ and $\overline{Y}$; but Lemma~\ref{l:AverageL2norm} implies an affirmative answer to this question when $\overline{X}$ and $\overline{Y}$ are distributed according to the probability counting measures $\pcal_{\overline{A}}$ and $\pcal_{\overline{B}}$, respectively. More precisely, Proposition~\ref{p:cond-entropy-linear-comb} states 
\be\label{eq-restricted-entropy-bound-residue-field}
H(\overline{X}+\overline{\alpha}\overline{Y}|\overline{\alpha})\ge -\log\left(\frac{1}{|\overline{A}||\overline{B}|}+\frac{1}{|\f|}\right)\ge \min\{\log |\overline{A}|+\log |\overline{B}|,\log|\f|\}-\log 2
\ee
where $\overline{A}$ and $\overline{B}$ are subsets of $\f$, and $\overline{X}$ and $\overline{Y}$ are distributed according to the probability counting measures $\pcal_{\overline A}$ and $\pcal_{\overline B}$, respectively.  

In order to be able to use \eqref{eq-restricted-entropy-bound-residue-field} in the $\pfr$-adic setting via \eqref{e:GoingToConditionalMeasures} and \eqref{eq-GoingToResidueField-Entropy}, we need to start with {\em regular} subsets $A$ and $B$ of $\pi_{\pfr^{N}}(\ocal)$ (see Definition~\ref{d:RegularSubset}). Basically a subset $A$ of $\pi_{\pfr^N}(\ocal)$ is an $(m_0,\ldots,m_{N-1})$-regular subset, if for a random-variable $X$ according to the probability counting measure on $A$ and a given first $i$ $\pfr$-adic digits $w_0,\ldots,w_{i-1}$ of an element of $A$, the conditional distribution
\[
P(D_i(X)|D_0(X)=w_0,\ldots,D_{i-1}(X)=w_{i-1})
\]
is the probability counting measure on a set of cardinality $m_i$. So for an $(m_0,\ldots,m_{N-1})$-regular subset $A$, an $(l_0,\ldots,l_{N-1})$-regular subset $B$, a random-variable $X$ according to the probability counting measure $\pcal_A$, and a random-variable $Y$ according to the probability counting measure $\pcal_B$, we have
\begin{align*}
H(X+\alpha Y|\alpha)\ge& \sum_{i=0}^{N-1} H(\pi_{\pfr}(D_i(X))+\pi_{\pfr}(\alpha)\pi_{\pfr}(D_i(Y))|\alpha, D_j(X), D_j(Y) \text{ for } 0\le j\le i-1)
\\
\ge & \sum_{i=0}^{N-1} -\log\left(\frac{1}{m_il_i}+\frac{1}{|\f|}\right),
\end{align*}
where $\alpha$ is a random-variable according to the probability counting measure on $\pi_{\pfr^N}(\Omega)$. And this implies 
\be\label{eq-scalar-sum-expansion}
\max_{w\in \Omega} |A+\pi_{\pfr^{N}}(w)B|\ge \prod_{i=0}^{N-1}\left(\frac{1}{m_il_i}+\frac{1}{|\f|}\right)^{-1},
\ee
which is our desired Scalar-Sum expansion (see Proposition~\ref{p:ScalarSumRegular}). Roughly this inequality says that, if we do not get a meaningful Scalar-Sum expansion, the reason is that at any level $i$ either both $\log m_i$ and $\log l_i$ are close to $\log |\f|$ or both $\log m_i$ and $\log l_i$ are close to $0$.  

{\bf Step 3.} {\em Following Lindenstrauss-Varj\'{u}'s treatment to get a Scalar-Sum-Product expansion for a regular set.} In this step, we prove that for some $a\in A-A$ and $w\in \Omega$ the set $A+\pi_{\pfr^N}(w)aA$ is significantly larger than $A$ where $A$ is  a {\em regular} subset of $\pi_{\pfr^N}(\ocal)$ with three other conditions. The key observation behind this step is the fact that, if $A$ is an $(m_0,\ldots,m_{N-1})$-regular subset of $\pi_{\pfr^{N}}(\ocal)$, then, for any $x\in \ocal$, $\pi_{\pfr^{N}}(x)A$ is a $(1,\ldots,1,m_0,\ldots,m_{N-1-v(x)})$-regular subset where $v(x)$ is the $\pfr$-adic valuation of $x$. This implies that, for any $i\in B:=\{j\in [0,N-1]|\h m_j>1\}$, there is $a\in A-A$ such that $aA$ is a $(1,\ldots,1,m_0,\ldots,m_{N-1-i})$-regular subset. Let $T:=\{j\in [0,N-1]| \log m_j/\log|\f| \ge 1/2\}$. Now applying the Scalar-Sum expansion proved in the first step (see the inequality in \eqref{eq-scalar-sum-expansion}) for the regular sets $A$ and $aA$ for any $a\in A-A$, we get that either  $|A+\pi_{\pfr^N}(w)aA|$ is significantly larger than $|A|$ for some $w\in \Omega$ and $a\in A-A$, or $T$ is almost invariant under the shifts by elements of $B$. Then assuming that $A$ has at least box dimension $\vare$ for any scale smaller than $(1/|\f|)^{O(\vare^4 N)}$ and $0,1\in B$, we deduce that a shift of $B$ has Schnirlmann density (see Definition~\ref{d:Density}) at least $\vare$; and then by a theorem of Mann (see Theorem~\ref{t:Mann}) we reach to a contradiction.   

	Let us emphasis that the key point of the argument is where we say {\em the set $T$ of indexes where $m_i$ is at least $\sqrt{|\f|}$ is almost invariant under shifts by integers $j$ such that $m_j>1$} (see Lemma~\ref{l:BT}); and we deduce this claim using the inequality in \eqref{eq-scalar-sum-expansion} for the regular sets $A$ and $aA$ for suitable $a\in A-A$. 
	
	Another remark is that the crucial condition $m_1>1$ is why we get a result with no dependence on $K$.
		
{\bf Step 4.} {\em Proving a Scalar-Sum-Product expansion: removing the regularity assumption.} Finally at this step we get a satisfactory expansion result:
\begin{thm}[Scalar-Sum-Product expansion]\label{t:ScalarSumExpansion}
	For any $\vare>0$, $0<\delta\ll \vare^5$, and any finite extension $K$ of $\bbq_p$ with large, depending on $\vare$, residue field $\f$ the following holds:
	
	Let $\ocal$ be the ring of integers of $K$, and $\pfr$ be a uniformizing element of $K$. Let $\Omega\subseteq \ocal$, and suppose $\pi_{\pfr}$ induces a bijection between $\Omega\subseteq \ocal$ and $\f^{\times}$. Suppose $A\subseteq \pi_{\pfr^N}(\ocal)$ such that
	\begin{enumerate}
		\item $|A|\le |\f|^{N(1-\vare)}$,
		\item $|\pi_{\pfr^i}(A)|\ge |\f|^{i\vare}$ for any $N\delta\le i\le N$.
		\item there are $a_{01}, a_{02}, a_{11}, a_{12}\in A$ such that $a_{01}-a_{02}\in \pi_{\pfr^{N}}(\ocal\setminus \pfr\ocal), a_{11}-a_{12}\in \pi_{\pfr^{N}}(\pfr\ocal\setminus \pfr^{2}\ocal)$.
	\end{enumerate}
	Then
	\[
	\max_{\omega\in \Omega}|\gen{A}{6} +\pi_{\pfr^N}(\omega)\gen{A}{6}|\ge |A| |\f|^{N\delta}.
	\]
\end{thm}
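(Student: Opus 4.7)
The plan is to argue by contradiction. Suppose $|\gen{A}{6}+\pi_{\pfr^N}(\omega)\gen{A}{6}|<|A|\cdot|\f|^{N\delta}$ for every $\omega\in\Omega$, set $B:=\gen{A}{6}$, and write $K:=|\f|^{N\delta}$. The set $B$ is symmetric and contains $0$; using condition~3 (which supplies a unit among the elements of $A$, since $a_{01}-a_{02}$ is a unit), one checks $|B|\ge|A|$, so the hypothesis rewrites as $|B+\omega B|\le K|B|$ for every $\omega\in\Omega$, and in particular $|B|\le|B+\omega B|<K|A|$. Since each $\omega\in\Omega$ is a unit in $\ocal/\pfr^N$, an iterated Pl\"unnecke--Ruzsa argument gives
\[
\bigl|\omega_1 B+\omega_2 B+\cdots+\omega_k B\bigr|\le K^{c_1(k)}|B|
\]
with $c_1(k)=O(k)$, for any $\omega_i\in\Omega$; a further Ruzsa-triangle step extends this to $\omega_i$ ranging in short $\bbz$-linear combinations of $\Omega$.

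The second step packages this additive control into a dilation stabilizer
\[
\Sigma:=\bigl\{\,t\in\ocal/\pfr^N\,:\,|B+tB|\le K^{c_0}|B|\,\bigr\},
\]
for a suitable constant $c_0$. The iterated estimate places $\pi_{\pfr^N}(\gen{\Omega}{k})$ inside $\Sigma$ for $k=k(c_0)$, and a short Ruzsa calculation shows that $\Sigma$ is approximately closed under addition and, among its units, under multiplication and inversion (the latter uses $|B+t^{-1}B|=|B+tB|$). Condition~3 supplies $u:=a_{01}-a_{02}\in B$ (a unit) and $v:=a_{11}-a_{12}\in B$ (of exact $\pfr$-adic valuation~$1$). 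Since $\pi_\pfr(\Omega)=\f^\times$, short integer combinations of $\Omega$ represent every class modulo~$\pfr$, so $\pi_\pfr(\Sigma^\times)=\f^\times$. The valuation-one element~$v$, combined with the approximate subring structure of $\Sigma$, then drives a digit-by-digit $\pfr$-adic lift that populates $\Sigma$ one $\pfr$-digit at a time. After $O(1/\delta)$ such promotions, $\Sigma^\times:=\Sigma\cap(\ocal/\pfr^N)^\times$ contains essentially all of $(\ocal/\pfr^N)^\times$.

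To close the contradiction, stability of $B$ under multiplication by essentially every unit forces $B$ to be an approximate union of orbits of the $(\ocal/\pfr^N)^\times$-action on $\ocal/\pfr^N$, that is, a union of valuation layers. Since $B$ contains the unit $u$, it approximately contains the full unit layer of size $\sim|\f|^N$, forcing $|B|\ge|\f|^{N-O(N\delta)}$. But $|B|<K|A|\le|\f|^{N(1-\vare)+N\delta}$ by hypothesis~1 combined with the trivial bound above, and for $\delta\ll\vare^5$ these two estimates are incompatible, producing the contradiction. The hard part will be the digit lift in the second step: converting the valuation-one element $v\in B$ into genuine $\pfr$-adic propagation of~$\Sigma$, while keeping the Pl\"unnecke loss $K^{O(k)}$ under control, requires a careful induction on $\pfr$-adic depth in which each layer is populated using an approximate-subring/Ruzsa-triangle calculation that reuses the hypothesis one more time. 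Ensuring that all implicit constants depend only on~$\vare$---not on $|\f|$---is the essential uniformity requirement, and it is what forces the scaling $\delta\ll\vare^5$.
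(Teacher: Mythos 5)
Your approach is genuinely different from the paper's, and it has a serious gap that you yourself flag as ``the hard part.''

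The paper does \emph{not} build an approximate subring. It first regularizes $A$ (Lemma~\ref{l:RegularSubset}) to produce a regular subset with branching numbers $m_0,\dots,m_{N-1}$, uses condition~3 to force $m_0,m_1>1$ by adding at most two translates, and then proves the heart of the matter (Proposition~\ref{p:ScalarSumExpansionRegular}) by the Lindenstrauss--Varj\'u entropy method: it decomposes $H(\pcal_A\ast\omega\pcal_B)$ as a sum of conditional entropies $H(\cdot;\bcal_{k+1}\mid\bcal_k)$ over the levels of the $\pfr$-adic tree, averages over $\omega\in\f^\times$ (Lemma~\ref{l:AverageL2norm}), and converts the resulting lower bound into a purely combinatorial statement about the sequence $x_i=\log m_i/\log|\f|$. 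The contradiction then comes from Mann's theorem applied to the Schnirelmann density of the branching support, via the sets $B$, $T$ and the subadditive deficiency function $D_T$. Crucially, the argument yields a clean per-level gain that is summed once, not iterated.

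The gap in your proposal is precisely where you anticipate it: the ``digit-by-digit lift.'' In your scheme each promotion step costs a Pl\"unnecke factor $K^{O(1)}=|\f|^{O(N\delta)}$, and to propagate $\Sigma$ through all $\pfr$-adic digits you need on the order of $N$ steps (not $O(1/\delta)$; the number of digits is $N$, which is unbounded). The accumulated loss is then $|\f|^{O(N^2\delta)}$, which for fixed $\delta$ and $N\to\infty$ destroys the conclusion $|B|\ge|\f|^{N-O(N\delta)}$. A BKT-style approximate-ring argument works over a finite field because there is only one ``level,'' and Bourgain's treatment of $\bbz_q$ handles the depth with a delicate multiscale iteration, not a naive layer-by-layer lift. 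As written, your step from ``$\Sigma$ is an approximate subring containing a valuation-one element'' to ``$\Sigma^\times$ is essentially all of $(\ocal/\pfr^N)^\times$'' is not justified and would need an entirely different mechanism (such as the entropy telescoping the paper uses) to avoid the compounding losses. A secondary, smaller issue: condition~3 only provides difference elements of valuation $0$ and $1$; you implicitly need controlled branching at every scale $N\delta\le i\le N$, which in the paper's proof is exactly what the regularization plus hypothesis~2 deliver and what the entropy decomposition exploits level by level.
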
  
Condition (a) allows us to have enough space to expand. Condition (b) says that the box dimension of the lift $\pi_{\pfr^N}^{-1}(A)\subseteq \ocal$ of $A$ at any scale smaller than $(1/|\f|)^{N\delta}$ is at least $\vare$. This is a rather (needed) technical assumption which will be eventually removed; but removing this condition results to having a weaker conclusion, which has a meaning only in $\ocal$ and not in $\pi_{\pfr^N}(\ocal)$. Condition (c) tells us something about the valuation of elements of $A-A$: it is equivalent to say that there are $a_0,a_1\in A-A$ such that $v(a_0)=0$ and $v(a_1)=1$. As before this crucial condition helps us get a result that works with no dependency on the field $K$. 

The key idea is a {\em regularization process} that has been used in most of the previous works related to either a sum-product or a product result in a {\em multi-scaled space} (for instance see \cite{BG, Bou-RingConjecture, Bor} or \cite[Section 2.2]{SG1}). In this process, we construct a rooted regular tree with $N$ levels; the vertices in the $i$-th row are elements of 
	$\pi_{\pfr^i}(\ocal)$, and the parent of $\pi_{\pfr^i}(x)$ is $\pi_{\pfr^{i-1}}(x)$ for $1\le i\le N$. We view $A$ as a subset of the vertices at the $N$-th level, and consider the rooted sub-tree induced by $A$. Through this process, each time we choose a subset $A_i$ of $A$ such that first $|A_i|\ge |A|/(\log |\f|)^i$ and second the last $i$-th levels of the rooted sub-tree induced by $A_i$ are {\em regular}; that means there are equal number of paths from any vertex at the $(N-i)$-th level to the $N$-th level. After obtaining this regular {\em large} subset and changing it a little bit, we apply the Scalar-Sum-Product expansion for regular sets and deduce the desired result. 

{\bf Step 5.} {\em Proving a bounded generation result.} At this step we prove:
\begin{thm}\label{t:BoundedGenerationScalarSum}
	For any $0<\vare_1\ll\vare_2\ll 1$, a positive integer $m$, $0<\delta\ll_{m,\vare_1} 1$, positive integers $1 \ll_{m,\vare_1} C$ (number of needed sum-product) and $1 \ll_{\vare_1} k$ (number of needed scalars) and any finite extension $K$ of $\bbq_p$ with large, depending on $\vare_1$, residue field $\f$ the following holds:
	
	Let $\ocal$ be the ring of integers of $K$, and $\pfr$ be a uniformizing element of $K$. Let $\Omega\subseteq \ocal$, and suppose $\pi_{\pfr}$ induces a bijection between $\Omega\subseteq \ocal$ and $\f^{\times}$. Suppose $A\subseteq \pi_{\pfr^N}(\ocal)$ such that
	\begin{enumerate}
		\item $|\pi_{\pfr^i}(A)|\ge |\f|^{i\vare_1}$ for any $N\delta\le i\le N$.
		\item there are $a_{01}, a_{02}, a_{11}, a_{12}\in A$ such that $a_{i1}-a_{i2}\in \pi_{\pfr^{N}}(\pfr^i\ocal\setminus \pfr^{i+1}\ocal)$.
	\end{enumerate}
	Then
	\be\label{e:BGScalarSumProduct}
	\pi_{\pfr^N}(\pfr^{\lceil\vare_2^m N\rceil}\ocal)\subseteq \gen{A}{C}+\pi_{\pfr^N}(\omega_1)\gen{A}{C}+\cdots+\pi_{\pfr^N}(\omega_k)\gen{A}{C},
	\ee
	for some $\omega_i\in \prod_k(\Omega\cup\{1\})$.
\end{thm}
The conditions (a) and (b) are similar to the technical conditions (b) and (c) of Theorem~\ref{t:ScalarSumExpansion}. As here we are seeking a {\em bounded generation} result and not an {\em expansion} result, no upper bound on $|A|$ is needed (see condition (a) of Theorem~\ref{t:ScalarSumExpansion}). 

A quick explanation of the parameters involved in Theorem~\ref{t:BoundedGenerationScalarSum}: $\vare_1$ gives us a lower bound for the box dimension of $\pi_{\pfr^N}^{-1}(A)$ at scales smaller than $(1/|\f|)^{N\delta}$; so a smaller $\delta$ imposes more conditions on $A$; the {\em thickness} of the $\ocal$-segment generated in $C$ steps sum-product and $k$ steps scalar-sum is roughly $(1/|\f|)^{\vare_2^mN}$; so for a smaller $\vare_2$ and a larger $m$ we get a thicker $\ocal$-segment. An important point to raise is that the number $k$ of the needed scalar-sum steps is {\em independent} of $m$. This is crucial when we want to get a bounded generation result using only sum and product. 

To prove Theorem~\ref{t:BoundedGenerationScalarSum}, first we show the case of $m=1$ (see Proposition~\ref{p:BoundedGenerationScalarSum}). To show this case, we use the Scalar-Sum-Product expansion result, Theorem~\ref{t:ScalarSumExpansion}, repeatedly to get a subset of $\pi_{\pfr}(\ocal)$ with arbitrarily large {\em box dimension}; that means to get a subset $A'$ such that $|A'|\ge |\pi_{\pfr}(\ocal)|^{1-\delta}$ for a small fixed positive number $\delta$. Then we get the desired bounded generation result using Fourier analysis. This is a common feature of most of the proofs on this type of results; for instance this part of a bounded generation result in groups is usually done by proving a kind of {\em mixing property} (see Sarnak-Xue~\cite{SaXu} and Gower's notion of quasi-randomness~\cite{Gow-quasirandom}). 

In the second step, we appeal to the associated graded algebra ${\rm gr}_{\pfr}(\ocal):=\bigoplus_{i=0}^{\infty} \pfr^i\ocal/\pfr^{i+1}\ocal$ (as we have pointed out earlier this algebra is isomorphic to $\f[t]$) in order to gain more information on $\gen{A}{C}$ for some integer $C:=C(\vare)$. To be more precise, to any subset $\wt{A}$ of $\ocal$ and any non-negative integer $i$, we associate {\em the $i$-th grade} ${\rm gr}_{i,\pfr}(\wt{A};\ocal):=((\wt{A}\cap \pfr^i\ocal)+\pfr^{i+1}\ocal)/\pfr^{i+1}\ocal$ of $\wt{A}$ (and of course this can be done for any subset $A$ of $\pi_{\pfr^N}(\ocal)$ as well). And for $A\subseteq \pi_{\pfr^N}(\ocal)$ we let $J(A):=\{i\in [0,N-1]|\h {\rm gr}_i(A)\neq 0\}$.
	Notice that 
	\[
	J(A)=\{v(x)|\h \pi_{\pfr^N}(x)\in A\}\cap [0,N-1]
	\]
	 where $v$ 
	the is $\pfr$-adic valuation, and condition (b) of Theorem~\ref{t:BoundedGenerationScalarSum} is equivalent to saying $0,1$ are in $J(A-A)$. Let us also observe that the graded algebra structure of ${\rm gr}_{\pfr}(\ocal)$ implies $J(A_1A_2)\supseteq J(A_1)+J(A_2)$ for any two subsets $A_1$ and $A_2$ of $\ocal$. Having these in mind, using Lemma~\ref{l:LargeInterval} (which is based on the Mann theorem on sets with positive Schnirlmann density) we deduce that $J(\gen{A}{3\lceil 1/\vare_1 \rceil}) \supseteq (\lceil 1/\vare_1 \rceil\delta N, N)\cap \bbz$ if $A$ satisfies properties (a) and (b) of Theorem~\ref{t:BoundedGenerationScalarSum}.  

Finally to prove Theorem~\ref{t:BoundedGenerationScalarSum}, we use the $m=1$ case for the parameters 
$\vare_1^{\rm new}:=\vare_1$, 
$\vare_2^{\rm new}:=\vare_2$, 
$N^{\rm new}:=\lfloor \vare_2^{m-1}N\rfloor$, and 
$A^{\rm new}:= \pi_{\pfr^{N^{\rm new}}}(A)$, in order to get a scalar-sum-product set $\wh{A}$ such that $\pi_{\pfr^{N^{\rm new}}}(\wh{A})$ contains 
$\pi_{\pfr^{\lfloor \vare_2^{m-1}N\rfloor}}(\pfr^{\lceil \vare_2^{m}N\rceil}\ocal)$. Now using the sum-product set 
$\gen{A}{3\lceil 1/\vare_1 \rceil}$ we shift $\wh{A}$ at most $1/\vare_2$ many times and add them in order to fill out the entire $\ocal$-segment 
$\pi_{\pfr^N}(\pfr^{\lceil\vare_2^m\rceil}\ocal)$ {\em without introducing a new scaling parameter}. This is crucial as later we need to get rid of the used scalars; and each time we reduce the number of used scalars, it comes with a {\em cost} on the thickness of $\ocal$-segment. So we need to start with a {\em thick} enough $\ocal$-segment at the beginning of the process. 

{\bf Step 6.} {\em A multi-scaled version of the Bourgain-Katz-Tao argument and proof of Theorem~\ref{thm-multiscaleBKT}}  (see~\cite[Proofs of Lemma 4.2 and Theorem 4.3]{BKT}). Roughly the following steps were employed in \cite{BKT} to prove a bounded generation result in a finite field $\f$ (see \cite{EdMi-ErdosVolkmann} where a similar approach is used to prove Erd\"{o}s-Volkmann's ring conjecture): 
	\begin{enumerate}
	\item (Scalar-Sum bounded generation) There is a linear function $l:\f^k\rightarrow \f, l(x_1,\ldots,x_k):=\sum_{i=1}^k \alpha_i x_i$ such that $l(A^k)=\f$ (here the number $k$ of the needed scalars depends on $\log |A|/\log |\f|$);
	\item (Reducing the number of involved scalars) We have either {\bf (injectivity)} $l$ is injective on $A\times \cdots \times A$ or {\bf (reduction)} there is a linear function $l':\f^{k-1}\rightarrow \f$ with one less variable such that $l'(\gen{A}{2}^{k-1})=\f$;
	\item (Analyzing the injectivity case) If a linear function $l:\f^k\rightarrow \f$ is injective on $A^k$ and $l(A^k)=\f$, then $A$ is a subfield of $\f$.
	\end{enumerate}
(Here $A$ is a subset of $\f$ with cardinality at least $|\f|^{\vare}$ and $0,1\in A$. See Lemma~\ref{l:BoundedGenerationFiniteField} for the details and the precise statement.) For the purposes of this note, we need a {\em multi-scaled} version of these steps. That means we will be needing the injectivity of a linear map on a {\em large neighborhood} of a given set in order to be able to deduce existence of some algebraic structure on it. In the algebraic language the difficulty arises as we have lots of nilpotent elements in $\pi_{\pfr^N}(\ocal)$, but in a field $\f$ any non-zero element is invertible. Here we briefly explain how we overcome this difficulty. 

{\em Choice of the set of digits $\Omega\cup\{0\}$.} So far we have been working with an arbitrary set $\Omega$ of representatives in $\ocal$ of the non-zero elements of the residue field; and the needed scalars for the scalar-sum-product bounded generation were picked from $\prod_C \Omega\cup\{1\}$. At this step in order to have a slightly neater version of the process, we assume that $\Omega$ is a subgroup of the group of units of $\ocal$; using the Hensel lemma we know that there is a subgroup $\Omega$ of the group of units of $\ocal$ such that $\pi_{\pfr}:\Omega\rightarrow \f^{\times}$ is a group isomorphism. So we can and will assume $\prod_C \Omega\cup\{1\}=\Omega$ for any positive integer $C$, and more importantly the inverse of an element of $\Omega$ is again in $\Omega$. 

{\em The main dichotomy.} To explain this part, we introduce the symbol ${\rm BG}(A;\vare, k,C)$; for a subset $A$ of $\pi_{\pfr^N}(\ocal)$, a positive number $\vare$, and positive integers $k, C$, we say ${\rm BG}(A;\vare, k,C)$ holds if  
\be\label{eq-bounded-generation}
\pi_{\pfr^N}(\pfr^{\lceil\vare N\rceil}\ocal)\subseteq \gen{A}{C}+\pi_{\pfr^N}(\alpha_1)\gen{A}{C}+\cdots+\pi_{\pfr^N}(\alpha_k)\gen{A}{C}
\ee
for some $\alpha_1,\ldots,\alpha_k\in \Omega$. So roughly ${\rm BG}(A;\vare, k,C)$ is a compact way of saying that an $\ocal$-segment, whose box dimension at scale $|\pfr^N|$ is at least $1-\vare$, can be generated by $A$ in $C$ steps sum-product, and $k+1$ steps scalar-sum with scalars in $\Omega$. Let us observe that for $0<\vare_1\ll \vare_2\ll 1$ and a positive integer $m$ (under certain conditions on the set $A$), by Theorem~\ref{t:BoundedGenerationScalarSum}, 
${\rm BG}(A;\vare_2^m,k(\vare_1),C(\vare_1,m))$ holds. This will be serving us as the {\em initial seed} of a process similar to the explained Bourgain-Katz-Tao argument.
 
Assuming ${\rm BG}(A;\vare,k,C)$ holds, for any $\delta_0>0$, we prove (see Claim 1 in the proof of Lemma~\ref{lem-multiscale-BKT}) that either
\begin{enumerate}
\item {\bf ($\delta_0$-injectivity)} for any $\xbf,\xbf'\in \gen{A}{2C}^{k+1}$,  $l(\xbf)=l(\xbf_0)$ implies $\xbf-\xbf'\in \pfr_{\pfr^N}(\pfr^{\lfloor \delta_0 N\rfloor})$, where $l(x_0,\ldots,x_k)=x_0+\pi_{\pfr^N}(\alpha_1) x_1+\cdots+\pi_{\pfr^N}(\alpha_k) x_k$ and $\alpha_i$'s satisfy (\ref{eq-bounded-generation}), or
\item {\bf (Reduction)} 	${\rm BG}(A;\vare+\delta_0,k-1,8C)$ holds.
\end{enumerate}
	
{\em Analyzing the case where the reduction fails.} Suppose $\delta_0>\vare$, ${\rm BG}(A;\vare,k,C)$ holds for the sequence $\alpha_1,\ldots,\alpha_k$ of scales in $\Omega$, and we have the $\delta_0$-injectivity for $\gen{A}{2C}$ and $l(x_0,\ldots,x_k):=x_0+\pi_{\pfr^N}(\alpha_1)x_1+\cdots+\pi_{\pfr^N}(\alpha_k)x_k$; then we prove (see Lemma~\ref{lem-multiscale-BKT}) that
\be\label{eq-closed-under-addition}
\pi_{\pfr^{\lfloor \delta_0N\rfloor}}\left(\gen{A}{C}\cap \pi_{\pfr^N}(\pfr^{\lceil\vare N\rceil}\ocal) \right)
\ee
is a subring of $\pi_{\pfr^{\lfloor \delta_0N\rfloor}}(\ocal)$.

{\em Gaining an algebraic structure without using scalars.} We use the conclusion of Theorem~\ref{t:BoundedGenerationScalarSum} as the {\em initial seed} for using the main dichotomy. That means ${\rm BG}(A;\vare_2^m,k(\vare_1),C(\vare_1,m))$ holds for $0<\vare_1\ll \vare_2\ll 1$ and a positive integer $m$ (under certain assumptions on $A$).  Then using the main dichotomy we reduce the number of needed scalars. But each time the main dichotomy is used, assuming the reduction occurs, the number of needed scalars is reduced in the cost of getting a {\em smaller} $\ocal$-segment. So the next time we should pick a larger scale $\delta_0$ for using the main dichotomy. This shows how important it is to know that the number $k$ of needed scalars only depends on $\vare_1$ and it is {\em independent} of $m$.  Now by choosing $m$ large enough depending only on $\vare_1$ and choosing the scalars carefully, we get (see the proof of Lemma~\ref{lem-multiscale-BKT}) that 
\[
\pi_{\pfr^{\lfloor t\vare' N\rfloor}}\left(\gen{A}{C'}\cap \pi_{\pfr^N}(\pfr^{\lceil\vare' N\rceil}\ocal) \right)
\]
is a ring for some $\vare_2^m\le \vare'\le \vare_2$ and $C':=C'(\vare_1)$ (where $t$ is a given fixed integer).

{\em Controlling the gap of indexes of non-zero grades, and finishing the proof of Theorem~\ref{thm-multiscaleBKT}.} Another application of the associated graded algebra and Mann's theorem as in Step 4 helps us finish the proof of Theorem~\ref{thm-multiscaleBKT}.

{\bf Step 7.} {\em Proof of Theorem~\ref{t:ThickSegment}.} The main shortcoming of Theorem~\ref{thm-multiscaleBKT} is on  the assumption that there is an element $a\in A-A$ whose $\pfr$-adic valuation is 1. In fact, the set $A$ might be in a smaller field, or the ring $R$ generated by $A$ might behave as the ring of integers of different subfields in different scales. 

Having a description of closed subrings of $\ocal$ in hand, using a corollary of Theorem~\ref{t:BoundedGenerationScalarSum}, we follow scheme of Bourgain's proof in~\cite[Section A.3]{BG}. It is worth repeating that Bourgain had assumed $p$ is a fixed prime and $K/\bbq_p$ is not widely ramified; and both of these assumptions were utilized for understanding structure of certain subrings of $\ocal$.  

{\em Assuming the ring $R$ generated by $A$ is $\ocal$.} Our starting point is where there is no complication on the ring $R$. The following is an immediate corollary of Theorem~\ref{t:BoundedGenerationScalarSum}, which is a uniform version (in the sense that the implied constants do not depend on the field $K$) of \cite[Corollary A.1]{BG}.
\begin{cor}\label{c:BoundedGenerationFullModP}
	For any $0<\vare_1\ll \vare_2\ll 1$, $0<\delta\ll_{\vare_1}1$, and positive integer $1\ll_{\vare_1} C$, and any finite extension $K$ of $\bbq_p$ with large, depending on $\vare_1$, residue field $\f$ the following holds:
	
	Let $\ocal$ be the ring of integers of $K$, and $\pfr$ be a uniformizing element of $K$. Suppose $A\subseteq \pi_{\pfr^N}(\ocal)$ such that
	\begin{enumerate}
		\item $|\pi_{\pfr^i}(A)|\ge |\f|^{i\vare_1}$ for any $N\delta\le i\le N$.
		\item $\pi_{\pfr^{e'}}(A)=\pi_{\pfr^{e'}}(\ocal)$, where $e'=1$ if $K$ is an unramified extension, and $e'=2$ otherwise. 
	\end{enumerate}
	Then 
	\[
	\textstyle
	\pi_{\pfr^N}(\pfr^{\lceil\vare_2 N\rceil}\ocal)\subseteq \sum_{C_1}\prod_{C_2}A-\sum_{C_1}\prod_{C_2}A,
	\]
	for some integers $C_1,C_2\le C$. 
\end{cor}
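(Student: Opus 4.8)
The plan is to deduce the corollary from Theorem~\ref{t:BoundedGenerationScalarSum} applied with $m=1$, so that the conclusion of that theorem, $\pi_{\pfr^N}(\pfr^{\lceil\vare_2 N\rceil}\ocal)\subseteq\gen{A'}{C}+\sum_{i=1}^{k}\pi_{\pfr^N}(\omega_i)\gen{A'}{C}$ with $\omega_i\in\prod_k(\Omega\cup\{1\})$, already has the shape we want; the three points to settle are (i) to feed the theorem a set $A'$ obeying its hypothesis~(2), (ii) to choose the scalar set $\Omega$ so that $\pi_{\pfr^N}(\Omega)$ lies inside a bounded sum-product of $A$, and (iii) to absorb the scalars $\pi_{\pfr^N}(\omega_i)$ into the sum-product. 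Hypothesis~(1) of the theorem is identical to the one here and is inherited by any set containing a translate of $A$, and the parameter constraints ($0<\vare_1\ll\vare_2\ll1$, $\delta\ll_{\vare_1}1$, $C\ll_{\vare_1}1$, number of scalars $k\ll_{\vare_1}1$) are exactly the ones available in the corollary once $m$ is fixed. In the ramified case $e'=2$, so $\pi_{\pfr^2}(A)=\ocal/\pfr^2$; taking preimages in $A$ of $1,\pfr,0\in\ocal/\pfr^2$ yields the four elements needed for hypothesis~(2) applied to $A$ itself, and since $\pi_\pfr(A)=\f$ one can pick $\Omega\subseteq\ocal$ lifting $\f^\times$ with $\pi_{\pfr^N}(\Omega)\subseteq A$, so the theorem applies to $A'=A$ directly.

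The only genuine work is the unramified case $e'=1$, where one only knows $\pi_\pfr(A)=\f$: this still forces two elements of $A$ with unit difference, but need not force a difference of valuation exactly $1$. To manufacture such a difference inside $\gen{A}{2}$, fix for each $\alpha\in\f$ an element $a_\alpha\in A$ with $\pi_\pfr(a_\alpha)=\alpha$, and note that for all $\alpha,\beta,\gamma\in\f$
\[
a_1\bigl(a_\alpha+a_\beta-a_\gamma-a_{\alpha+\beta-\gamma}\bigr)=(a_1a_\alpha+a_1a_\beta)-(a_1a_\gamma+a_1a_{\alpha+\beta-\gamma})\in\gen{A}{2}
\]
reduces to $0$ modulo $\pfr$, hence lies in $\pfr\ocal$, and — $a_1$ being a unit — has valuation exactly $1$ unless $a_\alpha+a_\beta-a_\gamma-a_{\alpha+\beta-\gamma}\in\pfr^2\ocal$. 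If the latter held for every $\alpha,\beta,\gamma$ then, specializing $\gamma=0$, the assignment $b(\alpha):=\pi_{\pfr^2}(a_\alpha-a_0)$ would be a group homomorphism $(\f,+)\to(\ocal/\pfr^2,+)$ with $b(\alpha)\equiv\alpha\pmod\pfr$; applying it to the relation $p\cdot1=0$ in $\f$ gives $p\,b(1)=b(0)=0$ in $\ocal/\pfr^2$, and as $b(1)$ is a unit this forces $p\in\pfr^2\ocal$, i.e.\ $v_\pfr(p)\ge2$, contradicting the unramifiedness of $K/\bbq_p$. Hence $\gen{A}{2}$ contains an element $w$ with $v_\pfr(w)=1$.

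To close the unramified case I would take $A':=(A-a_0)\cup\{w\}$ for a fixed $a_0\in A$ with $\pi_\pfr(a_0)=0$: then $A'\subseteq\pi_{\pfr^N}(\ocal)$, it contains $0$ and the translate $A-a_0$ (so hypothesis~(1) holds for $A'$ with the same $\vare_1$, and $\pi_\pfr(A')=\f$ supplies an $\Omega$ with $\pi_{\pfr^N}(\Omega)\subseteq A-a_0$), and it satisfies hypothesis~(2) via $a_{01}=a-a_0$ with $\pi_\pfr(a)=1$, $a_{02}=0$, $a_{11}=w$, $a_{12}=0$; moreover each element of $A'$ is a signed sum of boundedly many products of boundedly many elements of $A$. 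Applying Theorem~\ref{t:BoundedGenerationScalarSum} to this $A'$ and then replacing $A'$, $w$ and $\Omega$ by their descriptions in terms of $A$ and expanding, the right-hand side of its conclusion lies in $\sum_{C_1}\prod_{C_2}A-\sum_{C_1}\prod_{C_2}A$ for integers $C_1,C_2$ bounded in terms of $C$ and $k$, hence only in terms of $\vare_1$; taking the corollary's $C$ at least this large gives the statement. The hard part of the whole argument is the construction of the valuation-$1$ element $w$ in the unramified case; the rest is routine manipulation of the sum-product notation (padding indices, using $0\in A'$, and so on).
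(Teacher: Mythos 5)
Your proof is correct and follows essentially the same route as the paper: both reduce to Theorem~\ref{t:BoundedGenerationScalarSum} with $m=1$, handle the ramified case directly from $\pi_{\pfr^2}(A)=\pi_{\pfr^2}(\ocal)$, and in the unramified case derive a valuation-one element from the fact that no additive section $\f\to\ocal/\pfr^2\ocal$ can exist when $e=1$. The paper simply applies the theorem to $\Omega\subseteq A$ and $\gen{A}{2}$, whereas you apply it to the explicitly built set $A'=(A-a_0)\cup\{w\}$ with $w=a_1(a_\alpha+a_\beta-a_\gamma-a_{\alpha+\beta-\gamma})$; your multiplication by the unit $a_1$ makes it transparent that $w\in\gen{A}{2}$, which is a useful bit of bookkeeping that the paper leaves implicit, but the underlying idea is the same.
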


{\em Assuming the first $N$ grades of $R$ with respect to powers of $p$ have equal cardinality.} In Proposition~\ref{prop-bounded-generation-for-equal-grades}, we consider the case where $|{\rm gr}_{0,p}(R;\ocal)|=\cdots=|{\rm gr}_{N-1,p}(R;\ocal)|$. Using the description of closed subrings of $\ocal$, we deduce that $\pi_{p^{N-4}}(R)=\pi_{p^{N-4}}(\ocal_0)$ where $\ocal_0$ is the ring of integers of a closed subfield if $N$ is large compared to {\em the degree} of the field extension $K/\bbq_p$. Then we use Corollary~\ref{c:BoundedGenerationFullModP} to get the desired bounded generation. For small $N$, Bourgain-Katz-Tao's sum-product result for finite fields is used. 

{\em Bourgain's technique of detecting mutations.} In Lemma~\ref{lem-bounded-generation-large-mod-all-powers-till-N}, under the assumption that the box dimension of $A$ is at least $\vare$ at the scale $|\pfr^i|$ for any integer $i$ in $[0,N-1]$, we prove a bounded generation result. It is clear that the cardinality of the grades ${\rm gr}_{i,p}(R;\ocal)$ of the ring $R$ generated by $A$ with respect to powers of $p$ is a non-decreasing sequence; and it has at most $[K:\bbq_p]$ many jumps. So at least one of the ranges where equality of grades occurs is {\em large}. And one would hope to zoom in this portion and use the previous step. However there are two important issues: (1) we need to rescale a subset of $A$ in order to zoom in to the equal grade portion; and this changes the ring; (2) it is not clear why we can get any element at the particular grade where the mutation occurred only in a {\em bounded} number of steps. For these reasons, we recursively define a sequence of quadruples $(A_i,R_i,n_i,C_i)$ where $R_i$ is the ring generated by $A_i$; the first $n_i$ grades of $R_i$ have the same cardinality; in $C_i$ steps $A_i$ generates a {\em large} ideal of $R_i$ at the scale $|p^{n_i}|$; $\pi_{\pfr}(A_i)=\pi_{\pfr}(R_i)$; and $|\pi_p(R_i)|$ is getting larger (see Step 0 of the proof of Lemma~\ref{lem-bounded-generation-large-mod-all-powers-till-N}). Having this sequence, we get the needed bounded generation result.  

{\em Finishing proof of Theorem~\ref{t:ThickSegment}.} For small residue fields, we use Bourgain's result. We notice that Bourgain had assumed that $p$ is a fixed {\em large} prime; but in the proof the largeness of $p$ is only used to ensure the extension $K/\bbq_p$ is not widely ramified. This, in turn, is only used to understand closed subrings of $\ocal$. So results of Section~\ref{s:subrings} of this note automatically extends Bourgain's result to any fixed prime $p$. For large residue fields, another application of the regularization technique gives us a set where we can use the previous step and deduce the desired result.

\subsection{Notation.}
In this note, $K$ is a finite extension of $\bbq_p$, $\ocal$ is its ring of integers, $\pfr|p$ is a uniformizing element, $\f:=\ocal/\langle \pfr \rangle$ is its residue field, and $e$ is the ramification index of $K$ over $\bbq_p$, i.e. $\langle p\rangle=\langle \pfr^e\rangle$. For any ring $R$ and $a\in R$, $\pi_a:R\rightarrow R/\langle a \rangle$ is the canonical quotient map $\pi_a(x):=x+\langle a \rangle$. 

We use the usual Vinogradov notation: $x\gg y$ means that there is a universal positive constant $c$ such that $x\ge cy$, and $x\gg_{z_1,z_2} y$ means that there is a positive function $c(z_1,z_2)$ of $z$ such that $x\ge c(z_1,z_2) y$. 

For a subset $A$ of a ring $R$ and a positive integer $C$, let $\sum_C A:=\{\sum_{i=1}^C a_i|\h a_i\in A\}$, $\prod_CA :=\{\prod_{i=1}^C a_i|\h a_i\in A\}$, and $\gen{A}{C}:=\{\sum_{i=1}^C\prod_{j=1}^C a_{ij}-\sum_{i=1}^C\prod_{j=1}^C a_{ij}'|\h a_{ij},a_{ij}'\in A\}$; and $\langle A\rangle$ is the subring generated by $A$. 
 
\section*{Acknowledgements} I am in debt of P. Varj\'{u} for explaining to me his joint work in progress with E. Lindenstrauss, where they give a new proof of a sum-product result for $\bbz/2^N\bbz$. I would like to thank E. Lindenstrauss for the fruitful conversations that helped me to strengthen my initial results. I would like to again thank E. Lindenstrauss and P. Varj\'{u} for allowing me to include slight variation of some of their arguments before the completion of their work. I am thankful to A. Mohammadi and K. Kedlaya for helpful conversations. And finally I would like to express my gratitude to J. Bourgain for our enlightening communications regarding this problem.

\section{Structure of subrings}\label{s:subrings}
In this section we describe structure of subrings of the ring $\ocal$ of integers of a finite extension $K$ of $\bbq_p$ (see Theorem~\ref{t:SubringOfO}). To do so first graded subrings of the ring of polynomials over a finite field are described (see Proposition~\ref{p:SubringOfPolynomialRing}). Along the way a result for semigroups of non-negative integers is proved that is of independent interest (see Proposition~\ref{p:NumercialSemigroups}). It should be said that Theorem~\ref{t:SubringOfO} will be used only towards the end of the article in the proof of Theorem~\ref{t:ThickSegment}. But the technique of assigning a graded set to a subset of $\ocal$ in order to understand the shape of this set in various scales is well illustrated in this section. One can take Theorem~\ref{t:SubringOfO} as a blackbox and skip this section; but the basic properties proved and notation introduced in Lemma~\ref{lem-attached-graded-rings} and Corollary~\ref{cor-grade-shift} should be reviewed. 
\subsection{Structure of graded subrings of the ring of polynomials over a finite field.}

The main goal of this section is to prove the following proposition.
\begin{prop}\label{p:SubringOfPolynomialRing}
Let $\f$ be a finite field, and $S:=\bigoplus_{i=0}^{\infty} S_i t^i$ be a graded subring of the ring of polynomials $\f[t]$ over the field $\f$; that means $S_i$ is an additive subgroup of $\f$ and $S_iS_j\subseteq S_{i+j}$, for any $i,j$; in particular $S_0$ is a subfield of $\f$. Suppose 
\be\label{eq-subring-parameters}
[\f:S_0]:=d\hspace{1cm}\text{{\rm  and}}\hspace{1cm}
S_m\neq 0.
\ee  
For any integer $C\ge 3$ and any integer $F\gg_{C,m,d} 1$, there are integers $r$, $a$, and $b$, an element $\lambda\in \f$, and a subfield $\f_0$ of $\f$ such that $r|m$, $S_0\subseteq \f_0$,
\begin{align}
	\label{eq-thickness} b-a\gg_{C,d,m} F ,&\h{\rm and }\h b\ge Ca,\\
	\label{eq-ring-structure} \pi_{t^{bm}}(S\cap t^{am}\f[t])&=\pi_{t^{bm}}(t^{am}\f_0[\lambda t^r])
\end{align}
where $\pi_{t^k}:\f[t]\rightarrow \f[t]/t^k\f[t]$ is the canonical quotient map and $\f_0[\lambda t^r]:=\bigoplus_{i=0}^{\infty} \f_0(\lambda t^r)^i$.
\end{prop}
   Proposition \ref{p:SubringOfPolynomialRing} has an immediate implication for numerical semigroups that is of independent interest.
\begin{minipage}{.84\textwidth}
\begin{prop}\label{p:NumercialSemigroups}
	Let $J$ be a subsemigroup of non-negative integers.  Suppose the greatest common divisor of elements of $J$ is one (such a semigroup is called a numerical semigroup).
 Let $m:=\min J\setminus \{0\}$ (it is called the multiplicity of $J$), and let $F:=\max \bbz\setminus J$ (it is called the Frobenius number of $J$). Suppose $C\ge 3$ and $F\gg_{C,m} 1$. Then there are integers $a,b$, and $d$, such that $d|m$, $d>1$, 
\begin{align}
	\label{eq-thickness-semigroup} b-a\gg_{C,m} F &,\h \text{ and }\h\h b\ge Ca,\\
	\label{eq-semigroup-arithmetic-progression} 	J\cap [a,b)&=d\bbz\cap [a,b).
\end{align}
	\end{prop} 
\begin{proof}
Let $S:=\bigoplus_{j\in J}\f (t^j)$. Since $J$ is a subsemigroup of non-negative integers, $S$ is a graded subring of $\f[t]$. Since $S_0=\f$, by Proposition~\ref{p:SubringOfPolynomialRing}, there are integers $r$, $a'$, and $b'$ such that $d|m$ 
\be\label{eq-ring-to-semigroup}
b'-a'\gg_{C,m} F,  b'\ge Ca',{\rm and }\h
\pi_{t^{b'm}}(S\cap t^{a'm}\f[t])=\pi_{t^{b'm}}(t^{a'm}\f[t^d]).
\ee  	
Since $\pi_{t^{b'm}}(S\cap t^{a'm}\f[t])=\pi_{t^{b'm}}(\oplus_{j\in J\cap [a'm,b'm)}\f(t^j))$, by \eqref{eq-ring-to-semigroup} we deduce that 
\[
J\cap [a'm,b'm)=d\bbz \cap [a'm,b'm);
\]
and the claim follows.
\end{proof}
The following picture shows us a numerical semigroup $J$ with the multiplicity $m(J)=6$ and the Frobenius number $F(J)=133$. We can see that the set of non-negative integers can be covered with at most $m(J)$ windows with different {\em patterns}. And one of the {\em large} windows is going to have a {\em regular strip} pattern.
\end{minipage}
\begin{minipage}{.16\textwidth} 
\begin{tikzpicture}
\def\m{6}
\def\l{24}
\def\t{.35}
\pgfmathparse{\t*\m+.25};
\xdef\xfinal{\pgfmathresult}; 
\pgfmathparse{\t*\l+.25};
\xdef\yfinal{\pgfmathresult};         
\clip (-.25,-1) rectangle (\xfinal,\yfinal);

\foreach \i in {0,107,32,123,16,139}
	{
	\edef\q{0};
	\pgfmathparse{floor(\i/\m)};
	\xdef\q{\pgfmathresult};
	\pgfmathparse{(\l-\q)*\t};
	\xdef\y{\pgfmathresult};
	\draw [thick, draw=red] (-\t,\y)--(\m,\y);
	\edef\upperbound{0};
	\pgfmathparse{floor((\l*\m-\i-1)/\m)};
	\xdef\upperbound{\pgfmathresult};
	\foreach \counter in {0,...,\upperbound}
		{
		\edef\j{0};
		\pgfmathparse{\counter*\m+\i};
		\xdef\j{\pgfmathresult};
		\edef\r{0};
		\edef\q{0};
		\pgfmathparse{floor(\j/\m)};
		\xdef\q{\pgfmathresult};
		\pgfmathparse{\j-\q*\m};
		\xdef\r{\pgfmathresult};
		\pgfmathparse{(\l-\q)*\t};
		\xdef\y{\pgfmathresult};
		\pgfmathparse{\r*\t};
		\xdef\x{\pgfmathresult};
		\draw [ultra thin, draw=black, fill=blue, fill opacity=0.2]
		(\x,\y-\t) rectangle +(\t,\t);
		};
	};
\draw[step=\t,gray,very thin](0,0) grid (\m*\t,\l*\t);	
\pgfmathparse{int(\m-1)};
\xdef\m{\pgfmathresult};
\pgfmathparse{int(\l-1)};
\xdef\l{\pgfmathresult};
\edef\v{int(0)};
\pgfmathparse{int(\l*(\m+1)-1)};
\xdef\v{\pgfmathresult};
\foreach \y in {0,...,\l}
	{
	\foreach \x in {0,...,\m} 
		{
		\pgfmathparse{int(\v+1)};
        \xdef\v{\pgfmathresult};
        \pgfmathparse{(\x+.5)*\t};
        \xdef\c{\pgfmathresult};
        \pgfmathparse{(\y+.5)*\t};
        \xdef\d{\pgfmathresult};        
		\node at (\c,\d) {\tiny $\v$};		
		};
	\pgfmathparse{\v-2*\m-2};
    \xdef\v{\pgfmathresult};
	};
\pgfmathparse{7*\t};
\xdef\ylowercorner{\pgfmathresult};
\pgfmathparse{(\m+2)*\t};
\xdef\xuppercorner{\pgfmathresult}; 
\pgfmathparse{(\l-4)*\t};
\xdef\yuppercorner{\pgfmathresult}; 
\draw[pattern=north east lines, pattern color=orange, fill opacity=0.4] (-\t,\ylowercorner) rectangle (\xuppercorner,\yuppercorner);
\pgfmathparse{((\m+1)/2)*\t};
\xdef\center{\pgfmathresult}; 
\draw (\center,-.25) node {\tiny{Large window where}}; 
\draw (\center,-.5) node {\tiny{exactly multiples of}}; 
\draw (\center,-.75) node {\tiny{$d=2$ can be seen.}};
\end{tikzpicture}
\end{minipage}  
To prove Proposition~\ref{p:SubringOfPolynomialRing}, we start with the following combinatorial lemma.
\begin{lem}\label{l:combinatorial-lemma-deg}
	Suppose $\{d_i\}_{i=0}^{\infty}$ is a sequence of non-negative integers with the following properties.
	\begin{enumerate}
	\item[(A1)] For a positive integer $d$ we have $0\le d_i\le d$ for any $i$.
	\item[(A2)] If $d_i\neq 0$, then for any non-negative integer $j$ we have $d_j\le d_{i+j}$.
	\item[(A3)] For some positive integer $m$, $d_m$ is not zero; and $d_0$ is not zero.  
	\end{enumerate}
	Then for any integers $C\ge 2$ and $F\ge C^{md+1}$ there are integers $a$, $b$, and $x_0,\ldots,x_{m-1}$ such that 
	\begin{enumerate}
		\item[(C1)]($m$-periodic) For any $i\in [am,bm)$, $d_i=x_{r_i}$ where $r_i$ is the remainder of $i$ divided by $m$.
		\item[(C2)](Length of periodicity) $b\ge C a$, and $b\ge C^{-md-1} F$. 
	\end{enumerate}
\end{lem}
\begin{remark}
	It is worth mentioning that, if $d=2$, then the assumptions (A1) and (A2) are equivalent to saying that the map $i\mapsto d_i$ is the characteristic function of a subsemigroup of non-negative integers. 
\end{remark}
\begin{proof}[Proof of Lemma~\ref{l:combinatorial-lemma-deg}]
Since $d_m\neq 0$ (see (A3)), we have that $i\mapsto d_{mi+j}$ is increasing for any non-negative integer $j$. And so for non-negative integers $j$ and $l$ the set $\{i\in \bbz^{\ge 0}|\h d_{im+j}=l\}$ is a segment of integers. Since $0\le d_{mi+j}\le d$, for any $j$ there is a partition $P_j:=\{I_{0,j},\ldots,I_{d,j}\}$ of the set of non-negative integers such that for any $i\in I_{l,j}$ we have $d_{mi+j}=l$ (some of $I_{l,j}$'s might be empty). Collecting all the end points of the intervals $I_{l,j}\cap [0,F]$ (for $0\le l\le d$ and $0\le j\le m-1$) we get integers $0=:g_0\le g_1\le \cdots \le g_{md}\le g_{md+1}:=F$ such that for any index $j$ and integer $i\in [g_jm,g_{j+1}m)$ the number $d_i$ only depends on the remainder of $i$ divided by $m$.  

{\bf Claim 1.} There is an integer $j\in [0,md]$ such that $(g(j+1)+1)\ge C (g(j)+1)$ . 

{\em Proof of Claim 1.} Suppose to the contrary that for any $j\in [1,md]$ we have $(g(j+1)+1)<C(g(j)+1)$; then inductively we get that $(g(md+1)+1)<C^{j}(g(md+1-j)+1)$ for any integer $j\in [1,md+1]$. And for $j=md+1$, we get that $F+1<C^{md+1}$; and this gives us a contradiction.

{\bf Claim 2.} There is an integer $j\in [1,md]$ such that $(g(j+1)+1)\ge C(g(j)+1)$ and $g(j+1)\ge C^{-md-1} F$.

{\em Proof of Claim 2.} Let $j_0$ be the largest integer in $[0,md]$ such that $(g(j_0+1)+1)\ge C(g(j_0)+1)$ (using Claim 1 we know that such an integer exists). Since $j_0$ is the largest integer with this property, we have 
\[
(g(i+1)+1)< C (g(i)+1),\h \text{ for any integer }i\in [j_0+1,md].
\] 
Hence $g(md+1)+1<C^{md-j_0} (g(j_0+1)+1)<C^{md}(g(j_0+1)+1)$; and this implies 
\[
C^{-md-1}F< C^{-md} F+C^{-md}-1< g(j_0+1).
\]

Let $b:=g(j_0+1)$ and $a:=g(j_0)$. Claim 1 and Claim 2 imply that $a$ and $b$ satisfy the conclusion (C2) in the statement of Lemma~\ref{l:combinatorial-lemma-deg}. For an integer $j\in [0,m)$, let $x_j:=d_{am+j}$. Since for any integer $i\in [am,bm)$ the number $d_i$ only depends on the remainder $r_i$ of $i$ divided by $m$, we get that $d_i=x_{r_i}$ for any integer $i\in [am,bm)$; and the claim follows.  
\end{proof}
\begin{proof}[Proof of Proposition~\ref{p:SubringOfPolynomialRing}]
Since $S=\bigoplus_{i=0}^{\infty} S_it^i$ is a graded subring of the ring of polynomials $\f[t]$ over a finite field, we have that $S_0$ is a subfield of $\f$ and $S_i$ is a $S_0$-vector space for any non-negative integer $i$	. Let $d_i:=\dim_{S_0} S_i$. Next we check that the assumptions (A1), (A2), and (A3) of Lemma~\ref{l:combinatorial-lemma-deg} hold for the sequence $\{d_i\}_{i=0}^{\infty}$. As $S_i$'s are $S_0$-subspaces of $\f$, we get that $\dim_{S_0} S_i\le \dim_{S_0}\f$; and so the the assumption (A1) of Lemma~\ref{l:combinatorial-lemma-deg} holds for the sequence $\{d_i\}_{i=0}^{\infty}$. Since for non-negative integers $i,j$ we have $S_iS_j\subseteq S_{i+j}$, we get that the assumption (A2) of Lemma~\ref{l:combinatorial-lemma-deg} holds for the sequence $\{d_i\}_{i=0}^{\infty}$. And because of \eqref{eq-subring-parameters}, the assumption (A3) of Lemma ~\ref{l:combinatorial-lemma-deg} holds for the sequence $\{d_i\}_{i=0}^{\infty}$. So by Lemma~\ref{l:combinatorial-lemma-deg}, there are integers $a$, $b$, $x_0,\ldots, x_{m-1}$ that satisfy the conclusions (C1) and (C2) of Lemma~\ref{l:combinatorial-lemma-deg}. 

Let $\lambda\in S_m\setminus\{0\}$ and $\sbar_i:=\lambda^{-\lfloor i/m\rfloor} S_i$. Then $1\in \sbar_m$; and so $\sbar_i\subseteq \sbar_m\sbar_i$. On the other hand, $S_mS_i\subseteq S_{i+m}$ implies that $\sbar_m\sbar_i\subseteq \sbar_{i+m}$. For any integer $i\in [ma,m(b-1))$, we have that $\dim_{S_0} \sbar_{i+m}=d_{i+m}=d_i=\dim_{S_0}\sbar_i$. Altogether we get that 
\be\label{eq-equality-grades}
\sbar_i=\sbar_{i+m}, \hspace{.5cm} \text{ for any integer i}\in [am, (b-1)m).
\ee
Let $B_{i}:=\sbar_{am+i}$ for any integer $i\in [0,m)$. And so by \eqref{eq-equality-grades} we have that for any integer $i\in [am,bm)$, $\sbar_i=B_{r_i}$ where $r_i$ is the remainder of $i$ divided by $m$. We will identify the set of integers in the interval $[0,m)$ with $\bbz/m\bbz$, and in the rest of the proof $\pi_m(i):=i+m\bbz$ will be used instead of $r_i$; that means we have $B_{\pi_m(i)}=\sbar_i$ for any integer $i\in [am,bm)$. 

{\bf Claim.} Let $I:=\{\bar{i}\in \bbz/m\bbz|\h B_{\bar i}\neq 0\}$. Then $I$ is a subgroup of $\bbz/m\bbz$.

{\em Proof of Claim.} Suppose $i,j$ are two integers in $[0,m)$ and $\pi_m(i),\pi_m(j)\in I$. Then $B_{\pi_m(i)}$ and $B_{\pi_m(j)}$ are non-zero $S_0$-vector spaces. Since $(2a+1)m<Cam\le bm$, we have 
\be\label{eq-grading}
S_{am+i}=\lambda^{\lfloor i/m\rfloor} B_{\pi_m(i)}, \h
S_{am+j}=\lambda^{\lfloor j/m\rfloor} B_{\pi_m(j)}, \h \text{and }
S_{2am+i+j}=\lambda^{\lfloor i+j/m\rfloor} B_{\pi_m(i+j)}; 
\ee
and so the grading equation $S_{am+i}S_{am+j}\subseteq S_{2am+i+j}$ implies that   
\be\label{eq-translation-of-grading}
0\neq B_{\pi_m(i)}B_{\pi_m(j)}\subseteq \lambda^{(\lfloor i+j/m\rfloor-\lfloor i/m\rfloor-\lfloor j/m\rfloor)}B_{\pi_m(i+j)}=\lambda^{\lfloor i+j/m\rfloor}B_{\pi_m(i+j)};
\ee
in particular $\pi_m(i+j)=\pi_m(i)+\pi_m(j)\in I$. And the claim follows.

Suppose $r$ is the positive integer in $[0,m)$ such that $I=r\bbz/m\bbz$; in particular $r|m$. Let $s:=m/r$. Suppose $\eta\in B_{r}\setminus\{0\}$, and let $\bbar_i:= \eta^{-i} B_{\pi_m(ri)}$ for any integer $i$ in $[0,s)$. So \eqref{eq-translation-of-grading} implies that 
$\bbar_0 \cdot \bbar_i \eta^i \subseteq \bbar_i \eta^i$ for any integer $i$ in $[0,s)$. Therefore we get 
\be\label{eq-grade0}
\bbar_i=\bbar_0\cdot \bbar_i,
\ee
for any integer $i$ in $[0,s)$; in particular $\bbar_0$ is a subfield of $\f$, and $\bbar_i$'s are $\bbar_0$-vector spaces. Another application of \eqref{eq-translation-of-grading} implies that $\bbar_1\eta \cdot \bbar_i \eta^i \subseteq \bbar_{i+1} \eta^{i+1}$ for any integer $i$ in $[0,s-1)$. Therefore we get 
\be\label{eq-grade1}
\bbar_1\cdot \bbar_i \subseteq \bbar_{i+1},
\ee
for any integer $i$ in $[0,s-1)$. By \eqref{eq-grade0}, \eqref{eq-grade1}, and the fact that $1\in \bbar_1$, we deduce that
\be\label{eq-grades-are-increasing}
\bbar_0\subseteq \bbar_1 \subseteq \cdots \subseteq \bbar_{s-1}.
\ee 
Next we again use \eqref{eq-translation-of-grading} to deduce that $\bbar_1\eta \cdot \bbar_{s-1}\eta^{s-1}\subseteq \lambda \bbar_0$. Therefore we have 
\be\label{eq-first-jump-in-the-degree}
\lambda^{-1}\eta^{s} \bbar_1\bbar_{s-1}\subseteq \bbar_0.
\ee 
By \eqref{eq-grades-are-increasing}, \eqref{eq-first-jump-in-the-degree}, the fact that $1\in \bbar_1$, and comparing the dimensions, we get that there is a subfield $\f_0$ of $\f$ such that 
\be\label{eq-grades-are-the-same-field}
\bbar_0=\bbar_1=\cdots=\bbar_{s-1}=\f_0,\h\text{ and }\h \lambda \f_0^{\times}=\eta^s\f_0^{\times}.
\ee 
Since $\bbar_0=\lambda^{-m}S_{am}\supseteq S_0$, we get that $\f_0$ is a field extension of $S_0$. By \eqref{eq-grading}, $\bbar_i:=\eta^{-i}B_{\pi_{m}(ri)}$, \eqref{eq-grades-are-the-same-field}, $\bbar_i:=\eta^{-i}B_{\pi_{m}(ri)}$, and $m=rs$, we have
\begin{align}\label{eq-part-of-smaller-polynomial-ring}
\bigoplus_{i=am}^{bm-1} S_i t^i&=\bigoplus_{i=a}^{b-1}\left(\bbar_0\oplus \bbar_1\eta t^r \oplus \cdots \oplus \bbar_{s-1}\eta^{s-1}t^{r(s-1)}\right) (\lambda t^m)^i \\
\notag &=\bigoplus_{i=as}^{bs-1} \f_0 (\eta t^r)^i.
\end{align}
By \eqref{eq-part-of-smaller-polynomial-ring}, we get \eqref{eq-ring-structure}; and since $a$ and $b$ satisfy the conclusions (C1) and (C2) of Lemma~\ref{l:combinatorial-lemma-deg}, we get \eqref{eq-thickness}. And the claim follows.
\end{proof}
 
\subsection{Structure of subrings of the ring $\ocal$ of integers of a finite extension $K$ of $\bbq_p$.} The main goal of this section is to prove Theorem~\ref{t:SubringOfO}; but before we get to that, let us define certain graded algebras which are crucial throughout this note.  For any $\eta\in \pfr\ocal$, we get a filtration \(\{\eta^i\ocal\}_{i=0}^{\infty}\) of \(\ocal\); and then we can define a corresponding graded algebra: let 
${\rm gr}_{\eta}(\ocal):=\bigoplus_{i=0}^{\infty} {\rm gr}_{i,\eta}(\ocal)$ where ${\rm gr}_{i,\eta}:=\pi_{\eta^{i+1}}(\eta^i\ocal)$. For any $x\in \ocal$, there is a unique non-negative integer $i$ such that $x\in \eta^i\ocal\setminus \eta^{i+1}\ocal$; we denote such non-negative integer by $\deg_{\eta}(x)$, and it is clear that $\deg_{\eta}(x)=\lfloor v_{\pfr}(x)/v_{\pfr}(\eta)\rfloor$. For any $x\in \ocal$ and $\eta\in \pfr\ocal$, we let $l_{\eta}(x):=\pi_{\pfr^{\deg_{\eta}+1}}(x)$ and $\overline{l}_{\eta}(x):=\pi_{\eta}(\eta^{-\deg_{\eta}(x)}x)\in \pi_{\eta}(\ocal)$ and call $\overline{l}_{\eta}(x)$ the $\eta$-leading term of $x$. The following is a useful lemma that justifies the use of the above terminology for $\overline{l}_{\eta}(x)$. 
\begin{lem}\label{lem-attached-graded-rings}
	In the above setting, let $\phi_{\eta}:{\rm gr}_{\eta}(\ocal)\rightarrow \pi_{\eta}(\ocal)[t]$ be the graded map induced by 
	\[
	\phi_{\eta}(l_{\eta}(x)):=\overline{l}_{\eta}(x)t^{\deg_{\eta} x},
	\]
	 where $\pi_{\eta}(\ocal)[t]$ is the ring of polynomials over $\pi_{\eta}(\ocal)$. Then $\phi_{\eta}$ is a graded ring isomorphism.  
\end{lem}
\begin{proof}
	Suppose $\overline{x}\in {\rm gr}_{i,\eta}(\ocal)$; then $\overline{x}=\pi_{\pfr^{i+1}}(x)$ for some $x\in \eta^i\ocal$. And we have $\phi_{\eta}(\overline{x})=\pi_{\pfr}(\eta^{-i}x)t^i$. Therefore for $\overline{x},\overline{x'}\in {\rm gr}_{i,\eta}(\ocal)$, there are $x,x'\in \eta^i\ocal$ such that $\pi_{\pfr^i}(x)=\overline{x}$ and $\pi_{\pfr^i}(x')=\overline{x}'$; and 
	\[
	\phi_{\eta}(\overline{x}+\overline{x}')=\phi_{\eta}(\pi_{\pfr^{i+1}}(x+x'))=\pi_{\eta}(\eta^{-i}(x+x'))t^i=\pi_{\eta}(\eta^{-i}x)t^i+\pi_{\eta}(\eta^{-i}x')t^i=\phi_{\eta}(\overline{x})+\phi_{\eta}(\overline{x}').
	\]
	Since $\ocal$ is an integral domain, $\pi_{\eta^{i+1}}(x)\mapsto \pi_{\eta}(\eta^{-i}x)$ is a bijection from $\pi_{\eta^{i+1}}(\eta^i\ocal)$ to $\pi_{\eta}(\ocal)$. Hence $\phi_{\eta}$ is an additive group isomorphism from ${\rm gr}_{\eta}(\ocal)$ to $\pi_{\eta}(\ocal)[t]$.
	
	For $\overline{x}\in {\rm gr}_{i,\eta}(\ocal)$ and $\overline{x}'\in {\rm gr}_{j,\eta}(\ocal)$, we have that $\overline{x}=\pi_{\eta^{i+1}}(x)$ and $\overline{x}'=\pi_{\eta^{j+1}}(x')$ for some $x\in \eta^i\ocal$ and $x'\in \eta^j\ocal$; and $\phi_{\eta}(\overline{x})=\pi_{\eta}(\eta^{-i}x)t^i$ and $\phi_{\eta}(\overline{x}')=\pi_{\eta}(\eta^{-j}x')t^j$. 	
	Then based on the graded structure of ${\rm gr}_{\eta}(\ocal)$, we have $\overline{x}\overline{x}'=\pi_{\eta^{i+j+1}}(xx')$; and so $\phi_{\eta}(\overline{x}\overline{x}')=\pi_{\eta}(xx')t^{i+j}=\phi_{\eta}(\overline{x})\phi_{\eta}(\overline{x}')$. And the claim follows.
\end{proof}
\begin{cor}\label{cor-grade-shift}
Suppose $x\in\ocal\setminus\pfr\ocal$; then for any non-negative integers $i$ and $k$ multiplication by $x\eta^i$ induces a bijection $x_k+\eta^{k+1}\ocal\mapsto x_kx\eta^i+\eta^{k+i+1}\ocal$ from ${\rm gr}_{k,\eta}(\ocal)$ to ${\rm gr}_{k+i,\eta}(\ocal)$.   
\end{cor}
\begin{proof}
We notice that $x$ is a unit in $\ocal$; and so $\pi_{\eta}(x)$ is a unit in $\pi_{\eta}(\ocal)$; and so multiplication by $\pi_{\eta}(x)$ induces a bijection from $\pi_{\eta}(\ocal)$ to itself. And by Lemma~\ref{lem-attached-graded-rings} the claim follows. 
\end{proof}

To any subset $X$ of $\ocal$ we associate a graded subset ${\rm gr}_{\eta}(X;\ocal)$ of ${\rm gr}_{\eta}(\ocal)$. For a non-negative integer $i$, we let ${\rm gr}_{i,\eta}(X;\ocal):=\pi_{\eta^{i+1}}(X\cap \eta^i\ocal)$; and we define ${\rm gr}_{\eta}(X;\ocal):=\bigoplus_{i=0}^{\infty} {\rm gr}_{i,\eta}(X;\ocal)$. 

\begin{lem}\label{lem-graded-subrings}
	In the above setting, suppose $R$ is a subring of $\ocal$; then ${\rm gr}_{\eta}(R;\ocal)$ is a subring of ${\rm gr}_{\eta}(\ocal)$.
\end{lem}
\begin{proof}
It is clear.	
\end{proof}
\begin{proof}[Proof of Theorem~\ref{t:SubringOfO}]
By Lemma~\ref{lem-graded-subrings}, we have that ${\rm gr}_{\pfr}(R;\ocal)$ is a subring of ${\rm gr}_{\pfr}(\ocal)$; and by Lemma~\ref{lem-attached-graded-rings} there is a graded algebra isomorphism $\phi:{\rm gr}_{\pfr}(\ocal)\rightarrow \f[t]$ where $\f$ is the residue field of $\ocal$. Since $1$ is in $R$, we have that ${\rm gr}_{0,\pfr}(R;\ocal):=\pi_{\pfr}(R)$ is a subfield of $\f$ and $p\in R$. The latter implies that ${\rm gr}_{e,\pfr}(R;\ocal)\neq 0$ where $e:=v_{\pfr}(p)$ is the ramification index of $K$ over $\bbq_p$. It is well-known that the degree $[K:\bbq_p]$ of the field extension $K$ of $\bbq_p$ is equal to the product of its ramification index $e$ and its residue degree $[\f:\f_p]$.    Hence by Proposition~\ref{p:SubringOfPolynomialRing} there are a subfield $\f_0$ of $\f$ which is an extension of ${\rm gr}_{0,\pfr}(R;\ocal)$, $\lambda\in \f$, and positive integers $r,a,b$ such that $r|e$ and 
\be\label{eq-at-the-level-of-grading}
b-a\gg_{C,[K:\bbq_p]} F, \hspace{2mm} b\ge Ca, \text{ and } \bigoplus_{ae\le i<be} \phi({\rm gr}_{i,\pfr}(R;\ocal))=\bigoplus_{ae/r\le j< be/r} \f_0 (\lambda t^r)^j.
\ee
An immediate consequence of \eqref{eq-at-the-level-of-grading} is the existence of certain elements in $R$ which help us pass to smaller scales {\em within} $R$. By \eqref{eq-at-the-level-of-grading}, for any integer $j$ in $[1,e/r)$, there is $\wh{\xi}_j\in R$ such that $v_{\pfr}(\wh{\xi}_j)=ea+rj$. By letting $\wh{\xi}_0:=p^a$, we get the same claim for $j=0$ as well. For any integer $i$ in $[a,b)$ and integer $j$ in $[0,e/r)$, let $\wh{\xi}_{i,j}:=p^{i-a}\wh{\xi}_j$.  Hence for any such integers $i$ and $j$, we have 
\be\label{eq-shifting-elements-in-R}
\wh{\xi}_{i,j}\in R  \text{ and } v_{\pfr}(\wh{\xi}_{i,j})=ei+rj.
\ee
For any integer $k$ in $[0,(b-a)e/r)$, let $\xi_{k}:=p^{-a}\wh{\xi}_{\overline{q},\overline{r}}$ where $\overline{q}$ is the quotient of $rk+ea$ divided by $e$ and $r\overline{r}$ is the remainder of $rk+ea$ divided by $e$. Therefore for any such $k$, $\overline{q}$, and $\overline{r}$ by \eqref{eq-shifting-elements-in-R} we have
\be\label{eq-scaling-parameters}
p^a\xi_k\in R \text{ and }
v_{\pfr}(\xi_k)=-ea+v_{\pfr}(\wh{\xi}_{\overline{q},\overline{r}})=-ea+(e\overline{q}+r\overline{r})=rk.
\ee

Another consequences of \eqref{eq-at-the-level-of-grading} is the fact that ${\rm gr}_{i,\pfr}(R;\ocal)$ is zero if $r\nmid i$; and so for any non-negative integer $i$ we have 
\be\label{eq-intersection-goes-to-a-smaller-ball}
R\cap \pfr^i \ocal=R\cap \pfr^{r\lceil i/r\rceil} \ocal.
\ee

And the last immediate consequence of \eqref{eq-at-the-level-of-grading} (see Corollary~\ref{cor-grade-shift}) that we mention here is that for any integer $i$ in $[ae,(b-1)e)$ 
\be\label{eq-bijection-multiplying-by-p}
x+\pfr^{i+1}\ocal\mapsto px+\pfr^{e+i+1}\ocal \text{ is a bijection from } {\rm gr}_{i,\pfr}(R;\ocal) \text{ to }{\rm gr}_{e+i,\pfr}(R;\ocal).
\ee
By \eqref{eq-at-the-level-of-grading} we also deduce that there is a function $\wt{s}:\f_0\rightarrow R\cap p^a\ocal$ such that $\pi_{\pfr}(p^{-a}\wt{s}(x))=x$ for any $x\in \f_0$.

{\bf Claim 1.} Let $s:\f_0\rightarrow \ocal, s(x):=p^{-a}\wt{s}(x)$ and $r$ be as in \eqref{eq-at-the-level-of-grading}. Then $\pi_{\pfr^r}\circ s:\f_0\rightarrow \pi_{\pfr^r}(\ocal)$ is a ring embedding.

{\em Proof of Claim 1.} For any $x\in \f_0$, we have $\pi_{\pfr}(s(x))=x$. Hence for any $x_1,x_2\in \f_0$, we have $s(x_1+x_2)-s(x_1)-s(x_2)\in \pfr \ocal$; this implies that
\be\label{eq-initial-cancellation-addition}
\wt{s}(x_1+x_2)-\wt{s}(x_1)-\wt{s}(x_2)\in R\cap \pfr^{ea+1}\ocal.
\ee
Hence by \eqref{eq-intersection-goes-to-a-smaller-ball} we have 
\be\label{eq-r-level-cancellation-addition}
\wt{s}(x_1+x_2)-\wt{s}(x_1)-\wt{s}(x_2)\in R\cap \pfr^{ea+r}\ocal.
\ee
And so $\pi_{\pfr^r}(s(x_1+x_2))=\pi_{\pfr^r}(s(x_1))+\pi_{\pfr^r}(s(x_2))$.

We also have $\wt{s}(x_1)\wt{s}(x_2)\in R\cap p^{2a}\ocal$. By \eqref{eq-bijection-multiplying-by-p}, there is $x'\in R\cap p^{a}\ocal$ such that 
\be\label{eq-level-reduction}
p^ax'-\wt{s}(x_1)\wt{s}(x_2)\in R\cap \pfr^{2ae+1}\ocal.
\ee
We also have $\pi_{\pfr}(p^{-a}\wt{s}(x_1x_2))=x_1x_2=\pi_{\pfr}(p^{-a}\wt{s}(x_1))\pi_{\pfr}(p^{-a}\wt{s}(x_2))=\pi_{\pfr}(p^{-2a}\wt{s}(x_1)\wt{s}(x_2))$. And so by \eqref{eq-level-reduction} we get 
$\pi_{\pfr}(p^{-a}x')=\pi_{\pfr}(p^{-a}\wt{s}(x_1x_2))$. Hence we get 
\be\label{eq-level-reduction-for-multiplication}
p^a\wt{s}(x_1x_2)-\wt{s}(x_1)\wt{s}(x_2)\in R\cap \pfr^{2ea+1}\ocal.
\ee
By \eqref{eq-level-reduction-for-multiplication} and \eqref{eq-intersection-goes-to-a-smaller-ball} we have
\be\label{eq-multiplication-initial-reduction}
p^a \wt{s}(x_1x_2)-\wt{s}(x_1)\wt{s}(x_2)\in R\cap \pfr^{2ae+r}\ocal.
\ee
Hence $p^{-a}\wt{s}(x_1x_2)-(p^{-a}\wt{s}(x_1))(p^{-a}\wt{s}(x_2))\in \pfr^r\ocal$; and this implies that
\be\label{eq-r-level-cancellation-multiplication} 
\pi_{\pfr^r}(s(x_1x_2))=\pi_{\pfr^r}(s(x_1))\pi_{\pfr^r}(s(x_2));
\ee
and the claim follows. QED.

By Hensel's lemma we know that there is a group embedding $\theta:\f^{\times}\rightarrow \ocal^{\times}$ such that for any $x\in \f$ we have $\pi_{\pfr}(\theta(x))$. Next we show that a good approximation of a multiple of $\theta(\f_0^{\times})$ can be found in $R$. This will help us to get the {\em unramified part} of $K_0$.

{\bf Claim 2.} Suppose $a,b$ are given as in \eqref{eq-at-the-level-of-grading} for a constant $C$ which is more than $3[K:\bbq_p]$. Then there is $\wh{s}:\f_0^{\times}\rightarrow R\cap p^{2a} \ocal$ such that $\wh{s}(x)-p^{2a}\theta(x)\in p^{b-2a([\f_0:\f_p]-1)}\ocal$ for any $x\in \f_0^{\times}$.

{\em Proof of Claim 2.} Let us fix $x_0\in \f_0^{\times}$. We notice that $\theta(x_0)$ is a zero of $t^{|\f_0|-1}-1$; and so its (monic) minimal polynomial $\Phi(t)$ over $\bbq_p$ is actually in $\bbz_p[t]$ and $\pi_{\pfr}(\Phi(t))$ does not have multiple zeros in $\f$. And so by the Hensel's lemma, if $\Phi(y)\in \pfr^l\ocal$ for some $y\in \ocal$ and positive integer $l$ and $\pi_{\pfr}(y)=x_0$, then $y-\theta(x_0)\in \pfr^l\ocal$. Moreover we notice that $d:=\deg \Phi$ is at most the residue index $d_0:=[\f:\f_p]$ of $K$ over $\bbq_p$.  Next for integers $k$ in $[0,(b-2ad_0)e/r)$ we inductively find $y_k\in \ocal$ such that 
\be\label{eq-approximating-zero-of-cyclotomic}
\pi_{\pfr}(y_k)=x_0,\h\h y_k-y_{k-1}\in \pfr^{rk} \ocal,\h\h p^{2a} y_k \in R,
\text{ and } \Phi(y_k)\in \pfr^{r(k+1)}\ocal,
\ee
where $r$ is as in \eqref{eq-at-the-level-of-grading}. The argument is similar to the proof of Hensel's lemma but we have to be careful that every step of estimation is done within the ring $R$ and  only in certain scales we have a control on $R$. We write the Taylor expansion of $\Phi(t)$ at $y_k$,
\be\label{eq-HenselTaylorExpansion}
\Phi(y_k+t)=\Phi(y_k)+\sum_{i=1}^{d} \frac{\Phi^{(i)}(y_k)}{i!} t^i;
\ee
and we point out that, since the coefficients of $\Phi$ are in $\bbz_p$, we have that $\frac{\Phi^{(i)}(t)}{i!}$ is again in $\bbz_p[t]$. Therefore, for any integer $i$ in $[1,d]$, $p^{2ai} \frac{\Phi^{(i)}(y_k)}{i!}$  
is in the $\bbz_p$-algebra generated by $p^{2a}y_k$; and so by the induction hypothesis
\be\label{eq-still-in-R}
p^{2a(d-i)} \frac{\Phi^{(i)}(y_k)}{i!}\in R. \text{ And in addition } y_k\in \ocal \text{ implies that } \frac{\Phi^{(i)}(y_k)}{i!}\in \ocal.
\ee
As we mentioned earlier, $\Phi(t)$ is a divisor of $t^{|\f_0|-1}-1$; since $t^{|\f_0|-1}-1$ does not have multiple zeros in $\f$, $\Phi(t)$ does not have multiple zeros in $\f$. Hence $\pi_{\pfr}(\Phi(y_k))=0$ implies that \be\label{eq-derivative-has-norm-1}
\pi_{\pfr}(\Phi'(y_k))\neq 0.
\ee

By \eqref{eq-derivative-has-norm-1} and the induction hypothesis (see the last condition in \eqref{eq-approximating-zero-of-cyclotomic}), we deduce that there is $x_{k+1}\in \f_0$ such that 
\be\label{eq-step-k-refinement-hensel}
\Phi(y_k)+\Phi'(y_k)(\xi_{k+1} s(x_{k+1}))\in \pfr^{r(k+1)+1}\ocal,
\ee
where $\xi_{k+1}$ is as in \eqref{eq-scaling-parameters}. We also notice that by \eqref{eq-scaling-parameters} and the induction hypothesis (see the third condition in \eqref{eq-approximating-zero-of-cyclotomic}) we have
\be\label{eq-still-being-in-R}
p^{2a}(y_k+\xi_{k+1}s(x_{k+1}))=p^{2a}y_k+(p^a\xi_{k+1})(p^as(x_{k+1}))\in R.
\ee

By \eqref{eq-HenselTaylorExpansion}, \eqref{eq-still-in-R}, \eqref{eq-step-k-refinement-hensel}, and \eqref{eq-still-being-in-R} we have
\be\label{eq-almost-the-next-step-hensel}
\Phi(y_k+\xi_{k+1}s(x_{k+1}))=\Phi(y_k)+\Phi'(y_k)(\xi_{k+1} s(x_{k+1}))+\sum_{i=2}^{d}  \frac{\Phi^{(i)}(y_k)}{i!} (\xi_{k+1} s(x_{k+1}))^i\in \pfr^{r(k+1)+1}\ocal,
\ee 
and 
\be\label{eq-bing-in-R}
p^{2ad}\Phi(y_k+\xi_{k+1}s(x_{k+1}))=p^{2ad}\Phi(y_k)+\sum_{i=1}^{d} \left(p^{2a(d-i)}\frac{\Phi^{(i)}(y_k)}{i!}\right) (p^a\xi_{k+1})^i(p^as(x_{k+1}))^i\in R.
\ee
By \eqref{eq-almost-the-next-step-hensel} and \eqref{eq-bing-in-R}, we have 
$p^{2ad}\Phi(y_k+\xi_{k+1}s(x_{k+1}))\in R\cap \pfr^{2ead+r(k+1)+1}\ocal$; and so by \eqref{eq-intersection-goes-to-a-smaller-ball} and having $k+1<(b-2ad_0)e/r$ we deduce that $p^{2ad}\Phi_{d}(y_k+\xi_{k+1}s(x_{k+1}))\in R\cap \pfr^{2ead+r(k+2)}\ocal$. Hence we get 
\be\label{eq-the-next-step-hensel}
 \Phi(y_k+\xi_{k+1}s(x_{k+1}))\in \pfr^{r(k+2)}.
\ee
Therefore by \eqref{eq-scaling-parameters}, \eqref{eq-still-being-in-R}, and \eqref{eq-the-next-step-hensel} we get that $y_{k+1}:=y_k+\xi_{k+1}s(x_{k+1})$ satisfies all the conditions mentioned in \eqref{eq-approximating-zero-of-cyclotomic}.

We notice that $\pi_{\pfr}(y_k)=x_0$ and $\Phi(y_k)\in \pfr^{r(k+1)}\ocal$ imply that 
\be\label{eq-close-to-root-of-unity}
y_k-\theta(x_0)\in \pfr^{r(k+1)}\ocal.
\ee

Let $k_0:=(b-2ad_0)e/r-1$ and $\wh{s}(x_0):=p^{2a}y_{k_0}$. Then by \eqref{eq-approximating-zero-of-cyclotomic} and \eqref{eq-close-to-root-of-unity} we have
\[
\wh{s}(x_0)\in R, \text{ and }\h \wh{s}(x_0)-p^{2a}\theta(x_0)\in \pfr^{2ae+r(k_0+1)}\ocal=p^{2a+(b-2ad_0)}\ocal;
\]
and the claim follows. QED.

Claim 2 implies that $\pi_{p^{b-2a(d_0-1)}}(p^{2a}\theta(\f_0^{\times}))\subseteq \pi_{p^{b-2a(d_0-1)}}(R)$. On the other hand, as $\theta(\f_0^{\times})$ is a subgroup of the group $\ocal^{\times}$ of units of $\ocal$, we have that 
\[
\textstyle
\bbz_p[\theta(\f_0^{\times})]=\{\sum_{x\in \f_0^{\times}}c_x \theta(x)|\h c_x\in \bbz_p\};
\]
and so we get 
\be\label{eq-unramified-part}
\pi_{p^{b-2a(d_0-1)}}(p^{2a}\bbz_p[\theta(\f_0^{\times})])\subseteq \pi_{p^{b-2a(d_0-1)}}(R).
\ee  

{\bf Claim 3.} Suppose $a,b,r$ are given as in \eqref{eq-at-the-level-of-grading} for a constant $C$ which is more than $3[K:\bbq_p]$. And as before let $\wh{\xi}_1 \in R\cap \pfr^{ae+r} \ocal\setminus \pfr^{ae+r+1}\ocal$. Let $b':=b-2a([\f_0:\f_p]-1)$. Then 
\[
\pi_{p^{b'}}(R\cap p^{4ar'}\ocal)=
\pi_{p^{b'}}(p^{ar'}(\ocal_{0,u}+\ocal_{0,u}\wh{\xi}_1+\cdots+\ocal_{0,u}\wh{\xi}_1^{r'-1})\cap p^{4ar'}\ocal),
\]
where $\ocal_{0,u}:=\bbz_p[\theta(\f_0^{\times})]$ and $r':=e/r$.

{\em Proof of Claim 3.} By \eqref{eq-unramified-part} we have $\pi_{p^{b'}}(p^{2a}\ocal_{0,u})\subseteq \pi_{p^{b'}}(R)$. As $\pi_{p^{b'}}(R)$ is a ring and $\pi_{p^{b'}}(\wh{\xi}_1)\in \pi_{p^{b'}}(R)$, we have
\be\label{eq-one-side-inclusion-of-rings}
\pi_{p^{b'}}(p^{2a}(\ocal_{0,u}+\ocal_{0,u}\wh{\xi}_1+\cdots+\ocal_{0,u}\wh{\xi}_1^{r'-1}))\subseteq \pi_{p^{b'}}(R).
\ee
Let $\wt{\xi}_{i,j}:=p^{j+a(r'-i)}\wh{\xi}_1^i$ for integers $i$ in $[0,r')$ and $j$ in $[0,b'-ar')$; then $v_{\pfr}(\wt{\xi}_{i,j})=ej+ea(r'-i)+eai+ri=ear'+ej+ri$. Hence for any integer $k$ in $[ar'^2,b'r')$ there is $\wt{\xi}_k\in \bigcup_{j=0}^{r'-1}\bbz_p\wh{\xi}_1^j$ such that $v_{\pfr}(\wt{\xi}_k)=rk$.

Hence by Corollary~\ref{cor-grade-shift} and \eqref{eq-at-the-level-of-grading}  we have that 
\be\label{eq-again-shifting-grades}
x+\pfr^{l+1}\ocal\mapsto \wt{\xi}_{k}x+\pfr^{l+rk+1}\ocal 
\text{ is a bijection from }
{\rm gr}_{l,\pfr}(R;\ocal) \text{ to } {\rm gr}_{l+rk,\pfr}(R;\ocal)
\ee
if $k$ is an integer in $[ar'^2,b'r')$ and $l$ is an integer in $[ae,be-rk)$.

Let us fix $y_0\in R\cap p^{4ar'}\ocal$. Inductively we prove that for any integer $m$ in $[4ar'^2,b'r']$ we have 
\[
\pi_{\pfr^{rm}}(y_0)\in \pi_{\pfr^{rm}}(p^{ar'}(\ocal_{0,u}+\ocal_{0,u}\wh{\xi}_1+\cdots+\ocal_{0,u}\wh{\xi}_1^{r'-1})).
\]

The case of $m=4ar'^2$ is clear as $\pfr^{r(4ar'^2)}\ocal=p^{4ar'}\ocal$. 

By the induction hypothesis, we have that there is 
\be\label{eq-induction-hypothesis}
v_m\in p^{ar'}(\ocal_{0,u}+\ocal_{0,u}\wh{\xi}_1+\cdots+\ocal_{0,u}\wh{\xi}_1^{r'-1})
\text{ such that } y_0-v_m\in \pfr^{rm}\ocal.
\ee
By \eqref{eq-one-side-inclusion-of-rings},  there are $z_m\in R$ and $e_m\in \ocal$ such that
$
 v_m=z_m+p^{b'}e_m.  
$ 
 And so by \eqref{eq-induction-hypothesis}
 \be\label{eq-level-k-approximation}
y_0-z_m\in R\cap \pfr^{rm}\ocal.
 \ee
On the other hand applying \eqref{eq-again-shifting-grades} for the parameters $l=2ae$ and $k=m-2ar'\ge 4ar'^2-2ar'\ge ar'^2$ we have that
\[
y_0-z_m-\wt{\xi}_{m-2ar'}\wh{s}(x_m)\in \pfr^{rm+1}\ocal\cap R;
\]
and so by \eqref{eq-intersection-goes-to-a-smaller-ball}
\be\label{eq-next-digit}
y_0-z_m-\wt{\xi}_{m-2ar'}\wh{s}(x_m)\in \pfr^{r(m+1)}\ocal\cap R.
\ee 
On the other hand, by Claim 2 and \eqref{eq-induction-hypothesis},  
\begin{align}\label{eq-why-it-is-in-the-RHS}
\pi_{p^{b'}}(z_m+\wt{\xi}_{m-2ar'}\wh{s}(x_m)) & 
=\pi_{p^{b'}}(v_m)+\pi_{p^{b'}}(\wt{\xi}_{m-2ar'})\pi_{p^{b'}}(\wh{s}(x_m))\\
\notag
& =\pi_{p^{b'}}(v_m)+\pi_{p^{b'}}(\wt{\xi}_{m-2ar'})\pi_{p^{b'}}(p^{2a}\theta(x_m))\\
\notag
& =\pi_{p^{b'}}(v_m)+\pi_{p^{b'}}(\wt{\xi}_{m})\pi_{p^{b'}}(\theta(x_m))\\
\notag
& \in 
\pi_{p^{b'}}(p^{ar'}(\ocal_{0,u}+\ocal_{0,u}\wh{\xi}_1+\cdots+\ocal_{0,u}\wh{\xi}_1^{r'-1}));
\end{align}
in the last assertion we are using 
\begin{align*}
v_{\pfr}(\wt{\xi}_{m-2ar'})-v_{\pfr}(\wh{\xi}_1^{r'-1})&\ge (4ar'^2-2ar')r-(r'-1)(ae+r)
\\
&> e((3r'-1)a-1)\ge ear'=v_{\pfr}(p^{ar'}).
\end{align*}
By \eqref{eq-why-it-is-in-the-RHS} there is 
\be\label{eq-next-step}
v_{m+1}\in p^{ar'}(\ocal_{0,u}+\ocal_{0,u}\wh{\xi}_1+\cdots+\ocal_{0,u}\wh{\xi}_1^{r'-1})
\text{ such that } z_m+\wt{\xi}_{m-2ar'}\wh{s}(x_m)-v_{m+1}\in p^{b'}\ocal.
\ee
By \eqref{eq-next-step} and \eqref{eq-next-digit} we have that there is 
\be\label{eq-final-step}
v_{m+1}\in p^{ar'}(\ocal_{0,u}+\ocal_{0,u}\wh{\xi}_1+\cdots+\ocal_{0,u}\wh{\xi}_1^{r'-1})
\text{ such that } y_0-v_{m+1}\in \pfr^{r(m+1)}\ocal;
\ee
and by \eqref{eq-one-side-inclusion-of-rings} and \eqref{eq-final-step} the claim follows. QED.

{\bf Claim 4.} Suppose $a,b,r$ are as in \eqref{eq-at-the-level-of-grading} for a constant $C\ge 12[K:\bbq_p]$. Suppose $\wh{\xi}_1\in R\cap \pfr^{ae+r}\ocal\setminus \pfr^{ae+r+1}\ocal$. Then there is $\xi\in \ocal$ such that
\[
\xi-\wh{\xi}_1\in p^{\lfloor b/2\rfloor}\ocal, \text{ and }
\pi_{p^{\lfloor b/2\rfloor}}(R \cap p^{4ar'}\ocal)=\pi_{p^{\lfloor b/2\rfloor}} (p^{ar'}\ocal_{0,u}[\xi]\cap p^{4ar'}\ocal),
\]
where as before $\ocal_{0,u}=\bbz_p[\theta(\f_0^{\times})]$ and $r':=e/r$. Moreover $\bbq_p[\theta(\f_0^{\times})][\xi]$ is a totally ramified extension of $\bbq_p[\theta(\f_0^{\times})]$ and $[\bbq_p[\theta(\f_0^{\times})][\xi]:\bbq_p[\theta(\f_0^{\times})]]=r'$.

{\em Proof of Claim 4.} We know that $v_{\pfr}(\xi^{r'})=v_{\pfr}(p^{3ar'}\wh{\xi}_1^{r'})=4aer'+e>v_{\pfr}(p^{4ar'})$. And so by Claim 3, there are $c_0,\ldots,c_{r'-1}\in \ocal_{0,u}$ such that 
\be\label{eq-linear-combination-lower-degree-terms}
p^{3ar'} \wh{\xi}_1^{r'}+c_{r'-1}\wh{\xi}_1^{r'-1}+\cdots+c_1\wh{\xi}_1+c_0\in p^{b'}\ocal,
\ee
where $b':=b-2([\f_0:\f_p]-1)a$. Before we continue our analysis, we prove the following subclaim. 

{\bf Subclaim (a).}  Let $K_{0,u}:=\bbq_p[\theta(\f_0^{\times})]$. Then $K_{0,u}$ is an unramified extension of $\bbq_p$, its ring of integers is $\ocal_{0,u}:=\bbz_p[\theta(\f_0^{\times})]$, and its residue field is $\f_0$; in particular for any $c\in K_{0,u}^{\times}$ we have $v_{\pfr}(c)\in e\bbz$. 

{\em Proof of Subclaim (a).} Let $\ocal'_{0,u}$ be the ring of integers of $K_{0,u}$, and $\f_0'$ be its residue field. Then $\ocal_{0,u}:=\bbz_p[\theta(\f_0^{\times})]\subseteq \ocal'_{0,u}$. And so the $\f_0\subseteq \f_0'$. Let $\beta$ be a generator of $\f_0^{\times}$.  Since $\theta(\beta)$ is integral over $\bbz_p$, by Hensel's lemma we have $[\bbq_p[\theta(\beta)]:\bbq_p]=[\f_p[\beta]:\f_p]$; this means $[K_{0,u}:\bbq_p]=[\f_0:\f_p]$. But we know that the degree $[K_{0,u}:\bbq_p]$ of the field extension $K_{0,u}$ of $\bbq_p$ is equal to the product of its ramification index and its residue degree. So we conclude that $\f'_0=\f_0$ and $K_{0,u}$ is an unramified extension of $\bbq_p$. Since $\ocal_{0,u}$ is a complete subring of $\ocal'_{0,u}$ and $\pi_p(\ocal_{0,u})=\pi_p(\ocal'_{0,u})$, we deduce that $\ocal'_{0,u}=\ocal_{0,u}$. And the subclaim (a) follows. 

 Let $c_{r'}:=p^{3ar'}$ and $f(t):=\sum_{i=0}^{r'}c_it^i\in \ocal_{0,u}[t]$. Next we will find an upper bound for the $\pfr$-adic valuation $v_{\pfr}(f'(\wh{\xi}_1))$ of the value of the derivative $f'(t)$ of $f(t)$ at $t=\wh{\xi}_1$.
 
 {\bf Subclaim (b).} In the above setting $v_{\pfr}(f'(\wh{\xi}_1))\le 4er'a$.
 
 {\em Proof of Subclaim (b).} We have $f'(\wh{\xi}_1)=\sum_{i=1}^{r'} ic_i \wh{\xi}_1^{i-1}$. And by Subclaim (a) we have 
 \be\label{eq-p-adic-valution-terms-derivative}
 v_{\pfr}(ic_i \wh{\xi}_1^{i-1})\equiv r(i-1) \pmod e
 \ee
 for any integer $i$ in $[1,r']$. Since $0\le r(i-1)<e$ for $i\in [1,r']$, by \eqref{eq-p-adic-valution-terms-derivative} we have
 \be\label{eq-distinction-p-adic-valuation}
  v_{\pfr}(ic_i \wh{\xi}_1^{i-1})\neq  v_{\pfr}(jc_j \wh{\xi}_1^{j-1})
 \ee
 for distinct integers $i$ and $j$ in $[1,r']$. By \eqref{eq-distinction-p-adic-valuation} we deduce that
 \[
 v_{\pfr}(f'(\wh{\xi}_1))=\min_{1\le i\le r'} v_{\pfr}(ic_i \wh{\xi}_1^{i-1})\le v_{\pfr}(r'p^{3ar'}\wh{\xi}_1^{r'-1})\le 4er'a
 \]
 (to get the last assertion we are assuming $a\ge 3$); and the subclaim (b) follows.
 
 By \eqref{eq-linear-combination-lower-degree-terms} and Subclaim (b), we have that 
 \be\label{eq-comparison-f-derivative}
v_{\pfr}(f(\wh{\xi}_1))-2v_{\pfr}(f'(\wh{\xi}_1))\ge e(b'-4r'a)= e(b-2([\f_0:\f_p]-1+2r')a)\ge e(1-6[K:\bbq_p]/C)b\ge eb/2. 
 \ee  
 Hence by \cite[Chapter II, Section 2, Proposition 2]{Lang-AlgNum} and \eqref{eq-comparison-f-derivative}, there is $\xi\in \ocal$ such that 
 \be\label{eq-getting-zero-small-nbhd}
f(\xi)=0,\text{ and }  v_{\pfr}(\xi-\wh{\xi}_1)\ge v_{\pfr}(f(\wh{\xi}_1)/(f'(\wh{\xi}_1))^2)\ge eb/2.
 \ee
 Let $K_0:=K_{0,u}[\xi]$. Since $f(\xi)=0$ and $f(t)\in K_{0,u}[t]$, we have $[K_0:K_{0,u}]\le \deg f=r'$. By the second part of \eqref{eq-getting-zero-small-nbhd} we can deduce that $v_{\pfr}(\xi)=v_{\pfr}(\wh{\xi}_1)=ae+r$; and so $r\in v_{\pfr}(K_0^{\times})$. This and Subclaim (a) imply that the ramification index of the field extension $K_0$ over $K_{0,u}$ is at least $[r\bbz:e\bbz]=r'$. Since the index $[K_0:K_{0,u}]$ of the field extension $K_0$ over $K_{0,u}$ is equal to the product of the ramification index and the residue index of this field extension, by the above discussion we deduce that 
 \be\label{eq-totally-ramfied}
 [K_0:K_{0,u}]=r',\h K_0/K_{0,u} \text{ is a totally ramified extension, and } f(t) \text{ is irreducible in } K_{0,u}[t]. 
 \ee
 Since $\xi$ is integral over $\ocal_{0,u}$, $\ocal_{0,u}$ is integrally closed, and the degree of $\xi$ over $K_{0,u}$ is $r'$, we have that
 \be\label{eq-ring-generated}
 \ocal_{0,u}[\xi]=\ocal_{0,u}+\ocal_{0,u}\xi+\cdots+\ocal_{0,u}\xi^{r'-1}.
 \ee
 Hence we have 
 \begin{align*}
 \pi_{p^{\lfloor b/2\rfloor}}(R\cap p^{4ar'}\ocal)
&=\pi_{p^{\lfloor b/2\rfloor}}(p^{ar'}(\ocal_{0,u}+\ocal_{0,u}\wh{\xi}_1+\cdots+\ocal_{0,u}\wh{\xi}_1^{r'-1})\cap p^{4ar'}\ocal)
&\text{ (by Claim 3) } 
 \\
 &=\pi_{p^{\lfloor b/2\rfloor}}(p^{ar'}(\ocal_{0,u}+\ocal_{0,u}\xi+\cdots+\ocal_{0,u}\xi^{r'-1})\cap p^{4ar'}\ocal).
 & \text{ (by \eqref{eq-getting-zero-small-nbhd}) }
 \\
 &=\pi_{p^{\lfloor b/2\rfloor}}(p^{ar'}\ocal_{0,u}[\xi]\cap p^{4ar'}\ocal)
 & \text{ (by \eqref{eq-ring-generated}) }
 \end{align*}
 QED.
 
 {\bf Claim 5.} Suppose $a,b,r$ are as in \eqref{eq-at-the-level-of-grading} for a constant $C\ge 12[K:\bbq_p]$. Suppose $\xi$ is as in Claim 4, and let $K_{0,u}:=\bbq_p[\theta(\f_0^{\times})]$ and $K_0:=K_{0,u}[\xi]$. Let $\ocal_0$ be the ring of integers of $K_0$. Then 
 \[
 \pi_{p^{\lfloor b/2 \rfloor}}(R\cap p^{4ar'}\ocal)= \pi_{p^{\lfloor b/2 \rfloor}}(\ocal_0\cap  p^{4ar'}\ocal).
 \] 
 
 {\em Proof of Claim 5.} Suppose $x\in \ocal_0\cap p^{4ar'}\ocal$. Then there are $c'_i\in K_{0,u}$ such that $x=\sum_{i=0}^{r'-1}c'_i\xi^i$. By Subclaim (a) (of Claim 4), we have that $v_{\pfr}(K_{0,u}^{\times})=e\bbz$. And, by Claim 4, $v_{\pfr}(\xi)=ae+r$. Hence we have that 
 \be\label{eq-valuation-linear-combination-1}
 v_{\pfr}(c_i'\xi^{i})\equiv ri \pmod{e}.
 \ee 
 By \eqref{eq-valuation-linear-combination-1} and the fact that $0\le ri<e$ for $i\in [0,r'-1]$, we deduce that $v_{\pfr}(c_i'\xi^i)$'s are pairwise distinct; and therefore
 \be\label{eq-valuation-linear-combination-2}
 5er'a\le v_{\pfr}(x)=v_{\pfr}\Big(\sum_{i=0}^{r'-1}c_i'\xi^i\Big)=\min_{0\le i\le r'-1}\{v_{\pfr}(c_i')+i(ae+r)\}.
 \ee
 Hence for any integer $i$ in $[0,r'-1]$ we have
 \[ 
 v_{\pfr}(c_i')\in [4er'a-eia-ir,\infty)\cap e\bbz=[4er'a-eia,\infty)\cap e\bbz
 \]
 which implies $c_i'':=c_i'/p^{3ar'}\in \ocal_{0,u}$. Hence by \eqref{eq-ring-generated} we have  
 \be\label{eq-describing-ring-integers}
x=p^{3ar'}\sum_{i=0}^{r'-1}c_i''\xi^i\in p^{3ar'}\ocal_{0,u}[\xi].
 \ee
 Therefore by \eqref{eq-describing-ring-integers} and Claim 4, we have $\pi_{p^{\lfloor b/2 \rfloor}}(x)\in \pi_{p^{\lfloor b/2 \rfloor}}(p^{ar'}\ocal_{0,u}[\xi]\cap p^{4ar'}\ocal)=
 		\pi_{p^{\lfloor b/2 \rfloor}}(R\cap p^{4ar'}\ocal)$. And so
 \be\label{eq-one-side-inclusion}
 \pi_{p^{\lfloor b/2 \rfloor}}(\ocal_0\cap p^{4ar'}\ocal)\subseteq \pi_{p^{\lfloor b/2 \rfloor}}(R\cap p^{4ar'}\ocal).
 \ee
 On the other hand, by Claim 4 we have
 \be\label{eq-the-other-side}
 \pi_{p^{\lfloor b/2 \rfloor}}(R\cap p^{4ar'}\ocal)=\pi_{p^{\lfloor b/2 \rfloor}}(p^{ar'}\ocal_{0,u}[\xi]\cap p^{4ar'}\ocal)\subseteq \pi_{p^{\lfloor b/2 \rfloor}}(\ocal_0\cap p^{4ar'}\ocal);
 \ee
 and by \eqref{eq-one-side-inclusion} and \eqref{eq-the-other-side} claim follows. QED.
 
{\bf Claim 6.} Suppose $R$ is a subring of $\ocal$, and for positive integers $a$ and $b$ we have $a\le b$ and $\pi_{p^b}(R\cap p^a\ocal)=\pi_{b^b}(R\cap p^a\ocal_0)$ where $\ocal_0$ is the ring of integers of a subfield $K_0$ of $K$. Then $\pi_{p^{b-a}}(R)\subseteq \pi_{p^{b-a}}(\ocal_0)$.

{\em Proof of Claim 6.} For any $x\in R$, there is $x'\in \ocal_0$ such that 
$p^ax-x'\in p^b\ocal$. And so $x-p^{-a}x'\in p^{b-a}\ocal$, which implies $p^{-a}x'\in K_0\cap \ocal=\ocal_0$. Therefore $\pi_{p^{b-a}}(x)\in \pi_{p^{b-a}}(\ocal_0)$; and the claim follows. QED. 
 
Claim 5 and Claim 6 imply the assertion of Theorem~\ref{t:SubringOfO}. 
\end{proof}

\subsection{Subrings with the same graded structure as the ring $\ocal_0$ of integers of a subfield.} The main goal of this section is to show knowing ${\rm gr}_{p}(R;\ocal)={\rm gr}_{p}(\ocal_0;\ocal)$ and a bit more information is enough to deduce that $R=\ocal_0$. 
\begin{prop}\label{prop:detecting-ring-of-integers-via-grading-structure}
	Suppose $R$ is a subring of the ring $\ocal$ of integers of a finite extension $K$ of $\bbq_p$. Suppose $N$ is a positive integer which is at least $6[K:\bbq_p]$. Suppose $\ocal_0$ is the ring of integers of a closed subfield $K_0$ of $K$. If $\pi_{p^{6[K:\bbq_p]}}(R)=\pi_{p^{6[K:\bbq_p]}}(\ocal_0)$ and ${\rm gr}_{i,p}(R;\ocal)={\rm gr}_{i,p}(\ocal_0;\ocal)$ for any integer $i$ in $[0,N-1]$, then $\pi_{p^{N-4}}(R)=\pi_{p^{N-4}}(\ocal_0)$.
\end{prop}
\begin{proof}
{\bf Step 1.} (Getting the unramified part)  First we notice that \eqref{eq-at-the-level-of-grading} holds for $a=0$ and $b=N$. Let $\f_0$ and $\f$ be the residue fields of $\ocal_0$ and $\ocal$, respectively. Let $\pfr$ be a uniformizing element of $\ocal$. As  before, let $\theta:\f^{\times}\rightarrow \ocal^{\times}$ be the group embedding such that $\pi_{\pfr}(\theta(x))=x$ for any $x\in \f^{\times}$. So by Claim 2 in the proof of Theorem~\ref{t:SubringOfO}, we have that there is $\wh{s}:\f_0^{\times}\rightarrow R$ such that $\wh{s}(x)-\theta(x)\in p^N\ocal$ for any $x\in \f_0^{\times}$ (we extend the domain of $\wh{s}$ to $\f$ by setting $\wh{s}(0):=0$). This implies that
	$
	\pi_{p^N}(\theta(\f_0^{\times}))\subseteq \pi_{p^N}(R), 
	$
	and so 
	\be\label{eq:the-unramified-part-grading-recognition}
	\pi_{p^N}(\bbz_p[\theta(\f_0^{\times})])\subseteq \pi_{p^N}(R).
	\ee
	Notice that $K_{0,u}:=\bbq_p[\theta(\f_0^{\times})]$ is a subfield of $K_0$, $K_{0,u}$ is an unramified extension of $\bbq_p$, and $K_0$ is a totally ramified extension of $K_{0,u}$. And the ring $\ocal_{0,u}$ of integers of $K_{0,u}$ is $\bbz_p[\theta(\f_0^{\times})]$.

{\bf Step 2.} (Describing elements of $R$)	
	Suppose $\pfr_0$ be a uniformizing element of $\ocal_0$. Let $e_0:=v_{\pfr}(\pfr_0)$ and $d_0:=e/e_0$. Since $\pi_{p^{6[K:\bbq_p]}}(R)=\pi_{p^{6[K:\bbq_p]}}(\ocal_0)$, there is $\xi_0\in R$ such that 
	\be\label{eq-choice-of-xi}
	\xi_0-\pfr_0\in p^{6[K:\bbq_p]}\ocal.
	\ee
	And so we have 
	\begin{align}\label{eq-level-p}
	\notag
	\pi_p(\wh{s}(\f_0)+\wh{s}(\f_0)\xi_0+\cdots+\wh{s}(\f_0)\xi_0^{d_0-1})&
	=\f_0+\f_0\pi_p(\pfr_0)+\cdots+\f_0\pi_p(\pfr_0^{d_0-1})\\
	&
	=\pi_p(\theta(\f_0)+\theta(\f_0)\pfr_0+\cdots+\theta(\f_0)\pfr_0^{d_0-1})\\
	\notag
	&
	=\pi_p(\ocal_0)=\pi_p(R).
	\end{align}
	On the other hand, $|{\rm gr}_{i,p}(R;\ocal)|=|{\rm gr}_{i,p}(\ocal_0;\ocal)|=|\pi_p(\ocal_0)|$ for any integer $i$ in $[0,N]$. So by Corollary~\ref{cor-grade-shift} and \eqref{eq-level-p}, we deduce that
	\be\label{eq-representatives-for-levels}
	{\rm gr}_{i,p}(R;\ocal)=\pi_{p^{i+1}}(\wh{s}(\f_0)p^i+\wh{s}(\f_0)p^i\xi_0+\cdots+\wh{s}(\f_0)p^i\xi_0^{d_0-1}).
	\ee 
	Since $\wh{s}(\f_0)\subseteq R$, $\xi_0\in R$, and $p\in R$, by \eqref{eq-representatives-for-levels} we get that
	\be\label{eq-representing-elements-of-R}
	\pi_{p^{N}}(R)=\pi_{p^{N}}\left(\sum_{i=1}^{N}\sum_{j=0}^{d_0-1}\wh{s}(\f_0)p^i\xi_0^j\right).
	\ee

{\bf Step 3.} (Finding a zero of a degree $d_0$ polynomial close to $\xi_0$) Since $\xi_0^{d_0}$ is in $R$, by \eqref{eq-representing-elements-of-R} there is a monic polynomial $f_0(x):=\sum_{i=0}^{d_0}c_ix^i\in \ocal_{0,u}[x]$ of degree $d_0$ such that 
\be\label{eq-value-of-polynomial-very-small}
f_0(\xi_0)\in p^{N}\ocal.
\ee
	As $v_{\pfr}(\xi_0)=v_{\pfr}(\pfr_0)=e_0$ and $v_{\pfr}(\ocal_{0,u}\setminus \{0\})\subseteq e\bbz$, we get that for any integer $i$ in $[1,d_0]$
	\[
	v_{\pfr}(ic_i\xi_0^{i-1})\equiv (i-1)e_0 \pmod e;
	\]
	and so $v_{\pfr}(ic_i\xi_0^{i-1})$ are distinct integers. Hence 
	\be\label{eq-controlling-derivative}
	v_{\pfr}(f_0'(\xi_0))=\min_{1\le i\le d_0} v_{\pfr}(ic_i\xi_0^{i-1}) 
	\le v_{\pfr}(d_0\xi_0^{d_0-1})\le v_{\pfr}(d_0)+(d_0-1)e_0< 2e.	
	\ee	
	By \cite[Chapter II, Section 2, Proposition 2]{Lang-AlgNum}, \eqref{eq-value-of-polynomial-very-small}, and \eqref{eq-controlling-derivative}, there is a zero $\pfr_0'\in \ocal$ of $f_0(x)$ such that 
	\be\label{eq-close-to-a-zero}
	v_{\pfr}(\xi_0-\pfr_0')\ge v_{\pfr}(f_0(\xi_0))-2v_{\pfr}(f_0'(\xi_0))> (N-4)e.
	\ee

{\bf Step 4.} (Showing that $\pfr_0'$ is in $K_0$) By \eqref{eq-choice-of-xi}, \eqref{eq-value-of-polynomial-very-small}, and $f_0(x)\in \ocal_{0,u}[x]$, we deduce that 
	\be\label{eq-small-value-at-p0}
	v_{\pfr}(f_0(\pfr_0))\ge 6[K:\bbq_p].
	\ee	
	By \eqref{eq-choice-of-xi}, \eqref{eq-controlling-derivative}, and $f_0(x)\in \ocal_{0,u}[x]$, we deduce that 
	\be\label{eq-large-derivative-at-p0}
	v_{\pfr}(f_0'(\pfr_0))<2e.
	\ee
	Hence again by \cite[Chapter II, Section 2, Proposition 2]{Lang-AlgNum}, \eqref{eq-small-value-at-p0}, and \eqref{eq-large-derivative-at-p0}, there is a zero $\pfr_0''\in \ocal$ of $f_0(x)$ such that
		\be\label{eq-p0-close-to-a-zero}
	v_{\pfr}(\pfr_0-\pfr_0'')\ge v_{\pfr}(f_0(\pfr_0))-2v_{\pfr}(f_0'(\pfr_0))> (6[K:\bbq_p]-4)e.
		\ee
	By \eqref{eq-close-to-a-zero} and \eqref{eq-p0-close-to-a-zero}, we have 
	\be\label{eq-two-zeros-are-close}
	v_{\pfr}(\pfr_0'-\pfr_0'') \ge (6[k:\bbq_p]-4)e.
	\ee
	Hence, if $\pfr_0'\neq \pfr_0''$, we get 
	\[
	(6[k:\bbq_p]-4)e\le v_{\pfr}(\pfr_0'-\pfr_0'') \le v_{\pfr}(f_0'(\pfr_0))<2e,
	\]
	which is a contradiction. Hence $\pfr_0'=\pfr_0''$. Let $L$ be the splitting field of $f_0(x)$ over $K_{0,u}$, and let $|\cdot|$ be the unique extension of $|\cdot|_{\pfr}$ to $L$. Then for any root $\beta$ of $f_0(x)$ in $L$ that is not equal to $\pfr_0'$, by \eqref{eq-large-derivative-at-p0}, \eqref{eq-p0-close-to-a-zero}, and $\pfr_0'=\pfr_0''$ 	we have 
	\be\label{eq-away-from-other-zeros}
	|\beta-\pfr_0'|\ge |f'_0(\pfr_0')|> |\f|^{-2e}\ge |\f|^{-(6[K:\bbq_p]-4)e}>|\pfr_0-\pfr_0'|. 
	\ee
Hence by Krasner's lemma (for instance see \cite[Chapter II, Section 2, Proposition 3]{Lang-AlgNum}) we have
\[
\pfr_0'\in K_{0,u}[\pfr_0]=K_0, \text{ and } v_{\pfr}(\pfr_0-\pfr_0')>(6[K:\bbq_p]-4)e.
\]
{\bf Step 5.} (Arguing why $\ocal_0=\ocal_{0,u}[\pfr_0']$) By Step 4, we know that $v_{\pfr}(\pfr_0)=v_{\pfr}(\pfr_0')$ and $\pfr_0'\in K_0$. Hence $\pfr_0'$ is a uniformizing element of $K_0$. As $K_0$ is a purely ramified extension of $K_{0,u}$, we have that $\ocal_0=\ocal_{0,u}[\pfr_0']$. 

{\bf Step 6.} (Finishing proof) By \eqref{eq-representing-elements-of-R} (see Step 1), \eqref{eq-close-to-a-zero} (see Step 3), and Step 5, we have 
\[
\pi_{p^{N-4}}(R)=\pi_{p^{N-4}}\left(\sum_{i=1}^{N}\sum_{j=0}^{d_0-1}\wh{s}(\f_0)p^i\pfr_0'^j\right)=\pi_{p^{N-4}}(\ocal_0).
\]	
\end{proof}

\section{Scalar-Sum-Product phenomena.}~\label{s:LinearCombination}
In this section, using conditional (Shannon) entropy we study {\em scalar-sum-product properties} of ring of integers $\ocal$ of a finite extension $K$ of $\bbq_p$. 

\subsection{Scalar-Sum inequality for regular sets.} 
The main goal of this section is to prove Proposition~\ref{p:ScalarSumRegular}.
\begin{prop}[Scalar-Sum inequality for regular sets]\label{p:ScalarSumRegular}
Let $K$ be a finite extension of $\bbq_p$, $\ocal$ be its ring of integers, and $\f$ be its residue field. Let $\Omega\subseteq \ocal$ be such that $\pi_{\pfr}$ induces a bijection between $\Omega$ and $\f_{\pfr}^{\times}$.  

Let $A$ and $B$ be $(m_0,\ldots,m_{N-1})$-regular and $(l_0,\ldots,l_{N-1})$-regular\footnote{For the definition of a regular set, see Definition~\ref{d:RegularSubset}.}. subsets of $\pi_{\pfr^N}(\ocal)$, respectively. Then
\[
\max_{\omega\in \Omega} |A+\pi_{\pfr^N}(\omega)B|\ge \prod_{i=0}^{N-1}\max\left(1,\left(\frac{1}{m_i l_i}+\frac{1}{|\f|}\right)^{-1}\right).
\]
\end{prop}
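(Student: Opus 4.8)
The plan is to prove the estimate one scale at a time, with the scalar averaged over $\Omega$ instead of fixed. For $\omega\in\Omega$ write $\bar\omega:=\pi_{\pfr^N}(\omega)$, and for $0\le j\le N$ set
\[
p^{(j)}_\omega(c):=\Pr_{a\in A,\ b\in B}\big[\pi_{\pfr^j}(a+\bar\omega b)=c\big],\qquad
Q^{(j)}_\omega:=\sum_{c\in\ocal/\pfr^j}p^{(j)}_\omega(c)^2,
\]
with $a,b$ independent and uniform. Then $Q^{(0)}_\omega=1$, Cauchy--Schwarz gives $|A+\bar\omega B|\ge 1/Q^{(N)}_\omega$, each scale-$j$ collision ratio $Q^{(j+1)}_\omega/Q^{(j)}_\omega$ lies in $(0,1]$, and $Q^{(N)}_\omega=\prod_{j=0}^{N-1}Q^{(j+1)}_\omega/Q^{(j)}_\omega$. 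Since $\max_{\omega\in\Omega}|A+\bar\omega B|\ge 1/\min_{\omega}Q^{(N)}_\omega$ and $\min_{\omega}$ of a product of positive reals is at most its geometric mean over $\Omega$, which by AM--GM is at most $\prod_j\big(\tfrac1{|\Omega|}\sum_\omega Q^{(j+1)}_\omega/Q^{(j)}_\omega\big)$, everything comes down to the per-scale bound
\[
\frac1{|\Omega|}\sum_{\omega\in\Omega}\frac{Q^{(j+1)}_\omega}{Q^{(j)}_\omega}\ \le\ \frac1{m_jl_j}+\frac1{|\f|}\qquad(0\le j\le N-1),
\]
the trailing ``$\max(1,\cdot)$'' being supplied by the bound $Q^{(j+1)}_\omega/Q^{(j)}_\omega\le1$. (I would note in passing that the more obvious induction on $N$, bounding $|A+\bar\omega B|$ fibrewise over $\pi_\pfr(A+\bar\omega B)$, does not work, since it leaves a \emph{minimum} over the fibres of a one-scale quantity and no single scalar need be optimal on all fibres at once; averaging over $\Omega$ is exactly what cures this.)

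The tool I would use for the per-scale bound is a one-scale sum--product estimate: for $S,T\subseteq\f$ with $|S|=m$, $|T|=l$,
\[
\frac1{|\f|-1}\sum_{\omega\in\f^\times}\frac{E_\omega(S,T)}{(ml)^2}\ \le\ \frac1{ml}+\frac1{|\f|},\qquad
E_\omega(S,T):=\#\{(s_1,s_2,t_1,t_2)\in S^2\times T^2:\ s_1+\omega t_1=s_2+\omega t_2\},
\]
which in particular gives $\max_{\omega\in\f^\times}|S+\omega T|\ge(\tfrac1{ml}+\tfrac1{|\f|})^{-1}$ by Cauchy--Schwarz, hence the case $N=1$ of the proposition. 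To prove it I would sum $E_\omega(S,T)$ over $\omega\in\f^\times$: when $t_1\ne t_2$ the scalar $\omega=(s_1-s_2)/(t_2-t_1)$ is uniquely determined in $\f$ and lies in $\f^\times$ iff $s_1\ne s_2$, while $t_1=t_2$ forces $s_1=s_2$ and then every $\omega$ occurs, giving
\[
\sum_{\omega\in\f^\times}E_\omega(S,T)=ml(|\f|-1)+m(m-1)l(l-1);
\]
the claimed bound then reduces to $ml\le|\f|(m+l-1)$, which holds because $m\le|\f|$.

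The last and hardest step is to bridge these two, i.e.\ to derive the per-scale bound from the one-scale estimate. Using regularity, a uniform element of $A$ above a fixed $\pfr^j$-prefix has $j$-th digit uniform in some $S\subseteq\f$ with $|S|=m_j$, and likewise $B$ yields $T\subseteq\f$ with $|T|=l_j$; after choosing reference lifts, the event that two independent copies of $a+\bar\omega b$ still agree modulo $\pfr^{j+1}$, given they agree modulo $\pfr^j$, becomes $\alpha_1-\alpha_2+\pi_\pfr(\omega)(\beta_1-\beta_2)=-\mu$ with $\alpha_i\in S$, $\beta_i\in T$ uniform, where $\mu\in\f$ is a ``shift'' depending on the two $\pfr^j$-prefixes and on $\omega$ but not on the new digits $\alpha_i,\beta_i$ (a shift is forced because agreement modulo $\pfr^j$ permits cancellation among lower digits, so the two elements of $B$ need not share a $\pfr^j$-prefix). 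Writing $r_{S-S}(x):=\#\{(s_1,s_2)\in S^2:s_1-s_2=x\}$ and splitting on whether $\beta_1=\beta_2$, the diagonal part is $\tfrac1{l_j}r_{S-S}(-\mu)/m_j^2\le 1/(m_jl_j)$ regardless of $\mu$, since a difference set represents any value at most $|S|$ times; the off-diagonal part is $\tfrac1{m_j^2l_j^2}\sum_{u\ne0}r_{T-T}(u)\,r_{S-S}(-\mu-\pi_\pfr(\omega)u)$, whose average over $\omega\in\Omega$ I would bound exactly as in the one-scale lemma, the key genericity being that $\omega\mapsto\pi_{\pfr^N}(\omega)v$ is injective on $\Omega$ for every nonzero $v\in\ocal/\pfr^N$ (two distinct elements of $\Omega$ differ by a unit). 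I expect the two real difficulties to be: (a) a given $\pfr^j$-prefix of $A+\bar\omega B$ is in general hit by several pairs of prefixes of $A$ and $\bar\omega B$, so the scale-$j$ digit distribution is a \emph{mixture}, whose $\ell^2$-energy must be controlled via convexity of $\|\cdot\|_2^2$ while carefully tracking the weights; and (b) the shift $\mu$ moves with $\omega$, so the off-diagonal averaging has to be carried out with $\mu$ varying. In both it is the regularity of $A$ and $B$ that makes all the difference-set and fibre counts exact, so that the diagonal contribution is exactly $1/(m_jl_j)$ and the off-diagonal one, after a short monotonicity check in $r_{S-S}(-\mu)\in[0,m_j]$, at most $1/|\f|$.
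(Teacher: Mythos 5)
Your high-level reduction (min is at most geometric mean is at most the product of scale-wise arithmetic means over $\Omega$), the Cauchy--Schwarz step, and your one-scale lemma are all correct; indeed your one-scale lemma \emph{is} the paper's Lemma~\ref{l:AverageL2norm}, and your proof of it matches. The gap is in the ``hardest step'' you flag: the per-scale bound
\[
\frac1{|\Omega|}\sum_{\omega\in\Omega}\frac{Q^{(j+1)}_\omega}{Q^{(j)}_\omega}\ \le\ \frac1{m_jl_j}+\frac1{|\f|}
\]
does not follow from the one-scale lemma by the route you sketch, and there is no evident reason it holds with the exact constant you need (and you do need it exactly, since an error compounds multiplicatively over $N$ scales). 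The difficulty is not the carry shift $\mu$ --- that is harmless because $\|\cdot\|_2^2$ is translation-invariant --- but precisely the ``weights'' you mention in (a). Unfold the ratio: $Q^{(j+1)}_\omega/Q^{(j)}_\omega$ is the probability of a level-$(j+1)$ collision conditioned on a level-$j$ collision, and conditioning on the level-$j$ collision places the uniform distribution on the set of prefix quadruples $(\bar a_1,\bar b_1,\bar a_2,\bar b_2)$ solving $\bar a_1+\pi_{\pfr^j}(\omega)\bar b_1=\bar a_2+\pi_{\pfr^j}(\omega)\bar b_2$, a set that \emph{depends on $\omega$}. So the ratio is an average of per-quadruple conditional probabilities with $\omega$-dependent weights, and you cannot push $\frac1{|\Omega|}\sum_\omega$ through to the per-quadruple level. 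The only quadruples that solve the level-$j$ equation for every $\omega$ are the diagonal ones $\bar a_1=\bar a_2,\ \bar b_1=\bar b_2$; on those the weights become the uniform measure on $\pi_{\pfr^j}(A)\times\pi_{\pfr^j}(B)$ and your one-scale lemma closes the estimate. But off-diagonal solution quadruples are $\omega$-specific, can be numerous for particular scalars, and can carry a large conditional collision probability for exactly those scalars, so the average over $\omega$ can exceed your target. In short, the $\ell^2$ collision probability does not obey a clean chain rule with $\omega$-independent weights.

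The paper sidesteps this by decomposing with \emph{entropy} rather than with the $\ell^2$ ratio. The chain rule $H(\pcal_A\ast\psi_{0,N}(\alpha)\pcal_B)=\sum_kH(\,\cdot\,;\bcal_{k+1}\mid\bcal_k)$ writes the conditional entropy at level $k$ as a sum over level-$k$ prefixes $\bar c$ weighted by $\pi_{\pfr^k}[\,\cdot\,](\bar c)$ --- \emph{linearly}, not quadratically. Combining this with the mixture formula of Lemma~\ref{l:ConditionalMeasureOfConvolution}, concavity of $H$ (Lemma~\ref{l:PropertiesEntropy}(2)), translation-invariance of $H$, and regularity collapses each scale to a uniform, $\omega$-independent average over all pairs $(\bar a,\bar b)\in\pi_{\pfr^k}(A)\times\pi_{\pfr^k}(B)$. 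Only then is $H\ge-\log\|\cdot\|_2^2$ applied at the leaves, followed by Jensen for $-\log$, so that Lemma~\ref{l:AverageL2norm} --- your one-scale estimate --- finishes. So your $\ell^2$ averaging lemma is indeed the right engine, but it has to be wrapped in the additive entropy framework; the multiplicative $\ell^2$-ratio framework does not keep the cross-scale bookkeeping honest.
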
   

Let us fix a subset $\Omega\subseteq \ocal^{\times}$ such that $\pi_{\pfr}$ induces a bijection between $\Omega$ and $\f^{\times}$. As it was mentioned in Section~\ref{ss:outline}, for any element $X\in \ocal$, there are unique $D_{i,\Omega}(X)\in \Omega\cup \{0\}$ such that 
\[
X=D_{0,\Omega}(X)+\pfr D_{1,\Omega}(X) +\pfr^2 D_{2,\Omega}(X) +\cdots;  
\]
and we call $D_{i,\Omega}(X)$ the $i$-th $\pfr$-adic digit with respect to $\Omega$. We fix $\Omega$ at the beginning of each given proof and write $D_i(X)$ instead of $D_{i,\Omega}(X)$. Again as explained in Section~\ref{ss:outline}, we can and will talk about the $i$-th $\pfr$-adic digit of an element $X$ of $\pi_{\pfr^N}(\ocal)$ for any integer $i$ in $[0,N-1]$; and we have 
\[
X=D_{0,\Omega}(X)+\pfr D_{1,\Omega}(X) +\cdots  +\pfr^{N-1} D_{N-1,\Omega}(X)+\pfr^N\ocal.
\]

\begin{definition}\label{d:RegularSubset}
A subset $A$ of $\pi_{\pfr^N}(\ocal)$ is called an $(m_0,m_1, \ldots , m_{N-1})$-regular subset if for any $0\le n\le N-1$ and $\bar{x}:=\pfr^n\ocal+x$ we have that either $\bar{x}\not\in\pi_{\pfr^{n}}(A)$ or 
\[
|\pi_{\pfr^{n+1}}(A)\cap \pi_{\pfr^{n+1}}(\pfr^n\ocal+x)|=m_n.
\]
\end{definition}
The following Lemma gives us a good way of thinking about regular subsets. 
\begin{lem}\label{l:ConditionalMeasuresRegular}
Let $A$ be an $(m_0,\ldots, m_{N-1})$-regular subset of $\pi_{\pfr^N}(\ocal)$. Let $X$ be a random variable with respect to the probability counting measure on $A$. Then 
\begin{enumerate}
\item $\pi_{\pfr^k}(X)$ is a random variable with respect to the probability counting measure on $\pi_{\pfr^k}(A)$; and
\item for any $a\in A$ the conditional probability measure 
\[ P(D_k(X)| D_0(X)=D_0(a),\ldots,D_{k-1}(X)=D_{k-1}(a))\] is a probability counting measure on a set of size $m_k$.
\end{enumerate}
\end{lem}
\begin{proof}
Both of the above claims are easy consequences of the fact that $A$ is a regular set.
\end{proof}
As explained in Section~\ref{ss:outline} (Step 2), to prove Proposition~\ref{p:ScalarSumRegular}, we work with random variables $X$ and $Y$ that are distributed according to the probability counting measures on the sets $A$ and $B$, respectively. And we use basic properties of (Shannon) entropy and conditional entropy. Here we recall their definitions and basic properties.

\begin{definition}\label{d:Entropy}
Let $X$ be a random variable on a finite set $\mathcal{X}$.
\begin{enumerate}
	\item  The (Shannon) entropy $H(X)$ of $X$ is 
\[
H(X):=\sum_{x\in \mathcal{X}} -\bbp(X=x) \log \bbp(X=x),
\]
where $\bbp(X=x)$ is the probability of having $X=x$.
  \item Suppose $Y$ is another random variable on $\mathcal{X}$. Then the entropy of $X$ conditioned to $Y$ is 
\[
H(X|Y):=\sum_{y\in \mathcal{X}} \bbp(Y=y) H(X|Y=y)=-\sum_{y\in \mathcal{X}} \bbp(Y=y) \sum_{x\in \mathcal{X}} \bbp(X=x|Y=y) \log \bbp(X=x|Y=y),
\] 
where $X|Y=y$ is the random variable $X$ conditioned to the random variable $Y$ taking a certain value $y$, and $\bbp(X=x|Y=y)$ is the probability of having $X=x$ conditioned to $Y=y$. 
\end{enumerate}
\end{definition}
Here are some of the basic properties of entropy that will be used in this note. 
\begin{lem}\label{l:PropertiesEntropy}
Suppose $\mathcal{X}$ is a finite set, and $X$ and $Y$ are random variables with values in $\mathcal{X}$. Then
\begin{enumerate}
	\item $H(X,Y)=H(X)+H(Y|X)$.
	\item $H(X)\ge H(X|Y)$.
	\item $H(f(X)|X)=0$ where $f$ is a function; and so $H(Y|X,f(X))=H(Y|X)$.
  \item Let $H_2(X):=-\log \sum_{x\in \mathcal{X}}\bbp(X=x)^2$; this is called the R\'{e}nyi entropy. Let $H_0(X):=\log |X|$, where $|X|$ is the size of the support of $X$. Then
  \[
  H_2(X)\le H(X) \le H_0(X).
  \] 
  \item $H(X|f(Y))\ge H(X|Y)$ where $f$ is a function.
\end{enumerate}
\end{lem}
\begin{proof}
These are all well-known facts; for instance for parts (a)-(d) see \cite[Theorem 2.4.1, Theorem 2.5.1, Theorem 2.6.4, Lemma 2.10.1, Problem 2.1]{book-entropy}. Part (e) is a consequence of parts (b) and (c): 
\[
H(X|f(Y))\ge H(X|f(Y),Y)=H(X|Y).
\]
\end{proof}
\begin{lem}\label{le:entropy-m-digit}
Suppose $\ocal$ is the ring of integers of a finite extension $K$ of $\bbq_p$. Let $X$ and $Y$ be two random variables with values in $\pi_{\pfr^N}(\ocal)$. Suppose $\Omega$ is a subset of $\ocal$ such that $\pi_{\pfr}$ induces a bijection from $\Omega$ to the set $\f^{\times}$ of non-zero elements of the residue field $\f$. Let $\alpha$ be a random variable with respect to the probability counting measure on $\Omega$; and $\overline{\alpha}:=\pi_{\pfr^N}(\alpha)$. Then for any integer $m$ in $[0,N-1]$ we have
\[
H(D_m(X+\overline{\alpha} Y)|\alpha, D_0(X+\overline{\alpha} Y),\ldots, D_{m-1}(X+\overline{\alpha} Y)) 
\hspace{8cm}
\]
\[
\hspace{3cm}
\ge 
H(\pi_{\pfr}(D_m(X))+\pi_{\pfr}(\alpha)\pi_{\pfr}(D_m(Y))|\pi_{\pfr}(\alpha), D_0(X),\ldots,D_{m-1}(X),D_0(Y),\ldots,D_{m-1}(Y)),
\]    	
where $D_i(Z):=D_{i,\Omega}(Z)$ is the $i$-th $\pfr$-adic digit of $Z$ with respect to $\Omega\cup\{0\}$.
\end{lem}
\begin{proof}
For a fixed $\alpha$, let $\sigma_{m,\alpha}:((\Omega\cup \{0\})\times (\Omega\cup \{0\}))^{m}\rightarrow \Omega\cup\{0\}$ be the following {\em carry over} function: 
\be\label{eq:carry-over-function}
\text{for any $a_0,\ldots,a_{m-1},b_0,\ldots,b_{m-1}\in \Omega\cup\{0\}$, }
\sigma_{m,\alpha}(\{(a_i,b_i)\}_{i=0}^{m-1}):=D_m\big(\sum_{i=0}^{m-1}a_i\pfr^i +\alpha\sum_{i=0}^{m-1}b_i\pfr^i\big).
\ee
Let $X_{m-1}:=\sum_{i=0}^{m-1}D_i(X)\pfr^i$ and $Y_{m-1}:=\sum_{i=0}^{m-1}D_i(Y)\pfr^i$. Then

\begin{align*}
	X+\overline{\alpha} Y 
	&
	\equiv (X_{m-1}+D_m(X)\pfr^m)+\overline{\alpha}(Y_{m-1}+D_m(Y)\pfr^m)
	\equiv (X_{m-1}+\overline{\alpha} Y_{m-1})+ (D_m(X)+\overline{\alpha}D_m(Y))\pfr^m
	\\
	&\equiv \sum_{i=0}^{m-1}D_i(X_{m-1}+\overline{\alpha} Y_{m-1})\pfr^i
	+(\sigma_{m,\alpha}(\{(D_i(X),D_i(Y))\}_{i=0}^{m-1})+
	D_m(X)+\overline{\alpha}D_m(Y))\pfr^m 
	\pmod{\pfr^{m+1}}.
\end{align*}
Therefore $D_m(X+\overline{\alpha}Y)$ is uniquely determined by 
\[
\pi_{\pfr}(\sigma_{m,\alpha}(\{(D_i(X),D_i(Y))\}_{i=0}^{m-1})+
	D_m(X)+\overline{\alpha}D_m(Y));
\]
and vice versa. Hence we have 
\[
	H_m:=H(D_m(X+\overline{\alpha} Y)|\alpha, D_0(X+\overline{\alpha} Y),
	\ldots, D_{m-1}(X+\overline{\alpha} Y)) 
\hspace{6.5 cm}
\]
\[
	=H(\pi_{\pfr}(\sigma_{m,\alpha}(\{(D_i(X),D_i(Y))\}_{i=0}^{m-1})+
	D_m(X)+\overline{\alpha}D_m(Y))|\alpha, D_0(X+\overline{\alpha} Y),\ldots, D_{m-1}(X+\overline{\alpha} Y)) 
\]
So by Lemma~\ref{l:PropertiesEntropy}, part (e), and the fact that the first $m-1$ $\pfr$-adic digits of $X+\overline{\alpha}Y$ are determined by $\{(D_i(X),D_i(Y)\}_{i=0}^{m-1}$ and $\alpha$, we have 
\[
H_m \ge H(\pi_{\pfr}(\sigma_{m,\alpha}(\{(D_i(X),D_i(Y))\}_{i=0}^{m-1})+
	D_m(X)+\overline{\alpha}D_m(Y))|\alpha, \{(D_i(X),D_i(Y)\}_{i=0}^{m-1}). 
\]
And since for a given $\alpha$ and $\{(D_i(X),D_i(Y)\}_{i=0}^{m-1}$, $\pi_{\pfr}(\sigma_{m,\alpha}(\{(D_i(X),D_i(Y))\}_{i=0}^{m-1})+
	D_m(X)+\overline{\alpha}D_m(Y))$ is uniquely determined by $\pi_{\pfr}(D_m(X)+\overline{\alpha}D_m(Y))$ and vice versa, we have 
\[
H_m\ge H(\pi_{\pfr}(D_m(X)+\overline{\alpha}D_m(Y))|\alpha, \{(D_i(X),D_i(Y)\}_{i=0}^{m-1}).
\]
And since $\pi_{\pfr}$ induces a bijection between $\Omega\cup\{0\}$ and $\f$, the claim follows.
\end{proof}
\begin{cor}\label{cor:cond-entropy-lin-com}
	Suppose $\ocal$ is the ring of integers of a finite extension $K$ of $\bbq_p$. Let $X$ and $Y$ be two random variables with values in $\pi_{\pfr^N}(\ocal)$. Suppose $\Omega$ is a subset of $\ocal$ such that $\pi_{\pfr}$ induces a bijection from $\Omega$ to the set $\f^{\times}$ of non-zero elements of the residue field $\f$. Let $\alpha$ be a random variable with respect to the probability counting measure on $\Omega$; and $\overline{\alpha}:=\pi_{\pfr^N}(\alpha)$. Then
	\[
	H(X+\overline{\alpha}Y|\alpha)\ge \sum_{m=0}^{N-1} \overline{H}_m,
	\]
	where $\overline{H}_m:=H(\pi_{\pfr}(D_m(X))+\pi_{\pfr}(\alpha)\pi_{\pfr}(D_m(Y))|\pi_{\pfr}(\alpha), \{(D_i(X),D_i(Y)\}_{i=0}^{m-1})$.
\end{cor}
\begin{proof}
Since $X+\overline{\alpha}Y$ is uniquely determined by its first $N-1$ $\pfr$-adic and vice versa, we have 
\[
H(X+\overline{\alpha}Y|\alpha)=H(\{D_i(X+\overline{\alpha}Y)\}_{i=0}^{N-1}|\alpha).
\]	
And so by Lemma~\ref{l:PropertiesEntropy}, part (a), we have
\be\label{eq-chain-rule}
H(X+\overline{\alpha}Y|\alpha)=\sum_{m=0}^{N-1} H_m,
\ee
where $H_m:=H(D_m(X+\overline{\alpha}Y)|\alpha, \{D_i(X+\overline{\alpha}Y)\}_{i=0}^{m-1})$. Therefore by \eqref{eq-chain-rule} and Lemma~\ref{le:entropy-m-digit} we get
\[
H(X+\overline{\alpha}Y|\alpha)\ge H(\pi_{\pfr}(D_m(X))+\pi_{\pfr}(\alpha)\pi_{\pfr}(D_m(Y))|\pi_{\pfr}(\alpha),\{(D_i(X),D_i(Y)\}_{i=0}^{m-1});
\]
and the claim follows.
\end{proof}
Based on Corollary~\ref{cor:cond-entropy-lin-com}, we see the need of having a lower bound for the entropy of a linear combination of two random variables with values in the residue field $\f$. As it was pointed out in Section~\ref{ss:outline} (see Question~\ref{qu-entropy-lower-bound}), at this point we do not know the answer to this question for arbitrary random variables. The following proposition partially answers this question. 

\begin{prop}\label{p:cond-entropy-linear-comb}
	Let $\overline{A}$ and $\overline{B}$ be two non-empty subsets of a finite field $\f$. Suppose $\overline{X}$ and $\overline{Y}$ are two random variables with values in $\f$ with respect to the probability counting measures on $\overline{A}$ and $\overline{B}$, respectively. Suppose $\overline{\alpha}$ is a uniform random variable with values in $\f^{\ast}$. Then 
	\[
	H(\overline{X}+\overline{\alpha}\overline{Y}|\overline{\alpha})\ge 
	-\log \left(\frac{1}{|\overline{A}||\overline{B}|}+\frac{1}{|\f|}\right)\ge 
	\min\{H(\overline{X})+H(\overline{Y}),\log|\f|\}-\log 2. 
	\]
\end{prop}
To prove Proposition~\ref{p:cond-entropy-linear-comb}, we start with a Lemma that helps us control an average of the R\'{e}nyi entropies $H_2(\overline{X}+c\overline{Y})$ as $c$ varies in $\f^{\times}$.  
\begin{lem}\label{l:AverageL2norm}
Let $\overline{A}, \overline{B}$ be two non-empty subsets of a finite field $\f$. Let $\pcal_{\overline{A}}$ and $\pcal_{\overline{B}}$ be the probability counting measures on $\overline{A}$ and $\overline{B}$, respectively. For two functions $f,g:\f\rightarrow \bbc$, let $f\ast g(x):=\sum_{y\in \f}f(y)g(x-y)$ be the convolution of $f$ and $g$. Then 
\[
\frac{1}{|\f^{\times}|}\sum_{c\in \f^{\times}} \|\pcal_{\overline{A}}\ast c\pcal_{\overline{B}}\|_2^2\le \min\left(1,\frac{1}{|\overline{A}||\overline{B}|}+\frac{1}{|\f|}\right),
\] 
where $c\pcal_{\overline{B}}$ is the push-forward of $\pcal_{\overline{B}}$ under the multiplication by $c$.

\end{lem}
\begin{proof}
Let us recall that for any two subsets $U$ and $V$ of $\f$, the additive energy of $U$ and $V$ is 
\[
E(U,V)=|\{(x_1,y_1,x_2,y_2)\in U\times V\times U\times V|\h x_1+y_1=x_2+y_2\}|,
\]
 and we have
\[
E(X,Y)=\|\one_X\ast \one_Y\|_2^2.
\]
where $\one_X$ is the characteristic function of the set $X$. Hence we get
\begin{align*}
\frac{1}{|\f^{\times}|}\sum_{\alpha\in \f^{\times}} \|\pcal_{\abar} \ast \alpha\pcal_{\bbar}\|_2^2
& =\frac{1}{|\f^{\times}||\abar|^2|\bbar|^2}\sum_{\alpha\in \f^{\times}}\|\one_{\abar}\ast \one_{\alpha \bbar}\|_2^2
=\frac{1}{|\f^{\times}||\abar|^2|\bbar|^2}\sum_{\alpha\in \f^{\times}}E(\abar,\alpha \bbar)\\
&=\frac{1}{|\f^{\times}||\abar|^2|\bbar|^2}\sum_{\alpha\in \f^{\times}}|\{(a_1,b_1,a_2,b_2)\in \abar\times \bbar \times \abar \times \bbar|\h a_1+\alpha b_1=a_2+\alpha b_2\}| \\
&=\frac{1}{|\f^{\times}||\abar|^2|\bbar|^2}\sum_{\alpha\in \f^{\times}}|\{(a_1,b_1,a_2,b_2)\in \abar\times \bbar \times \abar \times \bbar|\h a_1=a_2,\h a_1+\alpha b_1=a_2+\alpha b_2\}|\\
&+\frac{1}{|\f^{\times}||\abar|^2|\bbar|^2}\sum_{\alpha\in \f^{\times}}
|\{(a_1,b_1,a_2,b_2)\in \abar\times \bbar \times \abar \times \bbar|\h a_1\neq a_2,\h a_1+\alpha b_1=a_2+\alpha b_2\}|\\
&=\frac{1}{|\abar||\bbar|}\\
&+\frac{1}{|\f^{\times}||\abar|^2|\bbar|^2}\sum_{\alpha\in \f^{\times}}
\{(a_1,b_1,a_2,b_2)\in \abar\times \bbar \times \abar \times \bbar|\h a_1\neq a_2, b_1\neq b_2, \alpha=\frac{a_1-a_2}{b_2-b_1}\}|\\
&=\frac{1}{|\abar||\bbar|}+\frac{|(\abar^2\setminus \Delta(\abar))\times (\bbar^2\setminus \Delta(\bbar))|}{|\f^{\times}||\abar|^2|\bbar|^2}, \h\text{where $\Delta(X):=\{(x,x)| x\in X\}$,}\\
&=\frac{1}{|\abar||\bbar|}+\frac{(|\abar|-1)(|\bbar|-1)}{|\f^{\times}||\abar||\bbar|} \le \min\left(1,\frac{1}{|\overline{A}||\overline{B}|}+\frac{1}{|\f|}\right).
\end{align*}
\end{proof}
\begin{proof}[Proof of Proposition~\ref{p:cond-entropy-linear-comb}]
{\bf Step 1.} (Entropy $\ge$ R\'{e}nyi entropy) Using Lemma~\ref{l:PropertiesEntropy}, part (d), we have
\be\label{eq-cond-to-renyi}
H(\overline{X}+\overline{\alpha}\overline{Y}|\overline{\alpha})=
\frac{1}{|\f^{\times}|}\sum_{c\in \f^{\times}} H(\overline{X}+c\overline{Y})\ge \frac{1}{|\f^{\times}|}\sum_{c\in \f^{\times}} H_2(\overline{X}+c\overline{Y})
=-\frac{1}{|\f^{\times}|}\sum_{c\in \f^{\times}}\log \|\pcal_{\overline{A}}\ast c\pcal_{\overline{B}}\|_2^2.
\ee

{\bf Step 2.} (Convexity of  $-\log$) By the convexity of $-\log$ function and Jensen's inequality, we have
\be\label{eq-convexity}
-\log \left(\frac{1}{|\f^{\times}|}\sum_{c\in \f^{\times}} \|\pcal_{\overline{A}}\ast c\pcal_{\overline{B}}\|_2^2\right)
\le -\frac{1}{|\f^{\times}|}\sum_{c\in \f^{\times}}\log \|\pcal_{\overline{A}}\ast c\pcal_{\overline{B}}\|_2^2.
\ee
{\bf Step 3.} (Finishing the proof) By Lemma~\ref{l:AverageL2norm}, \eqref{eq-cond-to-renyi}, and \eqref{eq-convexity}, we get that 
\[
H(\overline{X}+\overline{\alpha}\overline{Y}|\overline{\alpha})\ge 
-\log\left(\frac{1}{|\overline{A}||\overline{B}|}+\frac{1}{|\f|}\right).
\]
And since $H(\overline{X})=\log |\overline{A}|$ and $H(\overline{Y})=\log |\overline{B}|$, claim follows.
\end{proof}
\begin{proof}[Proof of Proposition~\ref{p:ScalarSumRegular}] For a random variable $Z$ with values in a set $\mathcal{Z}$ and a function $f:\mathcal{Z}\rightarrow \bbc$, let $\bbe_{Z}(f)$ be the expectation of the random variable $f(Z)$; for instance for a function $f:\Omega\rightarrow \bbc$, we have $\bbe_{\alpha}(f)=\frac{1}{|\f^{\times}|}\sum_{c\in\f^{\times}}f(c)$.

Let $X$ and $Y$ be uniform random variables on the sets $A$ and $B$, respectively. Suppose $\Omega$ is a subset of $\ocal$ such that $\pi_{\pfr}$ induces a bijection from $\Omega$ to $\f^{\times}$. 

{\bf Step 1.} (From cardinality to entropy) Using Lemma~\ref{l:PropertiesEntropy}, part (d), we have 
\be\label{eq:cardinality-entropy}
\max_{c\in \Omega} \log |A+cB| \ge \bbe_{\alpha}(\log |A+\alpha B|) \ge 
	\bbe_{\alpha}(H(X+\alpha Y))=H(X+\alpha Y|\alpha).
\ee 
{\bf Step 2.} (Entropy to relative entropies of digits) By Corollary~\ref{cor:cond-entropy-lin-com}, we have
\be\label{eq:relative-entropy-digits}
H(X+\overline{\alpha}Y|\alpha)\ge \sum_{m=0}^{N-1} \overline{H}_m,
\ee 
where $\overline{H}_m:=H(\pi_{\pfr}(D_m(X))+\pi_{\pfr}(\alpha)\pi_{\pfr}(D_m(Y))|\pi_{\pfr}(\alpha), \{(D_i(X),D_i(Y)\}_{i=0}^{m-1})$.

{\bf Step 3.} (Regularity and bound for relative entropies) By Lemma~\ref{l:ConditionalMeasuresRegular}, for any $a\in A$, $b\in B$, and integer $k$ in $[0,N-1]$, the conditional probability measures 
\[
\bbp(\pi_{\pfr}(D_k(X))|\{D_i(X)\}_{i=0}^{k-1}=\{D_i(a)\}_{i=0}^{k-1}) 
\text{ and }
\bbp(\pi_{\pfr}(D_k(Y))|\{D_i(Y)\}_{i=0}^{k-1}=\{D_i(b)\}_{i=0}^{k-1}) 
\]
are probability counting measures on sets of size $m_k$ and $l_k$, respectively. And so by Proposition~\ref{p:cond-entropy-linear-comb} we have 
\be\label{eq-at-k-th-level}
\overline{H}_k
\ge \max\left\{0,-\log\left(\frac{1}{m_kl_k}+\frac{1}{|\f|}\right)\right\}.
\ee
 {\bf Step 4.} (Finishing the proof) By \eqref{eq:cardinality-entropy}, \eqref{eq:relative-entropy-digits}, and \eqref{eq-at-k-th-level}, we get
\[
\max_{c\in \Omega} \log |A+cB|\ge \sum_{k=0}^{N-1} \max\left\{0,-\log\left(\frac{1}{m_kl_k}+\frac{1}{|\f|}\right)\right\}= 
\log\left(\prod_{k=0}^{N-1} \max\left\{1,\left(\frac{1}{m_kl_k}+\frac{1}{|\f|}\right)^{-1}\right\}\right);
\]
and the claim follows.
\end{proof}

\subsection{Scalar-Sum-Product expansion for regular sets.}\label{ss:ScalarSumExpansionRegular}
The following is the main result of this section.
\begin{prop}[Scalar-Sum-Product expansion for regular sets]\label{p:ScalarSumExpansionRegular}
For any positive number $\vare$, positive integer $d$, $0<\delta\ll \vare^4$, and any finite extension $K$ of $\bbq_p$ with large, depending on $\vare$, residue field $\f$ the following holds:

Let $\ocal$ be the ring of integers of $K$, and $\pfr$ be a uniformizing element of $K$. Let $\Omega\subseteq \ocal$, and suppose $\pi_{\pfr}$ induces a bijection between $\Omega\subseteq \ocal$ and $\f^{\times}$. Suppose $A\subseteq \pi_{\pfr^N}(\ocal)$ satisfies the following properties:
\begin{enumerate}
	\item $A$ is an $(m_0,\ldots,m_{N-1})$-regular subset; and $m_0,m_1$ are more than 1.
	\item $|A|\le |\f|^{N(1-\vare)}$.
	\item $|\pi_{\pfr^l}(A)|\ge |\f|^{l\vare}$ for any $N\delta\le l\le N$.
\end{enumerate}
Then
\[
\max_{\alpha\in \Omega, a\in A-A}|A+\pi_{\pfr^N}(\alpha)aA|\ge |A| |\f|^{N\delta}.
\]
\end{prop}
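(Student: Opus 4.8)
The plan is to deduce the estimate from Proposition~\ref{p:ScalarSumRegular} by taking $B=aA$ for a well-chosen $a\in A-A$, and then to reduce what remains to a combinatorial statement about the regularity profile $(m_0,\dots,m_{N-1})$ of $A$. Write $t_i:=\log_{|\f|}m_i\in[0,1]$ and $P(l):=\sum_{i<l}t_i$, so $|A|=|\f|^{P(N)}$ and $|\pi_{\pfr^l}(A)|=|\f|^{P(l)}$; then (2) reads $P(N)\le N(1-\vare)$, (3) reads $P(l)\ge l\vare$ for $l\ge N\delta$, and (1) gives $t_0,t_1>0$. If $a\in A-A$ has $\pfr$-adic valuation $v$, then multiplication by $a$ is the isomorphism $\pi_{\pfr^N}(\pfr^v\ocal)\xrightarrow{\ \sim\ }\pi_{\pfr^{N-v}}(\ocal)$ followed by multiplication by a unit of $\ocal/\pfr^{N-v}$; a unit multiplication is a tree automorphism of $\pi_{\pfr^{N-v}}(\ocal)$ and so preserves regularity profiles, whence $aA$ is $(\underbrace{1,\dots,1}_{v},m_0,\dots,m_{N-1-v})$-regular, and such an $a$ exists precisely when $m_v>1$, i.e. $t_v>0$. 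Putting $A$ and $B=aA$ into Proposition~\ref{p:ScalarSumRegular} and applying, term by term, the elementary bound
\[
\log_{|\f|}\max\!\Big(1,\big(x^{-1}y^{-1}+|\f|^{-1}\big)^{-1}\Big)\ \ge\ \min\!\big(\log_{|\f|}x+\log_{|\f|}y,\ 1\big)-\log_{|\f|}2 ,
\]
one obtains
\[
\log_{|\f|}\max_{\omega\in\Omega}\big|A+\pi_{\pfr^N}(\omega)\,aA\big|\ \ge\ P(N)\ +\ \sum_{i=v}^{N-1}\min(t_{i-v},1-t_i)\ -\ N\log_{|\f|}2 .
\]
Since $|\f|$ is large in terms of $\vare$ (hence of $\delta$), we may assume $N\log_{|\f|}2\le N\delta$, so it suffices to produce $v$ with $t_v>0$ and $\mathrm{g}(v):=\sum_{i=v}^{N-1}\min(t_{i-v},1-t_i)\ge 2N\delta$.

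Everything thus reduces to the following: \emph{for any $t_0,\dots,t_{N-1}\in[0,1]$ with $P(N)\le N(1-\vare)$, $P(l)\ge l\vare$ for $l\ge N\delta$, and $t_0,t_1>0$, there is $v$ with $t_v>0$ and $\mathrm{g}(v)\ge c(\vare)N$ for some $c(\vare)=\vare^{O(1)}$}; one then chooses $\delta<\tfrac12 c(\vare)$, consistently with $\delta\ll\vare^4$. Let $\eta:=\vare/100$ and $L:=\{i:t_i\le\eta\}$; note that $P(l)\ge l\vare$ forces $|\{i<l:t_i>\eta\}|\ge l\vare/2$ for $l\ge N\delta$. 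Two cases are easy. First, if at least $\eta N$ indices are \emph{intermediate}, i.e. satisfy $\eta<t_i<1-\eta$, then $\mathrm{g}(0)=\sum_i\min(t_i,1-t_i)\ge\eta^2N$, and $v=0$ works (legitimate, as $t_0>0$). Second, if some maximal run of $L$ has length $\ge g_0:=\lceil 10^{3}\delta N/\vare^2\rceil$, let $v$ be the index immediately preceding that run (by maximality $t_v>\eta$, and the density bound forces $v\ge1$): every index of $[1,g_0]$ with $t>\eta$ is carried by the shift $v$ into the run, where $t\le\eta$, so it contributes $\ge\eta$ to $\mathrm{g}(v)$, giving $\mathrm{g}(v)\ge\eta\big(\tfrac12 g_0\vare-1\big)\ge 2N\delta$. (In the complementary regime the hypotheses also let one transfer the $P$-estimates to the \emph{binary} counting function $l\mapsto|\{i<l:t_i\ge1-\eta\}|$, since the number of intermediate indices, $\sum_{i\in L}t_i$, and $\sum_{t_i\ge1-\eta}(1-t_i)$ are then all $o_\vare(N)$.)

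The remaining case — few intermediate indices and every maximal $L$-run short — is the core of the matter, and the step I expect to be the main obstacle. Here $H:=\{i:t_i\ge1-\eta\}$ is a spread set of density bounded away from $0$ and $1$, whose complement is a union of short intervals of total length $\gg\vare N$; and the natural averaging quantities (for instance the mean over $v\in H$ of $|(H+v)\cap L|$) can be essentially $0$ — precisely what occurs when $H$ is close to an arithmetic progression, where $H+H\subseteq H$ modulo overflow past $N$. The profile hypotheses, however, already exclude this obstruction: the forced index $1\in H$ destroys exact self-similarity, and a set that is genuinely spread with density in $(0,1)$ cannot be approximately sum-closed at all scales. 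So one must choose $v\in H$, $v\ge1$, adapted to the additive structure of $H$, and show that for it a $\vare^{O(1)}$-fraction of $H+v$ lands in $L$; since then each $i$ with $i-v\in H$, $i\in L$ contributes $\ge\eta$ to $\mathrm{g}(v)$, this yields $\mathrm{g}(v)\gg\vare^{O(1)}N$, and it is this quantitative bound that fixes the exponent of $\vare$ in $\delta$. Combining the three cases produces the required $v$, which finishes the proof.
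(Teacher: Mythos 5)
Your reduction step is correct and is the same one the paper uses: putting $B=aA$ into Proposition~\ref{p:ScalarSumRegular}, taking $\log_{|\f|}$, and writing $x_i=\log_{|\f|}m_i$ yields exactly the inequalities recorded in Lemma~\ref{l:xisProperties}, so it remains to show that the profile $(x_i)$ cannot simultaneously satisfy $\sum x_i\le N(1-\vare)$, $\sum_{i<l}x_i\ge l\vare$ for $l\ge N\delta$, and $\sum_{i=0}^{N-1-k}\min(x_i,1-x_{i+k})\le N(\delta+\log 2/\log|\f|)$ for every $k$ with $x_k\neq 0$. Your Cases~1 and~2 are sound.

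However, you explicitly leave Case~3 (few intermediate indices, all low runs short) unresolved, and that is precisely where the content of the proposition lies; so as written this is a genuine gap, not a complete proof. You correctly diagnose the obstruction -- when $H=\{i:x_i\ge 1-\eta\}$ is close to being additively closed, no single shift $v\in H$ need move a positive fraction of $H$ into its complement -- but the remark that ``$1\in H$ destroys exact self-similarity'' is not enough to extract a quantitative $\vare^{O(1)}N$ lower bound, and in fact the hypotheses give only $x_1>0$, i.e.\ $1\in B$, not $1\in H$.

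The paper handles this obstruction without any case analysis, via the subadditive deficiency function $D_T(k)=|(T+k\cap[0,N-1])\setminus T|$ for $T=\{i:x_i\ge 1/2\}$, together with the set $B=\{i:x_i\neq 0\}$. The contrary assumption gives $D_T(k)\ll N\delta$ for every $k\in B$ (Lemma~\ref{l:BT}). On the other hand, an averaging argument (Lemma~\ref{l:AverageD}) shows $\sum_k D_T(k)\gg N^2\vare^3$, hence some single $D_T(j_0)\gg N\vare^3$ with $j_0\ge N\vare^3/32$ (Corollary~\ref{c:LargeD}). Mann's theorem applied to $B$ (Lemma~\ref{l:LargeInterval}) shows any such $j_0$ is a sum of $O(1/\vare)$ elements of $B$, and the subadditivity $D_T(k_1+k_2)\le D_T(k_1)+D_T(k_2)$ (Lemma~\ref{l:Subadditive}) then forces $D_T(b)\gg N\vare^4$ for some $b\in B$, contradicting the first bound once $\delta\ll\vare^4$ and $|\f|\gg_\vare 1$. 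In effect, the subadditivity lets one transport ``large $D_T$'' from an arbitrary $j_0$ down to an element of $B$ without ever having to locate a good shift directly, which is exactly the step your Case~3 requires and does not supply. To salvage your argument you would need to carry out that step; alternatively, you could replace your three cases by the paper's $D_T$ argument from Lemma~\ref{l:xisProperties} onward, since your reduction already produces the hypotheses it needs.
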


We prove Proposition~\ref{p:ScalarSumExpansionRegular} by contradiction. For the rest of this section, $A$ and $m_i$'s satisfy all the conditions of Proposition~\ref{p:ScalarSumExpansionRegular}. Moreover we assume to the contrary that $|A+\pi_{\pfr^N}(\alpha)aA|< |A| |\f|^{N\delta}$ (for a small enough $\delta$ to be determined later) for any $\alpha\in\Omega$ and $a\in A-A$, and let
\be\label{e:xis}
x_i:=\frac{\log m_i}{\log |\f|}
\ee
for any $0\le i\le N-1$. 

\begin{lem}\label{l:xisProperties}
Let $x_i$'s be as in (\ref{e:xis}). Then
\begin{align}
\label{e:size} x_0,x_1\neq 0 & \h \text{and }
0\le x_i\le 1 \text{ for any } 0\le i\le N-1, \\
\label{e:sum} \sum_{i=0}^{N-1} x_i &\le N(1-\vare),\\
\label{e:large}\sum_{i=0}^{l-1} x_i &\ge l\vare \text{, for any $N\delta \le l \le N$},\\
\label{e:ContraryAssumption}\sum_{i=0}^{N-1-k} \min(x_i,1-x_{i+k}) &\le N\left(\delta+\frac{\log 2}{\log |\f|}\right) \text{if  $x_k\neq 0$}.
\end{align}
\end{lem}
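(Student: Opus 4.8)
The quantity $x_i=\log m_i/\log|\f|$ records (the base-$|\f|$ logarithm of) the branching of the regular tree $A$ at level $i$, and I would first dispatch $(\ref{e:size})$--$(\ref{e:large})$ as pure bookkeeping. Since $A$ is $(m_0,\dots,m_{N-1})$-regular, each vertex of $\pi_{\pfr^l}(A)$ has exactly $m_l$ children, so $|\pi_{\pfr^l}(A)|=\prod_{i=0}^{l-1}m_i$ and in particular $|A|=\prod_{i=0}^{N-1}m_i$. Taking $\log_{|\f|}$, $(\ref{e:sum})$ is exactly hypothesis (2) of Proposition~\ref{p:ScalarSumExpansionRegular} and $(\ref{e:large})$ is exactly hypothesis (3); the bounds $1\le m_i\le|\f|$ (a vertex of $A$ has between one and $|\f|$ children) give $0\le x_i\le 1$, and $m_0,m_1>1$ gives $x_0,x_1\ge\log 2/\log|\f|\neq 0$, which is $(\ref{e:size})$.

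For $(\ref{e:ContraryAssumption})$ the plan is to feed the standing contrary assumption into the already-proved Scalar--Sum inequality, Proposition~\ref{p:ScalarSumRegular}. Suppose $x_k\neq 0$, i.e. $m_k>1$. Then some vertex of $\pi_{\pfr^k}(A)$ has two distinct children in $\pi_{\pfr^{k+1}}(A)$, so there are $a_1,a_2\in A$ whose difference $a:=a_1-a_2$ lies in $\pi_{\pfr^N}\!\big(\pfr^k\ocal\setminus\pfr^{k+1}\ocal\big)$; write $a=\pi_{\pfr^N}(\pfr^k w)$ with $w\in\ocal^{\times}$. The structural point is that $B:=aA$ is again a regular subset of $\pi_{\pfr^N}(\ocal)$: multiplication by the unit $w$ is an automorphism of the tree $\pi_{\pfr^N}(\ocal)$, so $wA$ is $(m_0,\dots,m_{N-1})$-regular, and multiplying by $\pfr^k$ prepends $k$ trivial initial levels; hence $B$ is $(n_0,\dots,n_{N-1})$-regular with $n_i=1$ for $i<k$ and $n_i=m_{i-k}$ for $k\le i\le N-1$.

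Applying Proposition~\ref{p:ScalarSumRegular} to $A$ and $B$ and using $\big(\tfrac1u+\tfrac1v\big)^{-1}\ge\tfrac12\min(u,v)$, there is $\omega\in\Omega$ with
\[
|A+\pi_{\pfr^N}(\omega)B|\ \ge\ \prod_{i=0}^{N-1}\Big(\tfrac{1}{m_in_i}+\tfrac1{|\f|}\Big)^{-1}\ \ge\ 2^{-N}\prod_{i=0}^{N-1}\min\big(|\f|,\,m_in_i\big).
\]
Since $m_in_i=m_i\le|\f|$ for $i<k$ and $m_in_i=m_im_{i-k}$ for $i\ge k$, the last product equals
\[
\Big(\prod_{i=0}^{k-1}m_i\Big)\prod_{i=k}^{N-1}|\f|^{\min(1,\,x_i+x_{i-k})}
= |A|\cdot|\f|^{\sum_{i=k}^{N-1}\big(\min(1,\,x_i+x_{i-k})-x_i\big)}
= |A|\cdot|\f|^{\sum_{j=0}^{N-1-k}\min(x_j,\,1-x_{j+k})},
\]
the last step using $\min(1,a+b)-a=\min(1-a,b)$ with $a=x_i$, $b=x_{i-k}$ and reindexing $j=i-k$. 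On the other hand the contrary assumption, applied with the (unique) $\alpha\in\f^{\times}$ satisfying $\psi(\alpha)=\omega$ and with the above $a$, gives $|A+\pi_{\pfr^N}(\omega)B|<|A||\f|^{N\delta}$. Comparing the two bounds and taking $\log_{|\f|}$ yields $\sum_{j=0}^{N-1-k}\min(x_j,1-x_{j+k})<N\delta+N\log 2/\log|\f|$, which is $(\ref{e:ContraryAssumption})$.

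The only steps that require care are the verification that $B=aA$ really is an $(n_0,\dots,n_{N-1})$-regular set with the $k$-shifted parameters — which rests on the two observations that multiplication by a unit is a tree automorphism and that multiplication by $\pfr^k$ inserts $k$ trivial initial levels — and the bookkeeping identity $\min(1,x_i+x_{i-k})-x_i=\min(1-x_i,x_{i-k})$ that converts the output of Proposition~\ref{p:ScalarSumRegular} into precisely the shape of $(\ref{e:ContraryAssumption})$; everything else is direct substitution.
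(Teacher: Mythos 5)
Your proof is correct and follows essentially the same route as the paper: parts (\ref{e:size})--(\ref{e:large}) are the same bookkeeping, and for (\ref{e:ContraryAssumption}) you make the same observation that $aA$ is $(1,\dots,1,m_0,\dots,m_{N-1-k})$-regular and then apply Proposition~\ref{p:ScalarSumRegular} together with the standing contrary assumption; your intermediate manipulation via $\min(1,x_i+x_{i-k})-x_i=\min(1-x_i,x_{i-k})$ is just a cosmetic variant of the paper's $\prod\min(m_{i-k},|\f|/m_i)/2$ factorization, and the two are algebraically identical.
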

\begin{proof}
Since $1\le m_i\le |\f|$, $|A|=\prod_{i=0}^{N-1}m_i\le |\f|^{N(1-\vare)}$, and $|\pi_{\pfr^l}(A)|=\prod_{i=0}^{l-1} m_i\ge |\f|^{l\vare}$ for $N\delta\le l\le N$, one can see that (\ref{e:size}), (\ref{e:sum}), and (\ref{e:large}) hold.

Suppose $x_k\neq 0$. So there is $a\in A-A$ such that $a\in \pi_{\pfr^N}(\pfr^k\ocal)\setminus \pi_{\pfr^N}(\pfr^{k+1}\ocal)$. Hence $aA$ is an $(1,\ldots,1,m_0,\ldots,m_{N-1-k})$-regular subset of $\pi_{\pfr^N}(\ocal)$. Let $m_{-i}=1$ for any $i\in \bbz^+$. Therefore by Proposition~\ref{p:ScalarSumRegular} we have
\begin{align*}
\max_{\alpha\in\Omega} |A+\pi_{\pfr^N}(\alpha)aA|&\ge \prod_{i=0}^{N-1}\max\left(1,\left(\frac{1}{m_i m_{i-k}}+\frac{1}{|\f|}
\right)^{-1} \right) \\
& \ge (\prod_{i=0}^{N-1}m_i) \prod_{i=0}^{N-1}\left(\frac{1}{m_{i-k}}+\frac{m_i}{|\f|}
\right)^{-1}\\
&\ge |A| \prod_{i=0}^{N-1}\frac{\min\left(m_{i-k},|\f|/m_i \right)}{2}.
\end{align*}
Thus by the contrary assumption we have
\[
N\delta \log |\f|\ge -N \log 2+\sum_{i=0}^{N-1} \min(\log m_{i-k}, \log |\f|-\log m_i).
\]
And so
\[
N\left(\delta+\frac{\log 2}{\log |\f|}\right)\ge \sum_{i=0}^{N-1-k} \min(x_{i}, 1-x_{i+k}).
\]
\end{proof}
Now we follow Lindenstrauss-Varj\'{u}'s treatment~\cite{LV} to prove that, if $|\f|\gg_{\vare}1$ and $0<\delta\ll \vare^4$, then there are no real numbers $x_0,\ldots, x_{N-1}$ that satisfy properties mentioned in Lemma~\ref{l:xisProperties}. This is based on Mann's theorem on Schnirelmann density of subsets of non-negative integers.
\begin{definition}\label{d:Density}
The Schnirelmann density $\sigma(X)$ of a non-empty subset $X$ of non-negative integers is
\[
\sigma(X):=\inf_{n\in \bbz^+}\frac{|X\cap [1,n]|}{n}.
\]
\end{definition}
\begin{thm}[Mann's Theorem]\label{t:Mann}
Let $X,Y$ be two non-empty subsets of non-negative integers. Suppose $X$ and $Y$ contain $0$. Then either $X+Y=\bbz^{\ge 0}$ or $\sigma(X+Y)\ge \sigma(X)+\sigma(Y)$.
\end{thm}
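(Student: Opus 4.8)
The plan is to reduce the statement to a local counting inequality and then treat it with Dyson's $e$-transform. Write $Z(n):=|Z\cap[1,n]|$ for $Z\subseteq\bbz^{\ge 0}$, and set $\alpha:=\sigma(X)$, $\beta:=\sigma(Y)$, so that $Z(n)\ge\sigma(Z)\,n$ for every $n\ge 1$. Since a set $Z\ni 0$ has $\sigma(Z)=1$ exactly when $Z=\bbz^{\ge 0}$, the asserted dichotomy is equivalent to $\sigma(X+Y)\ge\min(1,\alpha+\beta)$, for which it suffices to prove $(X+Y)(n)\ge\min(n,(\alpha+\beta)n)$ for every $n\ge 1$. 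The case $\alpha+\beta\ge 1$ is immediate: if some $n$ were missing from $X+Y$, then $X\cap[0,n]$ and $\{\,n-y:y\in Y\cap[0,n]\,\}$ would be disjoint subsets of $[0,n]$ of sizes $\ge\alpha n+1$ and $\ge\beta n+1$, whose total is at least $(\alpha+\beta)n+2\ge n+2$, exceeding $|[0,n]|=n+1$; hence in fact $X+Y=\bbz^{\ge 0}$.

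Assume henceforth $\alpha+\beta<1$, so the inequality to prove is $(X+Y)(n)\ge(\alpha+\beta)n$, and I would argue by induction on $n$. The case $n=1$ is clear (if $\alpha+\beta>0$ then, say, $\alpha>0$ forces $1\in X\subseteq X+Y$, so $(X+Y)(1)=1>\alpha+\beta$). For $n\ge 2$: if $n\in X+Y$ then $(X+Y)(n)=(X+Y)(n-1)+1\ge(\alpha+\beta)(n-1)+1=(\alpha+\beta)n+(1-(\alpha+\beta))>(\alpha+\beta)n$, so we may assume $n\notin X+Y$; the pigeonhole argument above then yields $X(n)+Y(n)\le n-1$. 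What remains is to upgrade the bound $(X+Y)(n-1)\ge(\alpha+\beta)(n-1)$ supplied by the inductive hypothesis to $(X+Y)(n-1)\ge(\alpha+\beta)n$, that is, to harvest the extra $\alpha+\beta$ from the slack $X(n)+Y(n)\le n-1$.

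To do this I would use Dyson's $e$-transform: for $e\in X$ put $X_e:=X\cup(e+Y)$ and $Y_e:=Y\cap(X-e)$; one checks $0\in X_e\cap Y_e$, $X_e+Y_e\subseteq X+Y$, and $X_e\supseteq X$. Choosing $e$ to be the largest element of $X$ below the least positive integer $c$ absent from $X+Y$ makes $X$ (hence $X_e$) miss $[e+1,c]$, which forces $Y_e\cap[1,c-e]=\emptyset$ while $X_e$ gains the shifted copy $e+Y$. Iterating this transform, always taken relative to the current least gap, lets one follow how the gaps of the sum set and the counting functions migrate until the configuration is simple enough that the required inequality can be checked directly and then pushed back along the inclusions $X_e+Y_e\subseteq X+Y$.

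The main obstacle is exactly this last step. The $e$-transform does not preserve $\sigma(Y)$ — in fact $\sigma(Y_e)=0$ as soon as a gap is present — so one cannot run a clean secondary induction with $\alpha$ and $\beta$ frozen; one must instead identify the right monovariant (morally $X(n)+Y(n)$, which the transform can only decrease), prove it decreases strictly away from a terminal configuration, and analyze that terminal configuration by hand. This bookkeeping is the combinatorial heart of the $\alpha+\beta$ theorem, and is precisely the part that made Mann's original proof delicate and that the later arguments of Artin–Scherk and of Dyson serve to streamline.
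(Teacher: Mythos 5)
The paper does not prove Mann's theorem; it is stated as a classical result with the citation \cite{Man} and used as a black box, so there is no in-paper proof for you to match.

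Your reduction to the inequality $(X+Y)(n)\ge\min\bigl(n,(\alpha+\beta)n\bigr)$, the pigeonhole argument for the case $\alpha+\beta\ge 1$, and the easy branch $n\in X+Y$ of the induction are all correct. But the proposal stops exactly where the theorem begins. When $n\notin X+Y$, the inductive hypothesis supplies only $(X+Y)(n-1)\ge(\alpha+\beta)(n-1)$, and extracting the additional $\alpha+\beta$ from the slack $X(n)+Y(n)\le n-1$ \emph{is} Mann's theorem. You correctly identify the Dyson $e$-transform as the relevant device and correctly diagnose why a naive density-by-density induction fails (the transform destroys $\sigma(Y)$), but you do not supply the monovariant, the descent argument, or the terminal-case analysis that the transform-based proofs of Dyson and Artin--Scherk actually carry out; without those this is a strategy outline rather than a proof. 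Since the paper treats Mann's theorem as a citation, the appropriate move is to do the same: cite \cite{Man} or a standard textbook account rather than attempt to reconstruct the argument.
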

Before we proceed with the proof of Proposition~\ref{p:ScalarSumExpansionRegular}, let us recall the definition of
the $i$-th grade ${\rm gr}_{i,\pfr}(A)$ of $A$ with respect to powers of $\pfr$. For any non-negative integer $i$, we let ${\rm gr}_{i,\pfr}(A):=\pi_{\pfr^{i+1}}(\wt{A}\cap \pfr^i\ocal)\subseteq \pfr^i\ocal/\pfr^{i+1}\ocal$ where $\wt{A}:=\pfr^{-1}_{\pfr^N}(A)$. Then for $\{x_k\}_{k=0}^{N-1}$ as in Equation (\ref{e:xis}), we have ${\rm gr}_{k,\pfr}(A-A)\neq 0$ if and only if $x_k\neq 0$.
Let 
\be\label{e:NonZero}
 J:=\{k\in [0,N)|\h x_{k}\neq 0\}.
\ee
\begin{lem}[Lindenstrauss-Varj\'{u}~\cite{LV}]~\label{l:LargeInterval}
Let $x_i$'s be real numbers that satisfy conditions (\ref{e:size}) and (\ref{e:large}) of Lemma~\ref{l:xisProperties}. Then we have
\[\textstyle
(N \lceil 1/\vare\rceil \delta,N)\cap \bbz\subseteq \sum_{3\lceil1/\vare\rceil} J.
\]
\end{lem}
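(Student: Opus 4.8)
The plan is to deduce the claim from Mann's Theorem applied to the sumset $B+B$, iterated roughly $3\lceil 1/\vare\rceil$ times. First I would set $B':=B\cup\{0\}$ so that Mann's Theorem applies, and estimate the Schnirelmann density of $B'$ from below. The key observation is condition (\ref{e:large}): for $N\delta\le l\le N$ we have $\sum_{i=0}^{l-1}x_i\ge l\vare$, and since each $x_i\le 1$ by (\ref{e:size}), the number of nonzero $x_i$ among $i=0,\dots,l-1$ is at least $l\vare$. Thus $|B\cap[0,l-1]|\ge l\vare$, i.e. $|B'\cap[1,l]|\ge l\vare-1$ for all $l$ in the relevant range. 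For small $l<\delta N$ this lower bound can fail (there may be no nonzero $x_i$ at all among the first few), so the honest Schnirelmann density of $B'$ could be $0$; this is why the interval in the conclusion starts at $\lceil 1/\vare\rceil\delta N$ rather than at $1$, and why one passes to a truncated/shifted version of the set.

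The main step is therefore a truncation argument: restrict attention to the interval $[\lceil 1/\vare\rceil\delta N, N)$. On a tail interval $[M,N)$ with $M=\lceil 1/\vare\rceil\delta N$, the relative density of $B$ is bounded below: for any $n$ with $M\le n<N$ one has $|B\cap[M,n]|\ge |B\cap[0,n]|-M\ge n\vare-M\ge n\vare-\lceil1/\vare\rceil\delta N$, and since $n\ge M=\lceil1/\vare\rceil\delta N$ this is $\ge n\vare - n = $ — no, one needs $\delta$ small: choosing $\delta\ll\vare^2$ (certainly guaranteed by $\delta\ll\vare^4$) makes $\lceil1/\vare\rceil\delta N\le (\vare/2) n$ for $n\ge M$, hence $|B\cap[M,n]|\ge (\vare/2)n\ge (\vare/2)(n-M+1)$. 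So the shifted set $(B-M)\cap\bbz^{\ge 0}$, together with $0$, has Schnirelmann density at least $\vare/2$ within $[0,N-M)$. Then Mann's Theorem, applied $t:=3\lceil1/\vare\rceil$ times (or rather $\lceil 2/\vare\rceil$ times, comfortably $\le t$), yields that the $t$-fold sumset of this shifted set either has density $\ge t\cdot(\vare/2)\ge 1$ — forcing it to be all of $\bbz^{\ge 0}$ up to the truncation — or already equals $\bbz^{\ge0}$ in the truncated range. Un-shifting, $\sum_{t}B$ covers every integer in $[tM,\, \text{something}) $; one checks $tM=3\lceil1/\vare\rceil^2\delta N$ is still $<N$ for $\delta$ small, and that the sumset reaches up past $N$, so that in particular $(\lceil1/\vare\rceil\delta N, N)\cap\bbz\subseteq\sum_{3\lceil1/\vare\rceil}B$ — here one uses that $\sum_j B\subseteq\sum_{j'}B$ for $j\le j'$ since $0$ may be added, and that $B\subseteq[0,N-1]$ keeps everything in range.

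The hard part will be bookkeeping the truncation carefully: Mann's Theorem is a statement about Schnirelmann density over all of $\bbz^{\ge0}$, whereas here $B$ lives in a bounded window $[0,N-1]$ and the density lower bound only holds on a sub-window. The clean way around this is to work with the finite set $B\cap[M,N-1]$ viewed inside $[0,N-1-M]$, apply Mann to it together with $\{0\}$ inside this finite interval (the finite version of Mann's Theorem, or equivalently pad with all integers $\ge N$), and track that after $\lceil 2/\vare\rceil$ additions the density exceeds $1$ so the sumset is a full initial segment of length $\ge N-M$; then each element of $[M, N)$ is a sum of at most $\lceil 2/\vare\rceil$ elements of $B$ shifted by $M\cdot\lceil2/\vare\rceil$ — no, more precisely each element of $[tM, N)$ is hit, and absorbing the lower endpoint $tM \le \lceil1/\vare\rceil\delta N$ (valid once $\delta\ll_\vare 1$) gives the stated interval. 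I would double-check the exact constant $3\lceil1/\vare\rceil$ matches: we need $t$ additions of a density-$\vare/2$ set, so $t=\lceil 2/\vare\rceil\le 3\lceil1/\vare\rceil$, which holds, and then pad with trivial summands $0\in B'$ to reach exactly $3\lceil1/\vare\rceil$. This is exactly the Lindenstrauss--Varj\'u argument, so beyond the truncation bookkeeping there should be no genuine obstacle.
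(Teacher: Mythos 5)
Your high-level plan --- shift $B$, bound the Schnirelmann density of the shifted set, apply Mann, then unshift --- is the same as the paper's, but two steps break down, and both failures trace to the choice of the shift amount as a \emph{fixed} quantity $M=\lceil 1/\vare\rceil\delta N$ rather than a data-dependent one. The density estimate is simply false as written: you claim that for $n\ge M$ one can arrange $\lceil1/\vare\rceil\delta N\le(\vare/2)n$ by taking $\delta$ small, but at $n=M$ this reads $M\le(\vare/2)M$, which fails for every $\delta>0$ since the ratio $M/n$ at $n=M$ is $1$ independently of $\delta$. More fundamentally, no fixed shift can work: hypotheses (\ref{e:size}) and (\ref{e:large}) allow, e.g., $B=[0,M-1]\cup[\lceil M/\vare\rceil,N-1]$ (with $\delta\ll\vare^2$), and then $(B-M)\cap\bbz^{\ge0}$ has an empty initial segment of length about $M(1-\vare)/\vare$, so its Schnirelmann density is essentially $0$, not $\ge\vare/2$. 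The paper avoids this by picking the shift point \emph{adaptively}: $k_0$ is the largest integer with $|(B+1)\cap[1,k_0]|<k_0\vare$, and the maximality of $k_0$ forces $|(B+1)\cap[k_0+1,l]|\ge(l-k_0)\vare$ for every $l>k_0$, which is exactly the Schnirelmann density bound for the $k_0$-shifted set, uniformly down to $n=1$.

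The unshift step is also off. Applying Mann $t$ times to the shifted set covers, after unshifting, integers $m\ge tM$, and summands whose shifted value is $0$ must be replaced by an element of $B$ equal to the shift --- but $M=\lceil1/\vare\rceil\delta N$ has no reason to lie in $B$. Your concluding claim ``$tM\le\lceil1/\vare\rceil\delta N$'' is arithmetically false: with $t\ge\lceil2/\vare\rceil\ge 2$ one gets $tM\ge2\lceil1/\vare\rceil\delta N>\lceil1/\vare\rceil\delta N$, so the interval you would obtain begins strictly to the right of the one the lemma asserts, and no choice of $\delta$ fixes this. The paper's $k_0$ repairs both defects simultaneously: one shows $k_0\in B$ directly from the maximality (taking $l=k_0+1$), so $k_0$ together with $0,1\in B$ can absorb the reindexing, and since $k_0<\delta N$ the total unshift penalty $\lceil1/\vare\rceil k_0$ stays below $\lceil1/\vare\rceil\delta N$, exactly matching the left endpoint in the statement.
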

\begin{proof}
By (\ref{e:size}) and (\ref{e:large}), we have that
\be\label{eq-large-intersection-initial}
|(J+1)\cap [1,l]|\ge \sum_{i=0}^{l-1} x_{i} \ge l\vare
\ee
for any $l\in [N\delta, N)$. Let $k_0$ be the largest integer such that $|(J+1)\cap [1,k_0]|< k_0\vare$. Hence, by \eqref{eq-large-intersection-initial}, we have $k_0<\delta N$. For any $k_0<l\le N$ we deduce that 
\be\label{eq-large-intersection}
|(J+1)\cap [k_0+1,l]|=|(J+1)\cap [1,l]|-|(J+1)\cap [1,k_0]|\ge (l-k_0)\vare;
\ee
in particular applying \eqref{eq-large-intersection} for $l=k_0+1$ we get that $k_0\in J$. Next, we let 
\[
X:=\{j-k_0+1|\h j\in J,\h j\ge k_0-1\}\cup \{k\in \bbz|\h k\ge N-k_0\}.
\]
 By \eqref{eq-large-intersection} we have $|(J-k_0+1)\cap [1,l-k_0]|\ge (l-k_0)\vare$ for any $k_0<l\le N$. Hence we have that the Schnirelmann density $\sigma(X)$ of $X$ is at least $d\vare$. Therefore by Mann's theorem (Theorem~\ref{t:Mann}) we have
\[\textstyle
\bbz^{\ge 0}=\sum_{\lceil 1/\vare\rceil} (X\cup \{0\}).
\]
So for any integer $\lceil 1/\vare\rceil \delta N< m < N$ there are $t\le \lceil 1/\vare\rceil$ elements of $X$ that add up to $m-\lceil 1/\vare\rceil k_0$. Since $m-\lceil 1/\vare\rceil k_0<N-k_0$, there are $j_1,\ldots,j_t\in J\cap [k_0,\infty)$ such that
\[
(j_1-k_0+1)+\cdots+(j_t-k_0+1)=m-\lceil 1/\vare\rceil k_0.
\]
Thus we have
\[\textstyle
m=j_1+\cdots+j_t+(\lceil 1/\vare\rceil-t)k_0+t\in \sum_{3\lceil 1/\vare \rceil} J,
\]
as $k_0, 0, 1\in J$.
\end{proof}
For $\{x_k\}_{k=0}^{N-1}$ as in (\ref{e:xis}), let
\be\label{e:Large}
L:=\{k\in [0,N)|\h\ x_{k}\ge 1/2\}.
\ee
In particular, $L\subseteq J$. So $L$ consists of indexes $i$, where the $i$-th grade ${\rm gr}_{i,\pfr}(A-A)$ of $A-A$ is {\em large}. Now the idea is that the contrary assumption implies when we shift $L$ by an element of $J$ we cannot get {\em lots of new} indexes; the almost invariance of $L$ under the shifts by elements of $J$ leads us to a contradiction. Let $D_L(k)$ be the number of new elements that are gained after a $k$-shift of $L$; that means 
\be\label{eq-new-indexes-after-shift}
D_L(k):=|(L+k\cap [0,N))\setminus L|.
\ee 
It is useful to notice that $D_L(k)=|(L\cap [0,N-k))\setminus (L-k)|$. Here is the main property of the sets $J$, $L$, and the function $D_L(k)$.
\begin{lem}\label{l:BT}
Suppose $k\in J$. Then
\[
D_L(k)\le 2N\left(\delta+\frac{\log 2}{\log |\f|}\right),
\]
where $D_L(k)$ is as in \eqref{eq-new-indexes-after-shift}.
\end{lem}
\begin{proof}
Suppose $i\in (L\cap [0,N-k))\setminus (L-k)$. Then
\be\label{e:min}
\min(x_{i},1-x_{i+k})\ge 1/2.
\ee
On the other hand, since $k\in J$, $x_{k}\neq 0$. Therefore by (\ref{e:ContraryAssumption}) we have
\begin{align*}
\frac{D_L(k)}{2}=&
\frac{|(L\cap [0,N-k))\setminus (L-k)|}{2}
\le \sum_{i\in (L\cap [0,N-k))\setminus (L-k)} \min (x_{i},1-x_{i+k})\\
& \le \sum_{i=0}^{N-k-1}\min(x_{i},1-x_{i+k})
 \le N\left(\delta+\frac{\log 2}{\log |\f|}\right).
\end{align*}
\end{proof}
\begin{lem}\label{l:Subadditive}\cite{LV}
For any pair of positive integers $k_1,k_2<N$ we have $D_L(k_1+k_2)\le D_L(k_1)+D_L(k_2)$.
\end{lem}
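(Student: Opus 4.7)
The plan is to realise each element witnessing $D_T(k_1+k_2)$ as producing, in a case-by-case manner, either an element witnessing $D_T(k_1)$ or one witnessing $D_T(k_2)$, and then check the resulting maps land in disjoint pieces injectively. Write $S(k):=((T+k)\cap[0,N-1])\setminus T$, so $D_T(k)=|S(k)|$. The goal becomes producing an injection $\phi\colon S(k_1+k_2)\hookrightarrow S(k_1)\sqcup S(k_2)$.

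Given $s\in S(k_1+k_2)$, write $s=t+k_1+k_2$ for the unique $t\in T$. I split into two cases according to whether the "intermediate shift" $t+k_1$ already leaves $T$. If $t+k_1\notin T$, then since $0\le t+k_1\le t+k_1+k_2\le N-1$, the element $t+k_1$ lies in $((T+k_1)\cap[0,N-1])\setminus T=S(k_1)$; send $s\mapsto t+k_1$ into the $S(k_1)$-component. If instead $t+k_1\in T$, then $s=(t+k_1)+k_2\in T+k_2$ and $s\in[0,N-1]\setminus T$, so $s\in S(k_2)$; send $s\mapsto s$ into the $S(k_2)$-component.

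Each of the two partial maps is manifestly injective (the first is the shift $s\mapsto s-k_2$, the second is the identity), and their images land in different components of the disjoint union $S(k_1)\sqcup S(k_2)$. Consequently
\[
D_T(k_1+k_2)=|S(k_1+k_2)|\le |S(k_1)|+|S(k_2)|=D_T(k_1)+D_T(k_2),
\]
as desired. The only mild subtlety is the range check $t+k_1\in[0,N-1]$ in the first case, which is automatic from $s\in[0,N-1]$ and $k_2\ge 1$; the case $(T+(k_1+k_2))\cap[0,N-1]=\varnothing$ is trivial.
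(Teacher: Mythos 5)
Your proof is correct and is essentially the same argument as the paper's: the paper applies the set-theoretic inclusion $A\setminus C\subseteq(A\setminus B)\cup(B\setminus C)$ with the intermediate set $B=(T-k_1)\cap[0,N-1-k_1-k_2]$, and your case split on whether $t+k_1\in T$ unpacks exactly that inclusion into an explicit injection (working with the translated-by-$+k$ form of $D_T$ rather than the $-k$ form, which the lemma statement notes are equal).
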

\begin{proof}
For any three sets $A,B,$ and $C$ we have $A\setminus C\subseteq (A\setminus B)\cup(B\setminus C)$. Therefore
\begin{align*}
D_L(k_1+k_2)&=|(L\cap [0,N-1-k_1-k_2])\setminus (L-k_1-k_2)|\\
& \le |(L\cap [0,N-1-k_1-k_2])\setminus  ((L-k_1)\cap [0,N-1-k_1-k_2])|\\
&+|((L-k_1)\cap [0,N-1-k_1-k_2])\setminus (L-k_1-k_2)|\\
&\le D_L(k_1)+D_L(k_2).
\end{align*}
\end{proof}
\begin{lem}\label{l:AverageD}\cite{LV}
For some universal implied constants we have
\[
\frac{1}{N}\sum_{k=0}^{N-1} D_L(k) \ge  N \vare^3/16
\]
if $0<\vare\ll 1$, $\delta\ll \vare^2$ and $1\ll_{\vare} |\f|$.
\end{lem}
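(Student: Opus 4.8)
\textbf{Proof proposal for Lemma~\ref{l:AverageD}.}

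The plan is to estimate $\frac{1}{N}\sum_{k=0}^{N-1} D_T(k)$ from below by unwinding the definition of $D_T(k)$ and swapping the order of summation. Write $D_T(k)=|(T+k)\cap[0,N-1]\setminus T|=\sum_{j\in[0,N-1]}\one[j\notin T]\,\one[j-k\in T]$ (with the convention $j-k\in T$ only when $j-k\ge 0$). Summing over $k$ and exchanging sums gives
\[
\sum_{k=0}^{N-1}D_T(k)=\sum_{j\notin T,\, j\in[0,N-1]}\;|\{k\in[0,N-1]: j-k\in T,\ j-k\ge 0\}|=\sum_{j\notin T}|T\cap[0,j]|.
\]
So the quantity to bound below is $\sum_{j\in[0,N-1]\setminus T}|T\cap[0,j]|$. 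The key point is that $T$ (the set of indices where $x_k\ge 1/2$) cannot be too large \emph{and} cannot be confined to the far right end of $[0,N-1]$: by (\ref{e:sum}) we have $|T|\le 2\sum x_i\le 2N(1-\vare)$, hence $|[0,N-1]\setminus T|\ge (2\vare-1)N$ is not directly useful, so instead I would use (\ref{e:large}) to force many elements of $T$ (or at least of $B$) to appear early. More precisely, by Lemma~\ref{l:LargeInterval}, $B$ is syndetic: $(\lceil 1/\vare\rceil\delta N, N)\cap\bbz\subseteq \sum_{3\lceil 1/\vare\rceil} B$, and combining this with Lemma~\ref{l:BT} and the subadditivity of $D_T$ (Lemma~\ref{l:Subadditive}) lets one bound $D_T(m)$ for \emph{every} $m$ in that long interval by roughly $6\lceil 1/\vare\rceil\cdot N(\delta+\tfrac{\log 2}{\log|\f|})$, which is $\ll_\vare N\delta + N/\log|\f|$, i.e.\ small.

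The strategy then is a dichotomy. First bound $|T|$: since $D_T(k)=|(T+k)\cap[0,N-1]|-|T\cap(T+k)\cap[0,N-1]|$ roughly measures how much $T$ fails to be shifted into itself, and all these failures are small for $k$ in a long interval, $T$ must be ``almost shift-invariant'' by translations spanning $(\lceil1/\vare\rceil\delta N,N)$; one deduces $T$ is essentially an interval (or a union controlled by a small exceptional set) of some length $\tau N$. On the other hand, (\ref{e:large}) with $l$ near $N$ forces $\sum_{i=0}^{N-1}x_i\ge$ (something like) $\vare N$ worth of mass concentrated via (\ref{e:size}) into the first part of the sequence, while (\ref{e:sum}) caps the total; together these pin the ``center of mass'' of the $x_i$'s, which in turn pins where $T$ can sit. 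The cleanest route: if $T$ were an exact interval $[a,a+\tau N)$, then $\sum_{j\notin T}|T\cap[0,j]|\ge (N-a-\tau N)\cdot \tau N$ (counting $j$ to the right of $T$), so unless $T$ hugs the right end we get a term of size $\gtrsim \tau(1-\tau) N^2$; and a symmetric contribution $\gtrsim a\cdot$(something) from $j$ to the left when $T$ does hug the right end — but $T$ hugging the right end contradicts (\ref{e:large}) applied with $l\approx N$, since that needs enough $x_i$'s (hence roughly enough of $T\cup B$) in $[0,l-1]$ for every large $l$. Quantitatively one extracts $\tau\gtrsim \vare$ and $\min(a, N-a-|T|)\gtrsim \vare N$ or so, yielding the claimed $N\vare^3/16$ after absorbing the syndeticity overhead $\lceil 1/\vare\rceil$.

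The main obstacle I expect is the passage from ``$D_T(k)$ small on a long interval'' to an \emph{effective} structural statement about $T$ (``$T$ is within a small symmetric difference of an interval''), with constants good enough to survive division by the $\lceil 1/\vare\rceil$ factor and still beat $\vare^3/16$; this is exactly the delicate combinatorial heart borrowed from Lindenstrauss--Varj\'u~\cite{LV}, and it is where the hypotheses $\delta\ll\vare^2$ and $|\f|\gg_\vare 1$ get used, to make the error term $N(\delta+\tfrac{\log 2}{\log|\f|})$ in Lemma~\ref{l:BT} genuinely negligible compared to $\vare^2 N$. A secondary technical point is bookkeeping the truncation to $[0,N-1]$ in all the shift arguments (the $j-k\ge 0$ constraint and the $[0,N-1-k]$ windows), which is routine but must be done carefully so the boundary losses are $O(N\delta)$ and not $O(N)$.
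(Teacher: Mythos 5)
Your opening identity
\[
\sum_{k=0}^{N-1} D_T(k)=\sum_{j\in[0,N-1]\setminus T}|T\cap[0,j]|
\]
is correct, and it is essentially the same double-count the paper performs (the paper restricts to $k\in T\cap[0,N\vare/4-1]$ and $j\in[N\vare/4-1,N-1]\setminus T$ to arrive at $\sum_{j}D_T(j)\ge|T\cap[0,N\vare/4-1]|\cdot|[N\vare/4-1,N-1]\setminus T|$). From that point on, however, your argument goes off in the wrong direction and does not close.

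The concrete gaps. First, you try to use Lemma~\ref{l:LargeInterval}, Lemma~\ref{l:BT} and Lemma~\ref{l:Subadditive} to show that $D_T(m)$ is \emph{small} for every $m$ in a long interval. That is the opposite of what the lemma asserts, and it is also exactly the combination that the paper reserves for \emph{after} Lemma~\ref{l:AverageD} and Corollary~\ref{c:LargeD}, to produce the final contradiction in the proof of Proposition~\ref{p:ScalarSumExpansionRegular}. Feeding that observation back into a proof that the average of $D_T$ is large is circular: if you could really conclude ``$D_T$ is small everywhere relevant,'' you would have no way to then prove the average is $\gg N\vare^3$. The derived claim that ``$T$ is almost an interval'' is also false in spirit: for an actual interval $T=[a,a+L)$ one has $D_T(k)\approx\min(k,L)$, which is \emph{large}, not small, for $k$ up to $L$; so small $D_T$ on a long range says $T$ is far from being a single interval of significant length. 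Second, your fallback when ``$T$ hugs the right end'' fails outright: if $T\subseteq[N-\tau N,N)$ then $\sum_{j\notin T}|T\cap[0,j]|=0$, and there is no ``symmetric contribution from $j$ to the left.'' What rules out $T$ living near the right end is a quantitative statement you never derive: using (\ref{e:large}) \emph{together with} (\ref{e:ContraryAssumption}) (the latter gives $\sum_{i\notin T}x_i\le N(\delta+\tfrac{\log 2}{\log|\f|})$), the paper deduces
\[
|T\cap[0,l-1]|\ge l\vare-N\Bigl(\delta+\tfrac{\log 2}{\log|\f|}\Bigr)\quad\text{for }N\delta\le l\le N,
\]
and in particular $|T\cap[0,N\vare/4-1]|\ge N\vare^2/8$ under $\delta\ll\vare^2$, $|\f|\gg_\vare 1$. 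This is the key input you are missing, and it is the only place (\ref{e:ContraryAssumption}) is needed; your sketch never invokes (\ref{e:ContraryAssumption}) at all. Third, you correctly note the crude bound $|T|\le 2\sum x_i\le 2N(1-\vare)$ is useless, but you do not replace it; the paper instead gets the useful bound $|T|\le N(1-\vare+\vare^2/8)$ again from (\ref{e:ContraryAssumption}), via $N(\delta+\tfrac{\log 2}{\log|\f|})\ge\sum_{i\in T}(1-x_i)\ge|T|-\sum_i x_i$. With the lower bound on $|T\cap[0,N\vare/4-1]|$ and this upper bound on $|T|$ in hand, your own identity (or the paper's windowed double-count) finishes in one line; without them, the proposal does not prove the lemma.
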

\begin{proof}
By (\ref{e:ContraryAssumption}) we have
\be\label{e:Complement}
N\left(\delta+\frac{\log 2}{\log |\f|}\right)\ge \sum_{i=0}^{N-1}\min(x_i,1-x_i)\ge \sum_{i\in [0,N)\setminus L} x_{i}.
\ee

By (\ref{e:large}), for any integer $l\in [N\delta,N]$, we have
\[
l\vare\le \sum_{i=0}^{l-1} x_i\le \sum_{i\in [0,N)\setminus L} x_{i}+ |L\cap [0,l)|.
\]
Hence by (\ref{e:Complement}) we have
\[
|L\cap[0,l)|\ge l\left(\vare-\frac{N}{l}\left(\delta+\frac{\log 2}{\log |\f|}\right)\right).
\]
Suppose $\max(\delta,\frac{\log 2}{\log |\f|})<\vare^2/16$. Then we have
\be\label{e:TLowerBound}
|L\cap [0,N\vare/4)| \ge N \vare^2/8.
\ee
For any $i\in (N\vare/4,N)$ and any $k\in L\cap [0,N\vare/4)$ we have
$
i\in (L\cap [0,N\vare/4))+(i-k).
$
Hence for any integer $i\in (N\vare/4,N)$ we have
\[
\sum_{0\le j < \frac{N(1-\vare)}{4}}\one_{(L\cap [0,\frac{N\vare}{4})+j)\setminus L}(i)\ge |L\cap [0,\textstyle\frac{N\vare}{4})| \one_{[0,N)\setminus L}(i),
\]
where $\one_Y$ is the characteristic function of a set $Y$.
By adding over $i$ in the above range we get
\be\label{e:LowerBoundSum}
\sum_{j=0}^{N(1-\vare/4)} D_L(j) \ge |L\cap [0,N\vare/4)|\cdot | (N\vare/4,N)\setminus L|.
\ee
By (\ref{e:ContraryAssumption}) we have
\[
N\left(\delta+\frac{\log 2}{\log |\f|}\right)\ge \sum_{i=0}^{N-1}\min(x_i,1-x_i)\ge \sum_{i\in L}(1-x_i)\ge |L|-\sum_{i=0}^{N-1} x_i\ge |L|-(1-\vare)N
\]
Therefore, by our assumption $\max(\delta,\frac{\log 2}{\log |\f|})<\vare^2/16$, we have
\be\label{e:UpperBoundT}
|L|\le N(1-\vare+\vare^2/8).
\ee
Hence by (\ref{e:TLowerBound}), (\ref{e:LowerBoundSum}), and (\ref{e:UpperBoundT}) we have
\[
\sum_{j=0}^{N-1} D_L(j)\ge (N\vare^2/8) \cdot N\left((1-\vare/4)-(1-\vare+\vare^2/8)\right)\ge   N^2 \vare^3/16.
\]
\end{proof}
\begin{cor}\label{c:LargeD}
For some integer $j_0\in [N\vare^3/32, N)$, we have $D_L(j_0)\ge N \vare^3/32$ if $0<\vare\ll 1$, $0<\delta\ll \vare^2$ and $1\ll_{\vare} |\f|$.
\end{cor}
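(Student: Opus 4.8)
The plan is to read off Corollary~\ref{c:LargeD} from Lemma~\ref{l:AverageD} by a straightforward pigeonhole argument, after trimming the range of summation so as to discard the small indices, whose contribution is too small to matter.

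First I would record the crude pointwise bound $D_T(k)\le N$, valid for all $0\le k\le N-1$: indeed $D_T(k)$ counts a subset of $T\cap[0,N-1-k]\subseteq[0,N-1]$, which has at most $N$ elements. Consequently the indices $k$ with $0\le k< N\vare^3/32$ together contribute at most $(N\vare^3/32)\cdot N=N^2\vare^3/32$ to the sum $\sum_{k=0}^{N-1}D_T(k)$.

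Next, invoking Lemma~\ref{l:AverageD} under the stated hypotheses $0<\vare\ll 1$, $\delta\ll\vare^2$ and $1\ll_{\vare}|\f|$, we have $\sum_{k=0}^{N-1}D_T(k)\ge N^2\vare^3/16$. Subtracting the contribution of the small indices leaves $\sum_{N\vare^3/32\le k\le N-1}D_T(k)\ge N^2\vare^3/16-N^2\vare^3/32=N^2\vare^3/32$. This remaining sum runs over at most $N$ values of $k$, so by pigeonhole there is an integer $j_0\in[N\vare^3/32,N-1]$ with $D_T(j_0)\ge N^2\vare^3/(32N)=N\vare^3/32$, which is exactly the assertion.

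I do not expect any genuine obstacle here; the only point to watch is the harmless bookkeeping with ceilings at the endpoints of the interval $[N\vare^3/32,N-1]$, and one could even use the sharper pointwise bound $D_T(k)\le N-k$ if a slightly cleaner constant were desired. Everything goes through with room to spare.
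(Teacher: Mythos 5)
Your argument is correct and is essentially identical to the paper's own proof: both subtract the crude bound $N\cdot(N\vare^3/32)$ for the small-index contribution from the total $N^2\vare^3/16$ supplied by Lemma~\ref{l:AverageD}, then pigeonhole over the at-most-$N$ remaining indices. Nothing further to add.
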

\begin{proof}
By Lemma~\ref{l:AverageD} we have
\[
\sum_{j\in [N\vare^3/32, N-1]} D_L(j) \ge \sum_{i=0}^{N-1} D_L(j)- (N \vare^3/32)(N)\ge N^2\vare^3/16-N^2\vare^3/32=N^2\vare^3/32.
\]
And so for some $j_0\in [N\vare^3/32, N)$ we have $D_L(j_0)\ge N\vare^3/32.$
\end{proof}
\begin{proof}[Proof of Proposition~\ref{p:ScalarSumExpansionRegular}]
Suppose $\max(\delta,\frac{\log 2}{\log|\f|})<\vare^4/512$, and for some $A$ the assertion of Proposition~\ref{p:ScalarSumExpansionRegular} does not hold. Then we consider $J$ and $L$ as above. Hence by Corollary~\ref{c:LargeD} we have
\[
D_L(j_0)\ge  N \vare^3/32
\]
for some integer $j_0\in [N\vare^3/32, N)$. On the other hand, by Lemma~\ref{l:LargeInterval}, since $j_0\ge N\vare^3/32>N \lceil 1/\vare \rceil \delta$, for some integer $t$ in $[1,3\lceil 1/\vare\rceil]$ there are $t$ many elements $b_1,\ldots,b_t$ of $B$ such that
\[
j_0=b_1+\cdots+b_t.
\]
Hence by Lemma~\ref{l:Subadditive},
\be\label{e:DLowerBound}
D_L(b_i)> N\vare^4/100
\ee
 for some $i$.

On the other hand, by Lemma~\ref{l:BT}, we have that for any $b\in J$
\[
D_L(b)\le 2N\left(\delta+\frac{\log 2}{\log |\f|}\right)< N\vare^4/128,
\]
which contradicts (\ref{e:DLowerBound}).
\end{proof}

\subsection{Proof of Theorem~\ref{t:ScalarSumExpansion}: Scalar-Sum-Product expansion.}
 As in~\cite{BG} (also see~\cite{Bor} or \cite[Section 2.3]{SG1}), we start by a regularization process. The $\pfr$-adic filtration $\{\pi_{\pfr^N}(\pfr^{i}\ocal)\}_{i=1}^N$ induces an $|\f|$-regular rooted tree structure (with $N$-levels) on $\pi_{\pfr^N}(\ocal)$. So by a similar argument as the above mentioned articles we get the following large regular subset of $A$.
\begin{lem}\label{l:RegularSubset}
Let $0<\delta< \vare< 1$  and $|\f|\gg_{\vare,\delta} 1$. Then for $0<\delta'\le\vare\delta/4$ the following holds: 
Let $A\subseteq \pi_{\pfr^N}(\ocal)$. Suppose that $A$ satisfies the following properties:
\begin{enumerate}
\item $|\pi_{\pfr^i}(A)|\ge |\f|^{i\vare}$ for any $N\delta' \le i\le N$,
\item $|A+A|\le |A| |\f|^{N\delta'}$.
\end{enumerate}
Then there is $A'\subseteq A$ such that 
\begin{enumerate}
\item $A'$ is $(m_0,\ldots,m_{N-1})$-regular.
\item $|A'|\ge |A|/(2\log |\f|)^N$.
\item $|\pi_{\pfr^i}(A')|\ge |\f|^{i\vare/2}$ for $N\delta\le i\le N$.  
\end{enumerate}   
\end{lem}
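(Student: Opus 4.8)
The plan is to split the proof into two essentially independent parts: a purely combinatorial extraction of a large regular subset $A'$ of $A$ (yielding conclusions (1) and (2), and using nothing about $A$ beyond $A\subseteq\pi_{\pfr^N}(\ocal)$), followed by a separate verification of the projection bound $|\pi_{\pfr^i}(A')|\ge|\f|^{i\vare/2}$, which is the only place the small-doubling hypothesis $|A+A|\le|A||\f|^{N\delta'}$ is really used.

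\emph{Extraction of a regular subset.} For $S\subseteq\pi_{\pfr^N}(\ocal)$ and $0\le n\le N$ write $S^{(n)}:=\pi_{\pfr^n}(S)$, and for $v\in S^{(n)}$ with $n<N$ let $d_n(v,S)$ be the number of children of $v$ at level $n+1$ lying in $S^{(n+1)}$, so $\sum_{v\in S^{(n)}}d_n(v,S)=|S^{(n+1)}|$. First I would build $A'$ by regularizing the $N$ branchings in the order $n=N-1,N-2,\ldots,0$. Suppose branchings $N-1,\ldots,n+1$ have already been regularized, so the current set $S$ has $d_j(v,S)=m_j$ for all $v\in S^{(j)}$, $j>n$, and in particular $|S|=|S^{(n+1)}|\prod_{j>n}m_j$. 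To regularize branching $n$, choose $m_n\in\{1,\ldots,|\f|\}$ maximizing $t\mapsto t\cdot|\{v\in S^{(n)}:d_n(v,S)\ge t\}|$, then delete every level-$n$ node with fewer than $m_n$ children (along with everything above it) and, for each surviving level-$n$ node, keep exactly $m_n$ of its children (deleting the subtrees above the others). Since $|S^{(n+1)}|=\sum_{t=1}^{|\f|}|\{v:d_n(v,S)\ge t\}|\le(1+\log|\f|)\max_t\big(t\cdot|\{v:d_n(v,S)\ge t\}|\big)$, this step retains at least a $1/(1+\log|\f|)\ge1/(2\log|\f|)$ fraction of the leaves; and because it only removes subtrees rooted at levels $n$ and $n+1$, it leaves untouched the regular subtrees below level $n+1$, so branchings $N-1,\ldots,n+1$ stay regular while branching $n$ becomes regular with parameter $m_n$. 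After all $N$ steps this gives $A'\subseteq A$ that is $(m_0,\ldots,m_{N-1})$-regular with $|A'|\ge|A|/(2\log|\f|)^N$, which is conclusions (1) and (2).

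\emph{The projection bound.} Fix $i$ with $N\delta\le i\le N$; since $\delta'\le\vare\delta/4<\delta$, the first hypothesis gives $|\pi_{\pfr^i}(A)|\ge|\f|^{i\vare}$. Let $H\le\pi_{\pfr^N}(\ocal)$ be the kernel of $\pi_{\pfr^i}$. For a fixed $H$-coset $c_0$ meeting $A$, the sets $(A\cap c)+(A\cap c_0)$, as $c$ runs over the $|\pi_{\pfr^i}(A)|$ cosets meeting $A$, lie in pairwise distinct cosets $c+c_0$ and each has at least $|A\cap c_0|$ elements; hence $|A+A|\ge|\pi_{\pfr^i}(A)|\,|A\cap c_0|$, so by the second hypothesis every $H$-coset satisfies $|A\cap c_0|\le|A+A|/|\pi_{\pfr^i}(A)|\le|A|\,|\f|^{N\delta'-i\vare}$. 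Each $\pi_{\pfr^i}$-fiber of $A'$ has exactly $\prod_{j=i}^{N-1}m_j$ elements by regularity and lies inside one $H$-coset, so $\prod_{j=i}^{N-1}m_j\le|A|\,|\f|^{N\delta'-i\vare}$, and therefore
\[
|\pi_{\pfr^i}(A')|=\frac{|A'|}{\prod_{j=i}^{N-1}m_j}\ge\frac{|A|/(2\log|\f|)^N}{|A|\,|\f|^{N\delta'-i\vare}}=\frac{|\f|^{i\vare-N\delta'}}{(2\log|\f|)^N}.
\]
Since $i\ge N\delta$ and $\delta'\le\vare\delta/4$ we get $i\vare-N\delta'-i\vare/2=i\vare/2-N\delta'\ge N\vare\delta/2-N\vare\delta/4=N\vare\delta/4$, and $|\f|^{\vare\delta/4}\ge2\log|\f|$ for $|\f|\gg_{\vare,\delta}1$, so $|\pi_{\pfr^i}(A')|\ge|\f|^{i\vare/2}$: conclusion (3).

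\emph{Where the difficulty lies.} The extraction is standard dyadic pigeonholing; the only subtlety there is the order of the pruning — going from the deepest branching downward is what ensures that regularizing one branching cannot disturb the regularity already achieved at deeper ones (a top-down pass would fail, since deleting a node lowers its parent's child-count). The real content is the projection bound: the size estimate $|A'|\ge|A|/(2\log|\f|)^N$ on its own says nothing about $|\pi_{\pfr^i}(A')|$ for small $i$, since a regular subset could a priori be concentrated in a single $\ker\pi_{\pfr^i}$-coset, and it is precisely the small-doubling hypothesis, through the coset inequality $|A+A|\ge|\pi_{\pfr^i}(A)|\max_c|A\cap c|$, that forces every fiber of $A$ — hence every fiber of $A'$ — to be small and keeps the low-level projections of $A'$ large. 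I expect this coset estimate, rather than any delicate analysis, to be the crux.
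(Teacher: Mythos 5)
Your proof is correct and follows essentially the same route as the paper's: the paper cites the dyadic-pigeonhole extraction of a regular subset (from [SG1, Section 2.3]) and then combines regularity of $A'$, the lower bound on $|\pi_{\pfr^i}(A)|$, and the coset inequality $|A+A|\ge|\pi_{\pfr^i}(A)|\cdot|(\text{fiber})|$ to bound $|\pi_{\pfr^i}(A')|$ from below. The only cosmetic difference is that the paper argues by contradiction with the maximal bad level $\bar n:=\max\{i:|\pi_{\pfr^i}(A')|<|\f|^{i\vare/2}\}$, whereas you verify the bound directly for each $i\in[N\delta,N]$; the underlying estimate is identical.
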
 
\begin{proof}
By \cite[Section 2.3]{SG1}, there is a subset $A'\subseteq A$ such that $|A'|\ge |A|/(2\log |\f|)^N$ and $A'$ is an $(m_0,\ldots,m_{N-1})$-regular set. Let $\bar{n}:=\max\{i\in [0,N-1]|\h |\pi_{\pfr^i}(A')|<|\f|^{i\vare/2}\}$. To show that $A'$ satisfies the above three conditions, it is enough to show that, if $0<\delta'\le \vare \delta/4$ and $|\f|\gg_{\vare,\delta} 1$, then $\bar{n}< N\delta$. 

Suppose to the contrary that $\bar{n}\ge N\delta >N\delta'$. Then by the assumption $|\pi_{\pfr^{\bar{n}}}(A')|\ge |\f|^{\bar{n}\vare}$. On the other hand, there is a subset $A''\subseteq A'$ such that $|\pi_{\pfr^{\bar{n}}}(A'')|=1$ and 
\[
|A''|=\frac{|A'|}{|\pi_{\pfr^{\bar{n}}}(A')|}> \frac{|A|}{(2\log |\f|)^N |\f|^{\bar{n}\vare/2}}.
\]
Therefore we have
\[
|A||\f|^{N\delta'}\ge |A+A|\ge |A''||\pi_{\pfr^{\bar{n}}}(A)|\ge \frac{|A| |\f|^{\bar{n}\vare}}{(2\log |\f|)^N |\f|^{\bar{n}\vare/2}},
\]
which implies that 
\be\label{e:NeededInequality}
(2\log |\f|)^N\ge |\f|^{\bar{n}\vare/2-N\delta'}\ge |\f|^{N(\delta\vare/2-\delta')}\ge |\f|^{N(\delta\vare/4)}.
\ee
For $|\f|\gg_{\vare,\delta} 1$ (so that $2 \log |\f|< |\f|^{\delta \vare/8}$), (\ref{e:NeededInequality}) implies that $\vare/8\ge \vare/4$, which is a contradiction. 
\end{proof}
\begin{proof}[Proof of Theorem~\ref{t:ScalarSumExpansion}]
Let $\delta_r(\vare/2)$ ($r$ stands for regular) be such that $0<\delta_r(\vare/2)\ll (\vare/2)^4$ where the implied constant is given by Proposition~\ref{p:ScalarSumExpansionRegular}. Suppose $|\f|\gg_{\vare} 1$, where the implied constant is given by Lemma~\ref{l:RegularSubset} for $\vare/2$ and $\delta_r(\vare/2)$. Now let $\delta'\ll(\vare/2)\delta_r(\vare/2)$ be given by Lemma~\ref{l:RegularSubset}.\footnote{To avoid further confusion with the $\delta$ used in Lemma~\ref{l:RegularSubset}, we are using $\delta'$, here. This is, in fact, supposed to be the claimed $\delta$ in Theorem~\ref{t:ScalarSumExpansion}.} We claim $\delta'$ satisfies the desired conditions.   

    By the choice of $\delta'$ and Lemma~\ref{l:RegularSubset}, there is $A'\subseteq A$ such that
    \begin{enumerate}
    \item $|A'|$ is an $(m_0,\ldots,m_{N-1})$-regular subset.
    \item $|\pi_{\pfr^i}(A')|\ge |\f|^{i\vare/2}$ for $N\delta_r(\vare/2) \le i\le N$,
    \item $|A'|\ge |A|/(2\log |\f|)^N$.
    \end{enumerate}
Next we modify $A'$ a bit, if necessary, to make sure that $m_0$ and $m_1$ are at least $2$.

If $m_0=1$ and $m_1>1$, then $A'+\{a_{01},a_{02}\}$ is a $(2,m_1,\ldots,m_{N-1})$-regular subset of $A+A$.  

If $m_0=m_1=1$, then $A'+\{a_{11},a_{12}\}+\{a_{01},a_{12}\}$ is a $(2,2,m_2,\ldots,m_{N-1})$-regular subset of $A+A+A$.

If $m_0>1$ and $m_1=1$, then 
\begin{enumerate}
\item there is a subset $X_0$ of $A$ such that $|X_0|=|\pi_{\pfr}(X_0)|=|\pi_{\pfr}(A)|$,
\item there is a $(1,1,m_2,\ldots,m_{N-1})$-regular subset $A'_0$ of $A'$
\end{enumerate}
Then $A'_0+\{a_{11},a_{12}\}+X_0$ is a regular $(|\pi_{\pfr}(A)|,2,m_2,\ldots,m_{N-1})$-regular subset of $A+A+A$.

So in all the cases we get an $(m_0,\ldots,m_{N-1})$-regular subset $A'$ of $A+A+A$ such that 
\begin{enumerate}
\item $m_0,m_1>1$.
\item $|\pi_{\pfr^i}(A')|\ge |\f|^{i\vare/2}$ for $N\delta_r(\vare/2)\le i\le N$.
\item $|A'|\ge |A|/(2\log |\f|)^N$.
\end{enumerate}
If $|\langle A\rangle_6|\ge |A||\f|^{N\delta}$ (for small enough $\delta$ to be determined later), we are done. So suppose this does not hold. In particular, $|A+A+A|\le |A||\f|^{N\delta}$. Hence $|A+A+A|\le |\f|^{N(1-\vare+\delta)}$. So assuming $\delta<\vare/2$, we have that $|A'|\le |\f|^{N(1-\vare/2)}$. Hence $A'$ satisfies all the conditions of Proposition~\ref{p:ScalarSumExpansionRegular}. Therefore we have
\be\label{e:RegularSubsetExpansion}
\max_{\omega\in \Omega, x\in A'-A'}|A'+\pi_{\pfr^N}(\omega)x A'|\ge |A'||\f|^{N\delta_r(\vare/2)}.
\ee
Since at least one of $a_{01}, a_{02}$ is a unit, we have that 
\[
|\langle A\rangle_6+\pi_{\pfr^N}(\omega)\langle A\rangle_6|\ge |A'+\pi_{\pfr^N}(\omega)x A'|.
\]
Therefore we have
\[
\max_{\omega\in \Omega} |\langle A\rangle_6+\pi_{\pfr^N}(\omega) \langle A\rangle_6|\ge |A'| |\f|^{N\delta_r(\vare/2)}\ge |A|\left(\frac{|\f|^{\delta_r(\vare/2)}}{2\log |\f|}\right)^N.
\]
Suppose $|\f|\gg_{\vare} 1$ so that $|\f|^{\delta_r(\vare/2)/2}\ge 2\log |\f|$. Hence we get
\[
\max_{\omega\in \Omega} |\langle A\rangle_6+\pi_{\pfr^N}(\omega) \langle A\rangle_6|\ge |A||\f|^{N\delta_r(\vare/2)/2}\ge |A||\f|^{N\delta'}.
\]
\end{proof}

\subsection{Proof of Theorem~\ref{t:BoundedGenerationScalarSum}: a scalar-sum-product set contains a large congruence set.}
\begin{prop}\label{p:BoundedGenerationScalarSum}
For any $0<\vare_1\ll \vare_2\ll 1$, $0<\delta\ll \vare_1^5$, and positive integer $1\ll_{\vare_1} C$, and any finite extension $K$ of $\bbq_p$ with large, depending on $\vare_1$, residue field $\f$ the following holds:

Let $\ocal$ be the ring of integers of $K$, and $\pfr$ be a uniformizing element of $K$. Let $\Omega\subseteq \ocal$, and suppose $\pi_{\pfr}$ induces a bijection between $\Omega\subseteq \ocal$ and $\f^{\times}$. Suppose $A\subseteq \pi_{\pfr^N}(\ocal)$ such that
\begin{enumerate}
\item $|\pi_{\pfr^i}(A)|\ge |\f|^{i\vare_1}$ for any $N\delta\le i\le N$.
\item there are $a_{01}, a_{02}, a_{11}, a_{12}\in A$ such that $a_{i1}-a_{i2}\in \pi_{\pfr^{N}}(\pfr^i\ocal\setminus \pfr^{i+1}\ocal)$.
\end{enumerate}
Then
\[
\pi_{\pfr^N}(\pfr^{\lceil\vare_2 N\rceil}\ocal)\subseteq \gen{A}{C}+\pi_{\pfr^N}(\omega_1)\gen{A}{C}+\cdots+\pi_{\pfr^N}(\omega_C)\gen{A}{C},
\]
for some $\omega_i\in \prod_C(\Omega\cup\{1\})$.
\end{prop}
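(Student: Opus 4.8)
The plan is the usual two–stage scheme: first \emph{enlarge} $A$ by iterating the expansion result of Theorem~\ref{t:ScalarSumExpansion}, and then \emph{close the gap} by a Fourier/exponential–sum argument that covers the congruence subgroup in boundedly many further steps.

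\textbf{Stage 1 (enlargement).} The point is that hypotheses (1)--(2) are stable under the operation $A \rightsquigarrow \gen{A}{6}+\pi_{\pfr^N}(\omega)\gen{A}{6}$ as long as the cardinality stays $\le |\f|^{N(1-\vare_1)}$. Indeed, by (2) some $u_0\in\{a_{01},a_{02}\}$ is a unit, and since $\gen{A}{6}\supseteq \prod_6 A-\prod_6 A\supseteq u_0^5 A-u_0^5 A=u_0^5(A-A)\ni 0$, the set $\gen{A}{6}$ — hence also $\gen{A}{6}+\pi_{\pfr^N}(\omega)\gen{A}{6}$ — still has every $\pfr$-adic projection of size at least that of $A$, and, using $u_0^5(a_{01}-a_{02})$, $u_0^5(a_{11}-a_{12})$ together with $0$, still contains a pair of elements whose difference has valuation exactly $i$ for $i=0,1$. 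So I apply Theorem~\ref{t:ScalarSumExpansion} with $\vare:=\vare_1$ repeatedly; each application multiplies the cardinality by $|\f|^{N\delta}$, so starting from $|A|\ge|\pi_{\pfr^N}(A)|\ge|\f|^{N\vare_1}$ and stopping the first time the cardinality exceeds $|\f|^{N(1-\vare_1)}$ uses at most $O_{\vare_1}(1)$ steps (as $\delta\ll\vare_1^5$). Unwinding the nested sum--products together with the $O_{\vare_1}(1)$ chosen scalars, the resulting set $S$ lies in a set of the form $\gen{A}{C_0}+\pi_{\pfr^N}(\omega_1')\gen{A}{C_0}+\cdots+\pi_{\pfr^N}(\omega_{C_0}')\gen{A}{C_0}$ with $C_0=O_{\vare_1}(1)$ and $\omega_i'\in\prod_{C_0}(\Omega\cup\{1\})$; moreover $S=-S$, $0\in S$, $S$ contains a unit, $|\pi_{\pfr^i}(S)|\ge|\f|^{i\vare_1}$ for $i$ in range, and $|S|>|\f|^{N(1-\vare_1)}$.

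\textbf{Stage 2 (reduction).} Put $m:=\lceil\vare_2 N\rceil$ and $H:=\pi_{\pfr^N}(\pfr^m\ocal)$. I reduce to two claims: (i) some bounded sum--product set of $A$ contains an element $z$ of valuation exactly $m$; (ii) some bounded sum--product-scalar set $S^\ast$ of $A$ satisfies $\pi_{\pfr^{N-m}}(S^\ast)=\pi_{\pfr^{N-m}}(\ocal)$. Granting these, $zS^\ast$ again lies in a bounded sum--product-scalar set of $A$, and since $v(z)=m$ gives $z\,\pfr^{N-m}\ocal=\pfr^N\ocal$, one gets $\pi_{\pfr^N}(zS^\ast)\supseteq\pi_{\pfr^N}(z\ocal)=H$, which is the conclusion once the bounded constants are collected into $C$. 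Claim (i) is elementary: pass to a regular subset $S'\subseteq S$ (the regularization step of the proof of Lemma~\ref{l:RegularSubset}), so $|S'|>|\f|^{N(1-2\vare_1)}$ and, for its profile $(m_i)$, the set $B:=\{i:m_i\ge2\}$ has $|B|>(1-2\vare_1)N$; since $\vare_1\ll\vare_2$, a one-line pigeonhole inside $[0,m]$ gives $m=i_1+i_2$ with $i_1,i_2\in B$, and the product of the two corresponding elements of $S'-S'$ then has valuation $m$ and lies in $(S-S)(S-S)$, a bounded sum--product set of $A$.

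\textbf{The main obstacle} is claim (ii): turning ``$S$ is large'' into ``some bounded sum--product-scalar set is onto $\ocal/\pfr^{N-m}$'', with all bounds independent of $N$ and $|\f|$; this is where $\vare_1\ll\vare_2$ is really used, as it supplies the slack $|S|\ge|\ocal/\pfr^{N-m}|\cdot|\f|^{N(\vare_2-\vare_1)}$. The plan is Fourier-analytic on the abelian group $\ocal/\pfr^{N-m}$, organising characters by conductor. Characters of conductor $\le L_0$ for a constant $L_0=O_{\vare_1}(1)$ are handled by first making the set full modulo $\pfr^{L_0}$: the finite–field sum--product theorem (Bourgain--Glibichuk--Konyagin) makes a bounded sum--product set of $\pi_{\pfr}(A)$ equal to $\f$, and then \cite[Proposition~3.3]{BG}, applied in the fixed finite ring $\ocal/\pfr^{L_0}$, upgrades this to fullness modulo $\pfr^{L_0}$. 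For characters of larger conductor, the multiplicative richness of $S$ — it contains $\gen{A}{6}$-style products of a set not concentrated modulo $\pfr$ — yields, via Weil/Bourgain-type exponential–sum estimates, a power-saving decay $|\widehat{\pcal_S}(\chi)|\le|\f|^{-c(\vare_1)N}$, and the scalar dilations $\pi_{\pfr^N}(\omega_i)$ are inserted so that this decay is inherited at every relevant nontrivial character by the combined measure; a bounded number of additional sum--products (convolutions) then kills all nontrivial Fourier coefficients of the push-forward to $\ocal/\pfr^{N-m}$, i.e. makes it pointwise positive, which is exactly the surjectivity in (ii). Keeping the number of convolutions and scalars bounded in terms of $\vare_1$ alone — which the slack above makes possible — and disposing of the finitely many small $N$ or $|\f|$ directly by \cite[Proposition~3.3]{BG}, completes the proof.
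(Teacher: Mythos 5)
Your Stage 1 is essentially identical to the paper's: iterate Theorem~\ref{t:ScalarSumExpansion}, noting that the two hypotheses are stable under $A\rightsquigarrow\gen{A}{6}+\pi_{\pfr^N}(\omega)\gen{A}{6}$, and stop after $O_{\vare_1}(1)$ steps once the cardinality exceeds $|\f|^{N(1-\vare_1)}$. (The paper is terse here; your verification that hypotheses (1)--(2) persist via the unit $u_0$ is a correct and helpful expansion of what the paper leaves implicit.)

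Stage 2 is where you diverge, and this is where the proposal has real gaps. The paper finishes in one move: once the scalar--sum--product set $S$ has cardinality $\ge|\f|^{N(1-\vare_1)}$, Lemma~\ref{l:LargeSets} (a direct corollary of Bourgain--Gamburd's Lemma A.1, quoted as Lemma~\ref{l:LargeEntropy}) applied with $\vare:=\vare_2$ and $\delta:=\vare_1\ll\vare_2$ yields $\pi_{\pfr^N}(\pfr^{\lceil\vare_2 N\rceil}\ocal)\subseteq\gen{S}{200}$, and one then simply unfolds $\gen{S}{200}$ into the required form $\gen{A}{C}+\sum_j\pi_{\pfr^N}(\omega_j)\gen{A}{C}$. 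No decomposition into ``element of exact valuation $m$'' plus ``surjectivity mod $\pfr^{N-m}$'' is needed; Lemma~\ref{l:LargeSets} gets the thick congruence ball directly by passing to a non-concentrated slice of $S$.

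Your replacement for that step has concrete problems. First, the reference to \cite[Proposition~3.3]{BG} for the ring $\ocal/\pfr^{L_0}$ is not available: that proposition is stated for $\bbz/p^n\bbz$, not for $\ocal/\pfr^{L_0}$ with $K$ an arbitrary finite extension, so invoking it there amounts to assuming a special case of the result you are trying to prove (this is precisely why the present paper exists). Second, the high-conductor estimate is asserted, not argued: there is no ``multiplicative richness'' that yields a Weil-type saving for a generic scalar--sum--product set $S$, and the mechanism by which the finitely many scalars $\omega_i$ can be chosen to kill all nontrivial characters of $\ocal/\pfr^{N-m}$ simultaneously (as opposed to one character at a time, or an $L^2$-average) is not explained. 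The correct way to make your (ii) work is exactly the combinatorial non-concentration argument encoded in Lemma~\ref{l:LargeEntropy}/\ref{l:LargeSets}, not a Weil bound. So you should drop claims (i)--(ii) and the Fourier plan and simply invoke Lemma~\ref{l:LargeSets} after Stage 1. As written, (i) is correct but unnecessary, and (ii) is both not established and re-derives, in a more complicated and $K$-non-uniform way, what Lemma~\ref{l:LargeSets} already gives for free.
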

\begin{proof}[Proof of Theorem~\ref{t:BoundedGenerationScalarSum} modulo Proposition~\ref{p:BoundedGenerationScalarSum}]
Let $\delta\ll \vare_1^{5m}$. Hence by Proposition~\ref{p:BoundedGenerationScalarSum} applied to the set $\pi_{\pfr^{N_m}}(A)$, where $N_m:=\lfloor \vare_2^{m-1}N\rfloor$,  we get that for a positive integer $1\ll_{\vare_1} k$ there are $\omega_i\in \prod_k(\Omega\cup\{1\})$ such that
\begin{align}
\notag \pi_{\pfr^{N_m}}(\pfr^{\lceil\vare_2^m N\rceil}\ocal)&\subseteq \pi_{\pfr^{N_m}}(\pfr^{\vare_2 N_m}\ocal)
\\
\notag &\subseteq \gen{\pi_{\pfr^{N_m}}(A)}{k}+\pi_{\pfr^{N_m}}(\omega_1)\gen{\pi_{\pfr^{N_m}}(A)}{k}+\cdots+\pi_{\pfr^{N_m}}(\omega_k)\gen{\pi_{\pfr^{N_m}}(A)}{k}
\\
\label{e:TopThickLevel} &\subseteq \pi_{\pfr^{N_m}}(\gen{A}{k}+\pi_{\pfr^{N}}(\omega_1)\gen{A}{k}+\cdots+\pi_{\pfr^{N}}(\omega_k)\gen{A}{k}),
\end{align}
For any $1\le j\le \vare_2^{-m}$, since $N\delta\le  j\vare_2^m N \le N$, by our assumption there is $x_j\in A-A$ such that 
\[
x_j\in \pfr^{\lfloor j\vare_2^m N\rfloor}\pi_{\pfr^N}(\ocal)\setminus \pfr^{\lfloor j\vare_2^m N\rfloor+1}\pi_{\pfr^N}(\ocal). 
\]   
Hence by (\ref{e:TopThickLevel}) we have
\begin{align*}
\pi_{\pfr^N}(\pfr^{\lceil\vare_2^m N\rceil}\ocal)\subseteq &
\gen{A}{k}+\pi_{\pfr^{N}}(\omega_1)\gen{A}{k}+\cdots+\pi_{\pfr^{N}}(\omega_k)\gen{A}{k}\\
&+ x_1(\gen{A}{k}+\pi_{\pfr^{N}}(\omega_1)\gen{A}{k}+\cdots+\pi_{\pfr^{N}}(\omega_k)\gen{A}{k})\\
&+\cdots\\
&+ x_{\lceil\vare_2^{-m}\rceil}(\gen{A}{k}+\pi_{\pfr^{N}}(\omega_1)\gen{A}{k}+\cdots+\pi_{\pfr^{N}}(\omega_k)\gen{A}{k}).
\end{align*}
And, since $A$ contains a unit and $\vare_1\le \vare_2$, we have that for a positive integer $1\ll_{m,\vare_1} C$
\[
\pi_{\pfr^N}(\pfr^{\lceil\vare_2^m N\rceil}\ocal)\subseteq \gen{A}{C}+\pi_{\pfr^{N}}(\omega_1)\gen{A}{C}+\cdots+\pi_{\pfr^{N}}(\omega_k)\gen{A}{C}
\]
\end{proof}
To prove Proposition~\ref{p:BoundedGenerationScalarSum}, let us start with a direct corollary of \cite[Lemma A.1]{BG}.
\begin{lem}\label{l:LargeEntropy}
Let $K$ be a finite extension of $\bbq_p$, $\ocal$ be the ring of integers of $K$, and $\pfr$ be a uniformizing element of $K$. Suppose $B\subseteq \pi_{\pfr^N}(\ocal)$ such that for any $1\le k\le N$,
\[
\max_{\xi}|\{x\in B|\h \pi_{\pfr^k}(x)=\xi\}|<|\f|^{-(3/4)k}|B|.
\]
Then $\gen{B}{200}=\pi_{\pfr^N}(\ocal)$.
\end{lem}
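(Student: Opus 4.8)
The plan is to reproduce inside the finite local ring $R:=\pi_{\pfr^N}(\ocal)=\ocal/\pfr^N\ocal$ the argument of \cite[Lemma A.1]{BG}, which is the special case $\ocal=\bbz_p$, $R=\bbz/p^N\bbz$. The ring $R$ is local with principal maximal ideal $\mathfrak{m}:=\pi_{\pfr^N}(\pfr\ocal)$, and $\mathfrak{m}^k=\pi_{\pfr^N}(\pfr^k\ocal)$, $\mathfrak{m}^N=0$, $\mathfrak{m}^k/\mathfrak{m}^{k+1}\cong\f$ for all $k$; the hypothesis on $B$ is exactly that $B$ is non-concentrated along the $\mathfrak{m}$-adic filtration. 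The only features of $\bbz/p^N\bbz$ that enter \cite{BG}'s proof are that its maximal ideal is principal, that the successive quotients of the filtration are one dimensional over the residue field, and that the filtration terminates at level $N$ --- all of which hold for $R$ regardless of the residue degree and of the ramification index $e$. So the strategy is to run their proof, the only point needing separate comment being the residue field.

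First I would handle that residue-field level. The case $k=1$ of the hypothesis gives $|\pi_{\pfr}(B)|>|\f|^{3/4}>|\f|^{1/2}$, so $\pi_{\pfr}(B)$ is contained in no coset of a proper subfield of $\f$ (these have at most $|\f|^{1/2}$ elements); by the sum-product theorem over finite fields in its bounded-generation form --- for $\f=\bbf_p$ this is the $\bbf_p$-sum-product used in \cite{BG} --- there is an absolute constant $C_0$ with $\gen{\pi_{\pfr}(B)}{C_0}=\f$, that is, $\gen{B}{C_0}$ surjects onto $R/\mathfrak{m}$. This is the sole place where the present statement needs an input beyond \cite{BG}.

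Next I would lift this surjectivity up the filtration along \cite{BG}'s induction, whose engine is that non-concentration makes the filtration split densely. Writing $G:=\{\,0\le m\le N-1:\ |\pi_{\pfr^{m+1}}(B)|>|\pi_{\pfr^m}(B)|\,\}$, one has $|\pi_{\pfr^N}(B)|\le|\f|^{|G|}$ while the case $k=N$ of the hypothesis gives $|\pi_{\pfr^N}(B)|>|\f|^{3N/4}$, so $|G|>3N/4$; and if $[a,b]$ is a maximal interval in $[0,N-1]\setminus G$ then $|\pi_{\pfr^{b+1}}(B)|=|\pi_{\pfr^a}(B)|\le|\f|^a$ forces $b<\tfrac{4}{3}a$, so the gaps of $G$ are short relative to their position (in particular $0,1\in G$). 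Hence for each $m\in G$ there are $x,y\in B$ with $x-y\in\pi_{\pfr^N}(\pfr^m\ocal\setminus\pfr^{m+1}\ocal)$, and --- since a bounded number of copies of $G$ sum up to all of $[0,N-1]$ --- multiplying a bounded number of such differences produces, inside $\gen{B}{C_1}$ for an absolute $C_1$, an element of $R$ of valuation exactly $m$ for every $m$. Feeding these elements and the residue-field representatives of the previous step into \cite{BG}'s assembly, and keeping their count of sum-product operations, one arrives at $\gen{B}{200}=R/\mathfrak{m}^N=R$; nothing here sees $|\f|$ or $e$.

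The step I expect to be the main obstacle is that last assembly: faithfully reproducing \cite[Lemma A.1]{BG}'s combination of the residue-field surjectivity with the valuation-$m$ elements into a surjection onto all of $R$ using only an \emph{absolute} number of sum-product operations --- the delicate part already in their $\bbz/p^N\bbz$ setting --- while also checking that their argument never uses $|\f|=p$ outside the residue-field step above, and never tacitly assumes $K$ unramified (where multiplication by $p$ shifts the filtration by $1$ rather than by $e$). A minor bookkeeping point is to verify that $C_0$ from finite-field sum-product together with the constant from the rest of \cite{BG} still fits under $200$; if not, one simply replaces $200$ by the resulting absolute constant.
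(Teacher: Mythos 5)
Your opening paragraph coincides with the paper's own (one-sentence) proof: the paper simply cites \cite[Lemma A.1]{BG} and its use in \cite[Proof of Corollary A.1]{BG}, the point being that Bourgain's argument sees only the principal-ideal filtration of the local ring $\pi_{\pfr^N}(\ocal)$ and so transfers verbatim. Your observation that the $k=1$ hypothesis gives $|\pi_\pfr(B)|>|\f|^{3/4}>|\f|^{1/2}$, ruling out concentration in a proper subfield of $\f$, is the correct extra remark when the residue field is not prime.

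The gap is in the middle. The combinatorics you supply --- the set $G$ of splitting levels, $|G|>3N/4$, the short-gap estimate, and the production inside $\gen{B}{C_1}$ (for an absolute $C_1$, via Mann) of an element of valuation exactly $m$ for every $0\le m<N$ --- is correct as far as it goes, but it is not the engine of \cite[Lemma A.1]{BG} and it does not close the argument. Having a unit multiple of $\pfr^m$ in $\gen{B}{C_1}$ for every $m$ together with $\f$-surjectivity does \emph{not} give $\gen{B}{200}=\pi_{\pfr^N}(\ocal)$: writing a generic element of the ring as $\sum_m r_m u_m$ requires $N$ summands, not an absolute number, and nothing in your sketch reduces that to a bounded count. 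The step that actually closes Bourgain's lemma is Fourier-analytic --- a flattening estimate showing that an absolutely bounded number of additive and multiplicative convolutions of $\pcal_B$ has uniformly small nontrivial Fourier coefficients, driven by the non-concentration hypothesis via multiplicative energy/exponential sums --- and your outline never engages with it. The clause ``feeding these \ldots\ into [BG]'s assembly'' is exactly where the proof lives, and you correctly flag it as ``the main obstacle''; it is a genuine missing idea, not a bookkeeping check, and the combinatorial skeleton you built does not lead to it.
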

\begin{proof}
It is a consequence of \cite[Lemma A.1]{BG} as it is observed in \cite[Proof of Corollary A.1]{BG}.
\end{proof}
Next following~\cite[Proof of Corollary A.1]{BG} we show how Lemma~\ref{l:LargeEntropy} helps us to deal with (extremely) large sets. 
\begin{lem}\label{l:LargeSets}
For any $0<\vare\ll 1$, $0<\delta\ll \vare$, and any finite extension $K$ of $\bbq_p$ the following holds:

Let $\ocal$ be the ring of integers of $K$, $\pfr$ be a uniformizing element of $K$, and $\f$ be the residue field. Suppose $A\subseteq \pi_{\pfr^N}(\ocal)$ such that $|A|\ge |\f|^{N(1-\delta)}$. Then 
\[
\pi_{\pfr^N}(\pfr^{\lceil \vare N\rceil}\ocal)\subseteq \gen{A}{200}.
\]
\end{lem}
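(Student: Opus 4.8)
The plan is to show that a set $A$ occupying almost all of $\pi_{\pfr^N}(\ocal)$ must already spread across residue levels enough to apply the entropy/bounded-generation input of Lemma~\ref{l:LargeEntropy}, after truncating to a suitable top quotient. Concretely, I would first pass to the quotient at level $N':=N-\lceil\vare N\rceil$, i.e. consider $\bar{A}:=\pi_{\pfr^{N'}}(A)\subseteq \pi_{\pfr^{N'}}(\ocal)$. Since the fibers of $\pi_{\pfr^{N'}}:\pi_{\pfr^N}(\ocal)\to\pi_{\pfr^{N'}}(\ocal)$ have size $|\f|^{\lceil\vare N\rceil}$, the hypothesis $|A|\ge|\f|^{N(1-\delta)}$ forces $|\bar A|\ge|\f|^{N(1-\delta)}/|\f|^{\lceil\vare N\rceil}=|\f|^{N(1-\delta)-\lceil\vare N\rceil}$, which is still a $(1-O(\delta/\vare))$-proportion of $\pi_{\pfr^{N'}}(\ocal)$ when $\delta\ll\vare$; in particular $|\bar A|\ge|\f|^{N'(1-\delta')}$ for some $\delta'\ll1$.

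The key step is then to verify that $\bar A$ satisfies the hypothesis of Lemma~\ref{l:LargeEntropy}: for every $1\le k\le N'$ and every $\xi\in\pi_{\pfr^k}(\ocal)$ we need $|\{x\in\bar A:\pi_{\pfr^k}(x)=\xi\}|<|\f|^{-(3/4)k}|\bar A|$. The fiber over $\xi$ has size at most $|\f|^{N'-k}$ inside $\pi_{\pfr^{N'}}(\ocal)$, so the left side is $\le|\f|^{N'-k}$, while the right side is $\ge|\f|^{-(3/4)k}|\f|^{N'(1-\delta')}=|\f|^{N'-k-\delta' N'+k/4}$. Thus the inequality holds provided $N'-k<N'-k-\delta'N'+k/4$, i.e. $k/4>\delta'N'$, which holds for all $k\ge 4\delta'N'$. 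For the range $k<4\delta'N'$ one argues separately: here the trivial bound $|\f|^{N'-k}$ on the fiber is compared against $|\f|^{-(3/4)k}|\bar A|$, and since $|\bar A|$ is close to full, a fiber of relative size $\ge|\f|^{-(3/4)k}$ would have absolute size $\ge|\f|^{N'(1-\delta')-(3/4)k}\ge|\f|^{N'-k-\delta'N'+k/4}$, but for small $k$ this can exceed $|\f|^{N'-k}$ only if... — so in fact the cleanest route is to choose $\delta$ (hence $\delta'$) small enough that $\delta'N'<1/4$, wait, that is too strong; instead note that for $k<4\delta'N'$ we may instead invoke that $A$ fills $\pi_{\pfr^{e'}}$ at low levels is \emph{not} assumed, so we must genuinely handle small $k$. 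The honest fix: since $|\bar A|\ge|\f|^{N'}(1-o(1))$, for each small $k$ every fiber over $\pi_{\pfr^k}$ has size at most $|\f|^{N'-k}$ and \emph{at least} $|\f|^{N'-k}-(\text{deficit})$, and the deficit $|\f|^{N'}-|\bar A|\le|\f|^{N'}(1-|\f|^{-\delta'N'})$ spread over $|\f|^k$ fibers keeps every fiber of relative size $\le|\f|^{-k}(1+o(1))<|\f|^{-(3/4)k}$ once $|\f|\gg1$ and $k\ge1$. So each case gives the strict inequality.

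With Lemma~\ref{l:LargeEntropy} applied to $\bar A$ we get $\gen{\bar A}{200}=\pi_{\pfr^{N'}}(\ocal)$. The plan's final step is to lift this back: $\gen{A}{200}\subseteq\pi_{\pfr^N}(\ocal)$ projects onto $\pi_{\pfr^{N'}}(\ocal)$ under $\pi_{\pfr^{N'}}$, so $\pi_{\pfr^N}(\gen{A}{200})$ contains a full set of coset representatives for $\pfr^{N'}\ocal/\pfr^N\ocal$-cosets; but we want all of $\pi_{\pfr^N}(\pfr^{\lceil\vare N\rceil}\ocal)=\pi_{\pfr^N}(\pfr^{N-N'}\ocal)=\ker(\pi_{\pfr^{N'}})$, i.e. precisely the cosets that die. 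This is the subtlety: surjectivity onto the quotient does not directly give the kernel. To close it, I would use that $0\in\gen{A}{200}$ and that $\gen{A}{200}$ is symmetric and closed under the relevant sums, so $\gen{A}{200}-\gen{A}{200}\supseteq\gen{A}{400}$ contains $x-x'$ for any two elements mapping to the same point of $\pi_{\pfr^{N'}}(\ocal)$; since $\bar A$ was already full at level $N'$, the fibers of $\gen{A}{200}$ over level $N'$ can be shown (by a second application of the fiber-size bound, or by noting the convolution structure) to cover all of $\ker(\pi_{\pfr^{N'}})$ after a bounded number of further operations, staying within $\gen{A}{200}$ at the cost of enlarging the constant from $200$ to a fixed universal value. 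The main obstacle is exactly this last passage from ``full in the quotient'' to ``contains the kernel'': it requires tracking how the regular/entropy structure at the top $N'$ levels forces density at the bottom $\lceil\vare N\rceil$ levels, and this is where one must be careful that $\delta\ll\vare$ is used (so that the ``lost'' levels are a genuinely small fraction and the fiber bound of Lemma~\ref{l:LargeEntropy} still has room to spare).
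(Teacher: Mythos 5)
Your plan takes the wrong kind of reduction: you quotient out the \emph{bottom} of the tower (the subgroup $\pfr^{N'}\ocal$ with $N'=N-\lceil\vare N\rceil$), which is precisely the direction that cannot produce the statement you want. There are two concrete failures.

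First, your verification of the hypothesis of Lemma~\ref{l:LargeEntropy} for $\bar A$ is incorrect in the small-$k$ range. You argue that because the deficit $|\f|^{N'}-|\bar A|$ is ``spread over $|\f|^k$ fibers'' each fiber has relative size $\le |\f|^{-k}(1+o(1))$; but the deficit can concentrate, leaving other fibers disproportionately heavy. The only bound you really have is $\max_\xi |\{x\in\bar A:\pi_{\pfr^k}(x)=\xi\}|/|\bar A|\le |\f|^{N'-k}/|\f|^{N'(1-\delta')}=|\f|^{\delta'N'-k}$, and this beats $|\f|^{-(3/4)k}$ only for $k>4\delta'N'$. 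For $k\le 4\delta'N'$ the hypothesis genuinely can fail: e.g.\ $A=\pi_{\pfr^N}(\pfr\ocal)$ has $|A|=|\f|^{N-1}$, so $\delta$ may be as small as $1/N$, yet the $k=1$ fiber over $0$ has relative size $1\gg|\f|^{-3/4}$. So $\bar A$ need not satisfy the hypothesis, and Lemma~\ref{l:LargeEntropy} cannot be invoked directly.

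Second, even granting $\gen{\bar A}{200}=\pi_{\pfr^{N'}}(\ocal)$, that only says $\gen{A}{200}$ surjects onto the quotient $\pi_{\pfr^{N'}}(\ocal)$; the lemma demands that $\gen{A}{200}$ \emph{contain the kernel} $\pi_{\pfr^N}(\pfr^{\lceil\vare N\rceil}\ocal)$ of the dual projection. You correctly flag this as the subtle point, but the sketch you give (difference sets, convolution structure) does not close it, and indeed surjectivity onto a quotient does not, by itself, yield a subgroup downstairs. The paper avoids both problems by a different reduction: it sets $n_0:=\max\{k:\max_\xi|\{x\in A:\pi_{\pfr^k}(x)=\xi\}|>|\f|^{-(3/4)k}|A|\}$, shows from $|A|\ge|\f|^{N(1-\delta)}$ that $n_0<4N\delta$, passes to the largest fiber $A'$ over a single point $\xi$ at level $n_0$, and rescales by $\pfr^{-n_0}$ to obtain $B\subseteq\pi_{\pfr^{N-n_0}}(\ocal)$. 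Because $n_0$ was chosen maximal, $B$ satisfies the hypothesis of Lemma~\ref{l:LargeEntropy} at \emph{every} level, giving $\gen{B}{200}=\pi_{\pfr^{N-n_0}}(\ocal)$; undoing the rescaling yields $\gen{A}{200}\supseteq\pi_{\pfr^N}(\pfr^{200 n_0}\ocal)$, and $200n_0<800 N\delta\le\lceil\vare N\rceil$ once $\delta\ll\vare$. Restricting to a fiber (rather than projecting to a quotient) is what simultaneously repairs the hypothesis at low levels and makes the lifting step go in the correct direction.
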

\begin{proof}\footnote{This is identical to \cite[Proof of Corollary A.1]{BG}. It is included for the convenience of the reader.} Let
\[
n_0:=\max\{k|\h \max_{\xi}|\{x\in A| \pi_{\pfr^k}(x)=\xi\}|>|\f|^{-(3/4)k}|A|\}.
\]
Hence for small enough $\delta$ (to be determined later) we have 
\[
|\f|^{N-n_0}>|\f|^{-(3/4)n_0}|A|\ge |\f|^{-(3/4)n_0}|\f|^{N(1-\delta)}.
\]
Therefore we have
\be\label{e:UpperboundOnBadLevel}
n_0<4N\delta.
\ee
Let $\xi\in \pi_{\pfr^{n_0}}(\ocal)$ be such that $A':=\{x\in A|\h \pi_{\pfr^{n_0}}(x)=\xi\}$ has at least $|\f|^{-(3/4)n_0}|A|$-many elements. And let
\[
B:=\pi_{\pfr^{N-n_0}}(\{x\in \ocal|\h \pi_{\pfr^N}(x_0+\pfr^{n_0} x)\in A\}),
\]
where $\pi_{\pfr^{n_0}}(x_0)=\xi$. By Lemma~\ref{l:LargeEntropy}, we have that
\[
\gen{B}{200}=\pi_{\pfr^{N-n_0}}(\ocal).
\]
Hence 
\[
\gen{A}{200}\supseteq \pi_{\pfr^N}(\pfr^{200 n_0} \ocal).
\]
Now (\ref{e:UpperboundOnBadLevel}) gives us the claim. 
\end{proof}
\begin{proof}[Proof of Proposition~\ref{p:BoundedGenerationScalarSum}]
By Lemma~\ref{l:LargeSets}, it is enough to prove that
\be\label{e:ChangeToOrderControl}
|\gen{A}{C}+\pi_{\pfr^{N}}(\omega_1)\gen{A}{C}+\cdots+\pi_{\pfr^{N}}(\omega_C)\gen{A}{C}|\ge |\f|^{N(1-O(\vare_2))},
\ee
for a positive integer $C\gg_{\vare_1} 1$ and $\omega_i\in \prod_C(\Omega\cup\{1\})$.
One can get (\ref{e:ChangeToOrderControl}) by applying Theorem~\ref{t:ScalarSumExpansion} repeatedly and using the fact that $\vare_1\ll \vare_2$.
\end{proof}
\begin{proof}[Proof of Corollary~\ref{c:ThickSegment}]
	Since $\pi_{\pfr^{e'}}(A)=\pi_{\pfr^{e'}}(\ocal)$, we have $\pi_{\pfr}(A)=\f$. Therefore there is a subset $\Omega\subseteq A$ such that $\pi_{\pfr}$ induces a bijection between $\Omega$ and $\f^{\times}$. If $K$ is ramified over $\bbq_p$, then $e'=2$. So by the assumption, we can apply Theorem~\ref{t:BoundedGenerationScalarSum} to $\Omega\subseteq A$ and $A$, which implies the claim. Now suppose $K$ is an unramified extension of $\bbq_p$, and let $s:\f\rightarrow \pi_{\pfr^2}(A)$ be a section of $\pi_{\pfr}:\pi_{\pfr^2}(A)\rightarrow \f$. Since $K$ is an unramified extension of $\bbq_p$, $\f$ cannot be embedded into $\pi_{\pfr^2}(\ocal)$ as an additive group. Hence there are $x_1,x_2 \in \f$ such that $s(x_1)+s(x_2)-s(x_1+x_2)\neq 0$. Therefore this time we can apply Theorem~\ref{t:BoundedGenerationScalarSum} to $\Omega\subseteq A$ and $\gen{A}{2}$ and get the claim. 
\end{proof}

\section{Getting a thick $\bbz_p$-segment in a sum-product of a large set.}
In this section, first we get a {\em thick $\bbz_p$-segment} in a {\em small scale} in a sum-product set where the implied constants are independent of local field $K$, but the caveat is that we assume ${\rm gr}_{1,\pfr}(\langle A\rangle;\ocal)\neq 0$. This is based on a multi-scale analog of the Bourgain-Katz-Tao argument and another application of Mann's theorem. It is worth pointing out that this result is not needed to prove Theorem~\ref{t:ThickSegment}. Next we prove Theorem~\ref{t:ThickSegment}, where we relax the condition on the set $A$, but assume that the degree of the field extension $K/\bbq_p$ is bounded. 
\subsection{Multi-scale version of the Bourgain-Katz-Tao argument.}
In this section we give a $\pfr$-adic version of \cite[Theorem 4.3]{BKT}. One surprising result is that the implied constants are independent of the choice of local field $K$.  

\begin{lem}\label{lem-multiscale-BKT}
For any positive integer $t$, positive numbers $0<\vare_1\ll\vare_2\ll_t 1$, $0<\delta\ll_{\vare_1} 1$, any positive integer $C\gg_{\vare_1} 1$, and any finite extension $K$ of $\bbq_p$ with large, depending on $\vare_1$, residue field $\f$ the following holds: 	let $\ocal$ be the ring of integers of $K$, and $\pfr$ be a uniformizing element of $K$. Suppose $A\subseteq \pi_{\pfr^N}(\ocal)$ such that
\begin{enumerate}
\item $|\pi_{\pfr^i}(A)|\ge |\f|^{i\vare_1}$ for any $N\delta\le i\le N$.
\item $0,1\in A$ and there are $a_{1}, a_{2}\in A$ such that $a_{1}-a_{2}\in \pi_{\pfr^{N}}(\pfr\ocal\setminus \pfr^{2}\ocal)$.
\end{enumerate}
Then either 
\[
\pi_{\pfr^N}(\pfr^{\lceil\vare_2 N\rceil}\ocal)\subseteq \gen{A}{C},
\]
or 
\[
\pi_{\pfr^{\lfloor t\vare' N\rfloor}}(\gen{A}{C}\cap \pi_{\pfr^N}(\pfr^{\lceil\vare'N\rceil}\ocal))
\text{ is a ring, }
\]
for some $\vare'$ in $[\vare_2^{m(\vare_1)},\vare_2]$.
\end{lem}
\begin{proof}
Let $m:=m(\vare_1)$ be a large integer (will be determined later). By Hensel's lemma, we know that there is a subgroup  $\Omega$ of $\ocal^{\times}$ such that $\pi_{\pfr}$ induces an isomorphism between $\Omega$ and $\f^{\times}$. Let $\psi_{0,N}$ be as in the paragraph of (\ref{e:Digits}). Then by Theorem~\ref{t:BoundedGenerationScalarSum}, if $0<\delta\ll_{\vare_1,m} 1$, we have that 
\be\label{e:BoundedGenerationWithScalar}
\pi_{\pfr^N}(\pfr^{\lceil\vare_2^m N\rceil}\ocal)\subseteq \gen{A}{C_1}+\psi_{0,N}(\alpha_1)\gen{A}{C_1}+\cdots+\psi_{0,N}(\alpha_k)\gen{A}{C_1},
\ee
for some integers $k:=k(\vare_1)$ and $C_1:=C_1(\vare_1,m)$, and $\alpha_i\in \f^{\times}$.  

Now we introduce a process through which the number $k$ of the involved scalars will be reduced in the expense of enlarging $C_1$ and shrinking the size of the congruence subgroup, i.e. enlarging $\vare_2^m$. Then we will analyze the case when this process halts before getting $k=0$.  

For simplicity we say ${\rm BG}(A; \vare, k, C)$ holds if for $k$ elements  $\alpha_i\in \f^{\times}$ we have 
\be\label{e:BoundedGeneration}
\pi_{\pfr^N}(\pfr^{\lceil\vare N\rceil}\ocal)\subseteq \gen{A}{C}+\psi_{0,N}(\alpha_1)\gen{A}{C}+\cdots+\psi_{0,N}(\alpha_k)\gen{A}{C}.
\ee
{\bf Claim 1.} {\em Suppose $0<\delta_0<1$ and ${\rm BG}(A; \vare, k, C)$ holds. Then we have either {\bf (reduction)}
\be\label{ce:Reduction}
{\rm BG}(A; \vare+\delta_0, k-1, 8C),
\ee
 or {\bf ($\delta_0$-injectivity)} for any $\xbf,\xbf'\in \gen{A}{2C}^{k+1}:=\gen{A}{2C}\times \cdots \times \gen{A}{2C}$ we have that 
\be\label{ce:AlmostInjection}
l(\xbf)=l(\xbf') \Rightarrow \xbf-\xbf'\in \pi_{\pfr^N}(\pfr^{\lfloor\delta_0 N\rfloor}\ocal),
\ee
where $l(x_0,\ldots,x_k):=x_0+\psi_{0,N}(\alpha_1) x_1+\cdots+\psi_{0,N}(\alpha_k) x_k$ and $\alpha_i\in \f^{\times}$ satisfy (\ref{e:BoundedGeneration}).}

\begin{proof}[Proof of Claim 1.] Suppose $\delta_0$-injectivity fails, i.e. there are $\xbf,\xbf'\in \gen{A}{2C}\times \cdots \times \gen{A}{2C}$ such that
\begin{enumerate}
\item $\xbf-\xbf'\not\in \pi_{\pfr^N}(\pfr^{\lfloor\delta_0 N\rfloor} \ocal)^{k+1}$, and
\item $l(\xbf)=l(\xbf')$.
\end{enumerate} 
Then, for some $i_0$, $(x_{i_0}-x_{i_0}')\pi_{\pfr^N}(\ocal)\supseteq \pi_{\pfr^N}(\pfr^{\lfloor \delta_0 N\rfloor} \ocal)$. Without loss of generality let us assume that it happens for $i_0=k$ (notice that, if $i_0=0$, we can multiply both sides by $\psi_{0,N}(\alpha_1^{-1})$ to make sure that one of the remaining coefficients is one). Hence we have
\begin{align}
\notag
\pi_{\pfr^N}(\pfr^{\lceil \vare N\rceil+\lfloor \delta_0 N\rfloor} \ocal)\subseteq& 
\psi_{0,N}(\alpha_0) \gen{A}{2C} (x_k-x_k')+\cdots+\psi_{0,N}(\alpha_k) \gen{A}{2C} (x_k-x_k')
\\
\notag
\text{(since $l(\xbf)=l(\xbf')$,) \h\h}\subseteq& \psi_{0,N}(\alpha_0) \gen{A}{4C}+\cdots+\psi_{0,N}(\alpha_{k-1}) \gen{A}{4C}-\left(\sum_{i=0}^{k-1} \psi_{0,N}(\alpha_i)(x_i-x_i')\right)\gen{A}{2C} 
\\
\label{e:Reduction}
\subseteq& \psi_{0,N}(\alpha_0) \gen{A}{8C}+\cdots+\psi_{0,N}(\alpha_{k-1}) \gen{A}{8C},
\end{align}
which means that ${\rm BG}(A; \vare+\delta_0,k-1,8C)$ holds.
\end{proof}

{\bf Claim 2.} {\em Suppose ${\rm BG}(A;\vare,k,C)$ holds and $\alpha_i\in \f^{\times}$ satisfy (\ref{e:BoundedGeneration}). If $l(\xbf):=\sum_i \alpha_i x_i$ is $\delta_0$-injective on $\gen{A}{2C}^{k+1}$ for some $\delta_0> \vare$ (see (\ref{ce:AlmostInjection})), then 
\[
\pi_{\pfr^{\lfloor\delta_0 N\rfloor}}\left(\gen{A}{C}\cap \pi_{\pfr^N}(\pfr^{\lceil\vare N\rceil}\ocal)\right)
\]
is closed under addition and multiplication.}

\begin{proof}[Proof of Claim 2.] Let $x,x'\in \gen{A}{C}\cap \pi_{\pfr^N}(\pfr^{\lceil \vare N \rceil}\ocal)$. So there is $\xbf_1\in \gen{A}{C}\times \cdots \times \gen{A}{C}$ such that $l(\xbf_1)=x+x'=l(x+x',0,\cdots,0)$.

By assumption for any $\xbf_1,\xbf'_1\in \gen{A}{2C}\times \cdots \times \gen{A}{2C}$ we have that 
\be\label{e:AlmostInjective}
l(\xbf_1)=l(\xbf'_1) \Rightarrow \xbf_1-\xbf'_1\in \pi_{\pfr^N}(\pfr^{\lfloor \delta_0 N \rfloor}\ocal)^{k+1}.
\ee
Hence we have
\[
\xbf_1\equiv (x+ x',0,\ldots,0) \pmod{\pfr^{\lfloor \delta_0 N \rfloor}},
\]
which implies that $\pi_{\pfr^{\lfloor \delta_0 N \rfloor}}(A\cap \pi_{\pfr^N}(\pfr^{\lceil \vare N \rceil}\ocal))$ is closed under addition.

Similarly there is $\xbf_2\in \gen{A}{C}\times \cdots \times \gen{A}{C}$ such that $l(\xbf_2)=xx'=l(xx',0,\cdots,0)$. As $xx'\in \gen{A}{2C}$, again as above $\delta_0$-injectivity on $\gen{A}{2C}^{k+1}$ implies that $\xbf_2\equiv (xx',0,\ldots,0) \pmod{\pfr^{\lfloor \delta_0 N \rfloor}}$, which implies that $\pi_{\pfr^{\lfloor \delta_0 N \rfloor}}(A\cap \pi_{\pfr^N}(\pfr^{\lceil \vare N \rceil}\ocal))$ is closed under multiplication.
\end{proof}

Having the above Claims, we inductively define three sequences $\{\vare_i'\}, \{k_i'\}, \{C_i'\}$ of numbers:
\begin{align*}
\vare_0':= \vare_2^m,&\hspace{1cm} \vare_{i+1}':= (t+1)\vare_i';\\
k_0':=k,& \hspace{1cm} k_{i+1}':=k_i-1;\\
C_0':=C_1,& \hspace{1cm} C_{i+1}':=8C_i'.
\end{align*}
First notice that for $\vare_2\ll_t 1$ and $k(\vare_1)\le m(\vare_1)$ we have that 
\be\label{e:UpperboundConstants}
\vare_i'\le (t+1)^k \vare_2^m\le (t+1)^k (t+1)^{-m} \vare_2\le \vare_2,
\hspace{1cm}
C_i' \le 8^k C_1 \ll_{\vare_1} 1.
\ee
We know that ${\rm BG}(A; \vare_0',k_0', C_0')$ holds. Suppose $i_0$ is the smallest non-negative integer such that  
\[
{\rm BG}(A; \vare_{i_0+1}', k_{i_0+1}',C_{i_0+1}')
\]
 does not hold. If $i_0=k$, then ${\rm BG}(A; \vare_2, 0, 4^k C_1)$ holds. And we are done. Suppose $i_0<k$. So, by Claim 1, ${\rm BG}(A; \vare_{i_0}',k_{i_0}', C_{i_0}')$ holds for some $\alpha_j\in \f^{\times}$ and $l(\xbf):=\sum_j \alpha_i x_j$ is $t\vare_{i_0}'$-injective on $\gen{A}{2C_{i_0}'}^{k_{i_0}'+1}$.  Therefore, by Claim 2, 
\[
\pi_{\pfr^{\lfloor t\vare_{i_0}' N\rfloor}}\left(\gen{A}{C_{i_0}'}\cap \pi_{\pfr^N}(\pfr^{\lceil\vare_{i_0}' N\rceil}\ocal)\right)
\]
is closed under addition and multiplication; and the claim follows.
\end{proof}
In order to get a meaningful conclusion from Lemma~\ref{lem-multiscale-BKT}, we have to show that $\gen{A}{C}$ has an element with $\pfr$-valuation roughly equal to $\lceil \vare'N\rceil$ as otherwise $\pi_{\pfr^{\lfloor t\vare' N\rfloor}}(\gen{A}{C}\cap \pi_{\pfr^N}(\pfr^{\lceil\vare'N\rceil}\ocal))
$ can be a very small set. For that purpose, next we will observe that Lemma~\ref{l:LargeInterval} gives us such a control.

\begin{proof}[Proof of Theorem~\ref{thm-multiscaleBKT}]
Suppose the implied constants are so that the given inequalities in Lemma~\ref{lem-multiscale-BKT} are satisfied for the parameters $\vare_1,\vare_2,t, \delta_0,$ and $C_0$. By changing the implied constants, we can further assume that $\delta_0\le \vare_1^{m(\vare_1)+1}\le \vare_1\vare_2^{m(\vare_1)}$ and $C_0\ge 6/\vare_1$. Suppose $\delta\le \vare_1\delta_0/4$ and $C\gg C_0/(\vare_1^2\delta_0)$. Notice that for some $C'\le 4/(\vare_1\delta_0)$ we have that $|\gen{A}{C'}+\gen{A}{C'}|\le |\gen{A}{C'}||\f|^{N\vare_1\delta_0/4}$. Hence by Lemma~\ref{l:RegularSubset} and a similar argument as in the proof of Theorem~\ref{t:BoundedGenerationScalarSum}, there is an $(m_0,\ldots,m_{N-1})$-regular subset $A'$ of $\gen{A}{3C'}$ such that 
\begin{itemize}
\item $|\pi_{\pfr^i}(A')|\ge |\f|^{i\vare_1/2}$ for any $N\delta_0 \le i\le N$, and
\item $m_0,m_1>1$
\end{itemize}
 By Lemma~\ref{l:LargeInterval} for the numbers $x_i:=\log m_i/\log |\f|$ we have 
 \be\label{eq-large-image-valutaion}
 (N\lceil 2/\vare_1 \rceil \delta_0,N)\cap \bbz \subseteq \textstyle \sum_{6\lceil 1/\vare_1\rceil}v_{\pfr}(\gen{A}{6C'})\subseteq v_{\pfr}(\gen{A}{36C'\lceil 1/\vare_1\rceil});
 \ee
 and the claim follows.
\end{proof}

\subsection{Proof of Theorem~\ref{t:ThickSegment}}
Let us start with (a variation of) \cite[Theorem~4]{BKT}. We include the proof for the convenience of the reader. 
\begin{lem}\label{l:BoundedGenerationFiniteField}
For any $0<\vare\ll 1$, positive integer $C\gg_{\vare} 1$, and a finite field $\f$ the following holds: 

Suppose $B\subseteq \f$, $|B|\ge |\f|^{\vare}$, and $0,1\in B$. Then $\gen{B}{C}$ is a subfield of $\f$. 
\end{lem}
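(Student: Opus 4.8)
The statement is essentially the Bourgain–Katz–Tao sum-product theorem over a finite field in the form: a set $B\subseteq\f$ with $0,1\in B$ and $|B|\ge|\f|^\vare$ generates, under $C=C(\vare)$ many sums and products, a subfield. First I would reduce to the central dichotomy of the sum-product theorem: either $|B+B|$ or $|B\cdot B|$ is at least $|B|^{1+c}$ for a constant $c=c(\vare)>0$, \emph{unless} $B$ is already essentially all of a subfield. More precisely, the BKT argument shows that for any $\delta>0$ there is $\vare_0(\delta)>0$ such that if $|B|\le|\f|^{1-\delta}$ and $|B|\ge|\f|^\vare$ with $\vare<\vare_0$ having no nontrivial proper subfield of size close to $|B|$, then $\max(|B+B|,|BB|)\gg |B|^{1+c(\delta)}$. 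I would quote this (it is ``\cite[Theorem 4]{BKT}'' up to the variation being advertised) and then run the iteration.

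**Key steps in order.** (1) \emph{Growth iteration.} Starting from $B_0=B$, set $B_{j+1}:=\gen{B_j}{O(1)}$ (a bounded number of sums and products of $B_j$, enough so that $B_{j+1}\supseteq (B_j+B_j)\cup(B_j\cdot B_j)$ and $B_{j+1}$ still contains $0,1$ and is symmetric). By the sum-product estimate, as long as $|B_j|\le|\f|^{1-\delta}$ we gain $|B_{j+1}|\ge|B_j|^{1+c}$, so after $O_{\vare,\delta}(1)$ steps we reach a set $B'$ with $|B'|\ge|\f|^{1-\delta}$; fix $\delta$ small (say $\delta=1/4$). (2) \emph{From density $1-\delta$ to all of $\f$.} Here I would invoke a standard fact (e.g. an application of the same BKT toolbox, or a direct argument via character sums / the Plünnecke–Ruzsa inequality: a set of density $>|\f|^{-\delta}$ satisfies $B'+B'\cdot B'=\f$ after boundedly many operations). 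Concretely, once $|B'|>\sqrt{|\f|}$ one can show $\gen{B'}{O(1)}=\f$, finishing the proof with $\f$ itself being the subfield. (3) \emph{The subfield case.} If at some stage the growth \emph{stalls} — i.e. $|B_{j+1}|\le |B_j|^{1+c/2}$ while $|B_j|\le|\f|^{1-\delta}$ — then the sum-product dichotomy forces $B_j$ to be trapped near a proper subfield $\f_0\subsetneq\f$: one gets $|B_j+B_j|,|B_j\cdot B_j|\ll|B_j|^{1+o(1)}$, and the Balog–Szemerédi–Gowers / Freiman-type structural input (again in \cite{BKT}) yields a subfield $\f_0$ with $|\f_0|\ll |B_j|^{O(1)}$ and a large intersection $B_j\cap(\xi\f_0)$. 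Since $0,1\in\gen{B}{C}$ and the constructed set is symmetric, translating/rescaling and taking a few more operations produces all of $\f_0$ inside $\gen{B}{C}$; and $|\f_0|\ge|B|\ge|\f|^\vare$, so it is a genuine (possibly improper) subfield of size at least $|\f|^\vare$. In every branch, $\gen{B}{C}$ contains a subfield and is itself a subfield because a set containing $0,1$, closed enough under $+$ and $\times$ after $C$ operations (and symmetric under negation) that coincides with the subfield generated is exactly that subfield; making ``$C$ operations suffice'' precise is just tracking that each of the $O_\vare(1)$ iteration steps costs a bounded multiplicative factor in the number of operations.

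**Main obstacle.** The genuine difficulty is step (3): extracting an actual \emph{subfield} (as opposed to merely an approximate one) inside $\gen{B}{C}$ with only a \emph{bounded} number of arithmetic operations allowed. The BKT structural theorem gives $B_j$ correlating with a coset of a subfield $\f_0$, but turning ``$B_j$ has $\gg|B_j|^{1-o(1)}$ elements in $\xi\f_0$'' into ``$\gen{B}{C}\supseteq\f_0$'' requires: first isolating $\xi^{-1}B_j\cap\f_0$ (multiply by an element of $B_j$ — this uses $0,1\in B$ to keep control), then applying the sum-product iteration \emph{inside} $\f_0$ to blow this dense subset up to all of $\f_0$, and checking uniformity of the constant $C$ in $|\f|$. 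Keeping the number of operations $C$ depending only on $\vare$ and not on $\f$ is the bookkeeping heart of the argument; conceptually it works because each invocation of the sum-product estimate has quantitative bounds independent of the field, and the number of invocations is $O_\vare(1)$. I expect the author's proof to carry this out by quoting \cite[Theorem 4]{BKT} in a form that already packages the subfield extraction, then doing the short iteration above.
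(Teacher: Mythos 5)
Your proposal is a genuinely different route from the paper's, and it has a real gap at the crucial step. The paper does not run a sum-product expansion iteration at all. It instead invokes the covering lemma \cite[Lemma~4.1]{BKT}, which supplies $\alpha_1,\dots,\alpha_k\in\f^\times$ with $k\ll_\vare 1$ and $\f=\alpha_1 B+\cdots+\alpha_k B$, and then runs a purely algebraic reduction-versus-injectivity dichotomy: at each stage one either drops one scalar $\alpha_i$ at the cost of replacing the current set $X$ by $\gen{X}{2}$ (reduction), or the linear form $\xbf\mapsto\sum_i\alpha_i x_i$ is injective on $\gen{X}{2}^{k'}$ (injectivity). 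In the injective case $X$ is immediately a subfield: write $\alpha_1(y+y')$ and $\alpha_1 yy'$ in covering form, compare with the trivial representations $(y+y',0,\dots,0)$ and $(yy',0,\dots,0)$ in $\gen{X}{2}^{k'}$, and read off $y+y'\in X$ and $yy'\in X$. Since the scalar count $k$ drops by one at each reduction, the process halts after at most $k$ steps, and some $\gen{B}{4^{i_0}}$ with $i_0\le k$ is a subfield. No BSG, no Freiman, no growth estimates enter.

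The gap in your sketch is in the stall case of step (3). The structural input you quote gives only that $B_j$ has large intersection with a coset $\xi\f_0$ of a subfield, whereas the statement asserts that $\gen{B}{C}$ \emph{equals} a subfield. You address (loosely) the inclusion $\f_0\subseteq\gen{B}{C}$ but never the reverse $\gen{B}{C}\subseteq\f_0$. Nothing in your argument rules out $B$ having elements outside $\f_0$ while the iteration still stalls, because $B_j$ is only \emph{approximately} trapped in $\xi\f_0$; in that scenario $\gen{B}{C}$ sits strictly between $\f_0$ and $\f$ and is closed under neither operation. Your closing assertion that a set which ``coincides with the subfield generated'' is a subfield is precisely what needs to be proved, and the approximate structure theorem does not hand it to you. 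The covering-plus-injectivity device is exactly what yields an actual, not approximate, subfield in boundedly many steps without this circularity; I would recommend adopting that route.
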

\begin{proof}
By \cite[Lemma 4.1]{BKT}, there are $\alpha_1,\ldots,\alpha_k\in \f^{\times}$ such that $k\ll_{\vare} 1$ and 
\[
\alpha_1 B+\cdots+\alpha_k B=\f.
\]
{\bf Claim 1:} {\em Suppose $0,1\in X\subseteq \f$ and $\alpha_i\in \f^{\times}$ such that 
\be\label{ce:BoundedGenerationFiniteField}
\f=\alpha_1 X+\cdots+\alpha_k X.
\ee
Then either we have {\bf (reduction)}
$
\f=\sum_{i\neq i_0} \alpha_i \gen{X}{2},
$
for some $i_0$, or {\bf (injectivity)} for any $\xbf,\xbf'\in X^k$
$
\sum_i \alpha_i x_i=\sum_i \alpha_i x_i' \Rightarrow \xbf=\xbf'.
$
} 
\begin{proof}[Proof of Claim] Suppose that the injectivity does not hold, i.e. there are $\xbf\neq\xbf'\in X^k$ such that 
\be\label{e:LinearCombination}
\sum_i \alpha_i x_i=\sum_i \alpha_i x_i'.
\ee   
Without loss of generality we can assume that $x_k\neq x_k'$. Thus
\begin{align*}
\f&= \alpha_1 (x_k-x_k')X+\cdots+\alpha_k (x_k-x_k')X\\
\text{(by (\ref{e:LinearCombination}))}\hspace{.5cm}
&\subseteq \alpha_1 (X\cdot X-X\cdot X)+\cdots+\alpha_{k-1} (X\cdot X-X\cdot X)+\left(\sum_{i=1}^{k-1}\alpha_i(x_i-x_i')\right)X\\
&\subseteq \alpha_1 \gen{X}{2}+\cdots+\alpha_{k-1} \gen{X}{2}.
\end{align*}
\end{proof} 

{\bf Claim 2:} {\em Suppose $0,1\in X\subseteq \f$ and $\alpha_i\in \f^{\times}$ such that 
\[
\f=\alpha_1 X+\cdots+\alpha_k X.
\]
Suppose for any $\xbf,\xbf'\in \gen{X}{2}^k$ we have
\[
\sum_i \alpha_i x_i=\sum_i \alpha_i x_i'\Rightarrow \xbf=\xbf'.
\]
Then $X$ is a subfield of $\f$.
}
\begin{proof}[Proof of Claim] It is enough to show $X\cdot X=X$ and $X+X=X$. For any $y,y'\in X$, there is $\xbf\in X^k$ such that 
\[
\alpha_1(y+y')=\sum_i \alpha_i x_i.
\]
Hence $y+y'=x_1\in X$. And so $X$ is closed under addition. Similarly it is closed under multiplication. 
\end{proof}
Now suppose $i_0\le k$ be the largest non-negative integer such that 
\[
\f=\alpha_1'\gen{B}{4^{i_0}}+\cdots+\alpha_{k-i_0}'\gen{B}{4^{i_0}},
\]
for some $\alpha_i'\in \f^{\times}$. If $i_0=k$, we are done. If not, then by Claim 1 for $X=\gen{\gen{B}{4^{i_0}}}{2}$ we have that for any 
$\xbf,\xbf'\in \gen{\gen{B}{4^{i_0}}}{2}^{k-i_0}$ we have
\[
\sum_i \alpha_i' x_i=\sum_i \alpha_i' x_i' \Rightarrow \xbf=\xbf'.
\]
Hence, by Claim 2, $\gen{B}{4^{i_0}}$ is a subfield of $\f$. 
\end{proof}

\begin{prop}\label{prop-bounded-generation-for-equal-grades}
	For any positive integers $0<\vare_1\ll \vare_2\le 1/2$, $0<\delta\ll_{\vare_1} 1$, positive integers  $d$ and $1\ll_{\vare_1,d} C$, the following holds: suppose $K$ is a field extension of $\bbq_p$ and $[K:\bbq_p]\le d$. Let $\ocal$ be the ring of integers of $K$, $\pfr$ be a uniformizing element of $K$, and $\f$ be the residue field of $K$. Suppose $|\f|\gg_{\vare_1,d} 1$. Suppose $A$ is a subset of $\ocal$ which contains $0$ and $1$. Let $R$ be the closure of the subring of $\ocal$ that is generated by $A$. Suppose 
	\begin{itemize}
	\item[(C1)] {\em (Equality of grades)} for any integer $i$ in $[0,N-1]$, $|{\rm gr}_{i,p}(R;\ocal)|=|{\rm gr}_{0,p}(R;\ocal)|$, where ${\rm gr}_{i,p}(R;\ocal):=\pi_{p^{i+1}}(R\cap p^i\ocal)$.
	\item[(C2)] {\em (Bound for the box dimension)} for any integer $i$ in $[Ne\delta,Ne]$, $|\pi_{\pfr^i}(A)|\ge |\f|^{i\vare_1}$, where $e$ is the ramification index of $K$ over $\bbq_p$. 
	\item[(C3)] {\em (Bound for level $\pfr$)} $|\pi_{\pfr}(A)|\ge |\f|^{\vare_1}$.	
	\end{itemize}
 Then $\pi_{p^N}(\gen{A}{C})\supseteq p^{\lceil N\vare_2\rceil} \pi_{p^N}(R)$.
	\end{prop}
\begin{proof}
{\bf Step 1.} (Describing $R$ for large $N$) By Theorem~\ref{t:SubringOfO}, there is a positive integer $N_0$ depending only on $[K:\bbq_p]$ such that, if $N\ge N_0$, then there are positive integers $	a$ and $b$, and a subfield $K_0$ of $K$ such that $
b-a\ge 6[K:\bbq_p]$, $N\ge b,$
\be\label{eq-passing-to-subfield}
\pi_{p^b}(R)\subseteq \pi_{p^b}(\ocal_0) \text{, and } \pi_{p^b}(R\cap p^a\ocal)=\pi_{p^b}(\ocal_0\cap p^a \ocal).
\ee

By Corollary~\ref{cor-grade-shift} we have that $x+p\ocal\mapsto p^{i}x+p^{i+1}\ocal$ is an injection from ${\rm gr}_{0,p}(R;\ocal)$ to ${\rm gr}_{i;p}(R;\ocal)$. Since by our assumption $|{\rm gr}_{i;p}(R;\ocal)|=|\pi_p(R)|$ for any integer $i$ in $[0,N-1]$, we deduce that $x+p\ocal\mapsto p^{i}x+p^{i+1}\ocal$ is a bijection from ${\rm gr}_{0,p}(R;\ocal)$ to ${\rm gr}_{i,p}(R;\ocal)$. By the equality of grades (condition C1) and \eqref{eq-passing-to-subfield} we deduce that 
\be\label{eq-verifying-the-condtions-grade-equality}
|{\rm gr}_{i,p}(R;\ocal)|=|{\rm gr}_{a,p}(R;\ocal)|=|{\rm gr}_{a,p}(\ocal_0;\ocal)|=|{\rm gr}_{0,p}(\ocal_0;\ocal)|
\ee
for any integer $i$ in $[0,N-1]$. In particular, $|\pi_{p^{j}}(R\cap p^i\ocal)|=|\pi_{p^{j-i}}(\ocal_0)|$ for any integers $i<j$ in $[0,N-1]$. By Corollary~\ref{cor-grade-shift} we have that $x+p^{b-a}\ocal\mapsto p^{a}x+p^{b}\ocal$ is an injection from $\pi_{p^{b-a}}(R)$ to $\pi_{p^b}(R\cap p^a\ocal)$; and as these sets have equal cardinality, we deduce that this map is a bijection. The same can be said for the ring $\ocal_0$ instead of $R$. Therefore by \eqref{eq-passing-to-subfield} and $b-a\ge 6[K:\bbq_p]$, we have 
\be\label{eq-verifying-the-conditions-initial-approximation}
\pi_{p^{6[K:\bbq_p]}}(R)=\pi_{p^{6[K:\bbq_p]}}(\ocal_0).
\ee
By \eqref{eq-verifying-the-condtions-grade-equality}, \eqref{eq-verifying-the-conditions-initial-approximation}, and Proposition~\ref{prop:detecting-ring-of-integers-via-grading-structure}, we have 
\be\label{eq-R-is-ring-of-integers}
\pi_{p^{N-4}}(R)=\pi_{p^{N-4}}(\ocal_0).
\ee

{\bf Step 2.} (Bounded generation of $\pi_{\pfr^{O(1)}}(R)$) By condition (C3), we have $|\pi_{\pfr}(A)|\ge |\f|^{\vare_1}$, and we also have that $0,1\in \pi_{\pfr}(A)$; hence by Lemma~\ref{l:BoundedGenerationFiniteField} we get that 
\be\label{eq-bounded-generation-mod-uniformizer}
\pi_{\pfr}(\gen{A}{C_1})=\pi_{\pfr}(R) \text{ and } [\f:\pi_{\pfr}(R)]\le 1/\vare_1
\ee 
where $C_1$ is an integer that only depends on $\vare_1$. By induction on $i$, we show that 
\be\label{eq-bounded-generation-level-i}
\pi_{\pfr^i}(\gen{A}{C_i})=\pi_{\pfr^i}(R)
\ee
where $C_i$ is an integer that depends only on $i$ and $\vare_1$. If $\pi_{\pfr^{i+1}}(\gen{A}{C_i})$ is a ring, then $\pi_{\pfr^{i+1}}(\gen{A}{C_i})=\pi_{\pfr^{i+1}}(R)$; and we can set $C_{i+1}:=C_i$. If not, 
$
\pi_{\pfr^{i+1}}(\gen{A}{3C_i}\cap \pfr^{i}\ocal)\neq 0.
$
By \eqref{eq-bounded-generation-mod-uniformizer}, $\pi_{\pfr^{i+1}}(\pfr^i \ocal)$ is an $\pi_{\pfr}(R)$-vector space of dimension at most $1/\vare_1$; and so we can deduce that 
	the $\pi_{\pfr}(R)$-subspace spanned by $\pi_{\pfr^{i+1}}(\gen{A}{3C_i}\cap \pfr^{i}\ocal)$ is contained in $\pi_{\pfr^{i+1}}(\gen{A}{6\lceil 1/\vare_1\rceil C_i}\cap \pfr^{i}\ocal)$. Since length of any chain of $\pi_{\pfr}(R)$-subspaces of $\pi_{\pfr^{i+1}}(\pfr^i\ocal)$ is at most  $1/\vare_1$, we deduce that $\pi_{\pfr^{i+1}}(\gen{A}{C_{i+1}})=\pi_{\pfr^{i+1}}(R)$ where $C_{i+1}:=(6\lceil 1/\vare_1\rceil)^{\lceil 1/\vare_1\rceil} C_i$. 
	 
{\bf Step 3.} (Finishing proof for large $N$) Suppose $N\ge N_0$ where $N_0$ is given in Step 1. Then by condition (C2), \eqref{eq-R-is-ring-of-integers} (Step 1), \eqref{eq-bounded-generation-level-i} (Step 2), and Corollary~\ref{c:BoundedGenerationFullModP}, we have  
\be\label{eq-N-4}
\pi_{p^{N-4}}(p^{\lceil\vare_2 (N-4)\rceil}R)=\pi_{p^{N-4}}(p^{\lceil\vare_2 (N-4)\rceil}\ocal_0)\subseteq \pi_{p^{N-4}}(\gen{A}{C})
\ee
where $C$ is any integer that is larger than a function of $\vare_1$. Next in \eqref{eq-N-4}, we need to change the level from $p^{N-4}$ to $p^N$. Suppose $s:\pi_{p^{N-4}}(p^{\lceil\vare_2 (N-4)\rceil}R)\rightarrow \gen{A}{C}$ is
a section of $\pi_{p^{N-4}}$; that means 
\be\label{eq-section}
\text{for any $x\in \pi_{p^{N-4}}(p^{\lceil\vare_2 (N-4)\rceil}R)$ we have $\pi_{p^{N-4}}(s(x))=x$.}
\ee
 Let $X$ be the image of $s$. Suppose $N> 6$; then $\lceil\vare_2 (N-4)\rceil<(N-4)-1$. Since $R$ is a finite rank free $\bbz_p$-submodule of $\ocal$ and $\lceil\vare_2 (N-4)\rceil<(N-4)-1$,  \eqref{eq-section} implies that the $\bbz$-span of $X$ is dense in $p^{\lceil\vare_2 (N-4)\rceil}R$. Hence the group generated by $\pi_{p^{N-4}\pfr}(X)$ is $\pi_{p^{N-4}\pfr}(p^{\lceil\vare_2 (N-4)\rceil}R)$. On the other hand, by \eqref{eq-section}, we have that 
 \[
 v_{\pfr}(s(x_1+x_2)-s(x_1)-s(x_2))\ge (N-4)e
 \]
 for any $x_1,x_2\in \pi_{p^{N-4}}(p^{\lceil\vare_2 (N-4)\rceil}R)$. 
 
 If $\pi_{p^{N-4}\pfr}\circ s$ is a group homomorphism, then $\pi_{p^{N-4}\pfr}(X)=\pi_{p^{N-4}\pfr}(p^{\lceil\vare_2 (N-4)\rceil}R)$; in particular, we have 
 \begin{align*}
 |\pi_{p^{N-4}}(p^{\lceil\vare_2 (N-4)\rceil}R)|&=|\pi_{p^{N-4}}(X)| =|X|
 \\
& \ge |\pi_{p^{N-4}\pfr}(X)| =|\pi_{p^{N-4}\pfr}(p^{\lceil\vare_2 (N-4)\rceil}R)|
\\
& =|\pi_{p^{N-4}}(p^{\lceil\vare_2 (N-4)\rceil}R)||\pi_{p^{N-4}\pfr}(p^{\lceil\vare_2 (N-4)\rceil}R\cap p^{N-4}\ocal)|
\\
&\ge |\pi_{p^{N-4}}(p^{\lceil\vare_2 (N-4)\rceil}R)||\pi_{\pfr}(R)|,
 \end{align*}
 which is a contradiction. Therefore $\pi_{p^{N-4}\pfr}\circ s$ is not a group homomorphism; and so there is $x'\in X-X-X$ such that $v_{\pfr}(x')=(N-4)e$.
Since $v_{\pfr}(p^{-(N-4)}x')=0$, we have that $|\pi_p(R)\pi_p(p^{-(N-4)}x')|=|\pi_p(R)|$; and so by \eqref{eq-bounded-generation-mod-uniformizer} and Lemma~\ref{lem-attached-graded-rings}, we have ${\rm gr}_{N-4,p}(R;\ocal)\subseteq \pi_{p^{N-3}}(\gen{A}{3C+C_1})$. Hence by \eqref{eq-N-4} we deduce that 
\[
\pi_{p^{N-3}}(p^{\lceil\vare_2 (N-4)\rceil}R)\subseteq \pi_{p^{N-3}}(\gen{A}{4C+C_1}).
\]  
Repeating this argument 3 more times, we get that 
\be\label{eq-N}
\pi_{p^{N}}(p^{\lceil\vare_2 (N-4)\rceil}R)\subseteq \pi_{p^{N}}(\gen{A}{C'}),
\ee
where $C'$ is any positive integer that is larger than a function of $\vare_1$.

{\bf Step 4.} (Finishing proof for small $N$) If $N<\max(N_0,7)$ where $N_0$ is given in Step 1, then by Step 2 we have $\pi_{p^N}(\gen{A}{C})=\pi_{p^N}(\ocal_0)$ for any integer $C$ that is larger than a function of $N_0$
\end{proof}
Proof of the next Lemma is an adaptation of the argument given in \cite[Section A.3]{BG}.

\begin{lem}[Bourgain]\label{lem-bounded-generation-large-mod-all-powers-till-N}
	For any positive integers $0<\vare\ll 1$, positive integers  $d$,  $1\ll_{d,\vare} C$, and $1\ll_{d,\vare} N$, the following holds: suppose $K$ is a field extension of $\bbq_p$ and $[K:\bbq_p]\le d$. Let $\ocal$ be the ring of integers of $K$, $\pfr$ be a uniformizing element of $K$, and $\f$ be the residue field of $K$.
	Suppose $|\f|\gg_{\vare,d} 1$. Suppose $A$ is a subset of $\ocal$ which contains $0$ and $1$. Let $R$ be the closure of the subring of $\ocal$ that is generated by $A$. Suppose, for any integer $i$ in $[1,Ne]$, $|\pi_{\pfr^i}(A)|\ge |\f|^{i\vare}$, where $e$ is the ramification index of $K$ over $\bbq_p$.  Then there are positive integers $m$ and $n$, $a\in \ocal$, and a closed subfield $K_0$ of $K$ with ring of integers $\ocal_0$ such that 
\begin{align}
\notag
n \ll_{d,\vare} N
&
\ll_{d,\vare} n-m,
& \text{ (Exponent conditions) }
\\
\notag
\pi_{\pfr^n}(\ocal_0 a)
&
\subseteq \pi_{\pfr^n}(\gen{A}{C}), 
\hspace{1cm}
v_{\pfr}(a)=m,
&
\text{ (Bounded generation) }
\\
\notag
|\pi_p(\ocal_0)|&\ge |\pi_p(R)|.
&
\text{ (Box dimension control) }
\end{align}
	\end{lem}
\begin{proof} The key point is that we can detect in a {\em bounded} number of steps whether the grades of the ring generated by $A$ are getting larger. 

Let $\vare_1:=\vare$ and assume $0<(\vare_1/2^{d+1})\ll\vare_2\le 1/4$ satisfy the inequality given in Proposition~\ref{prop-bounded-generation-for-equal-grades}.

{\bf Step 0.} (Setup) We will recursively define a sequence of quadruples $(A_i, R_i, n_i, C_i)$ of subsets $A_i$ of $\ocal$, subrings $R_i$ of $\ocal$, and positive integers $n_i$ and $C_i$ with the following properties:
\begin{itemize}
\item[(P1)] (Ring conditions) $R_i$ is the closure of the ring generated by $A_i$; and 
\[
|{\rm gr}_{0,p}(R_i;\ocal)|=|{\rm gr}_{1,p}(R_i;\ocal)|=\cdots=|{\rm gr}_{n_i-1,p}(R_i;\ocal)|.
\]
\item[(P2)] (Set conditions) 
		\begin{itemize}
			\item[(P2-a)] $0,1\in A_i$.
			\item[(P2-b)] Let $\delta_0$ be a small enough positive number (depending on $\vare_1$ and $d$) so that Proposition~\ref{prop-bounded-generation-for-equal-grades} holds for the parameter $\vare_1/2^d$ instead of $\vare_1$; moreover we assume that $\delta_0\le \vare_1/2^{d+1}$. For any integer $j$ in $[Ne\delta_0^{2},Ne]$, $|\pi_{\pfr^j}(A_i)|\ge |\f|^{\vare_1/2^i}$.
			\item[(P2-c)] $\pi_{\pfr}(A_i)=\pi_{\pfr}(R_i)$.
		\end{itemize}  
\item[(P3)] (Bounded generation) $\pi_{p^{n_i}}(\gen{A_i}{C_i})\supseteq \pi_{p^{n_i}}(p^{\lceil n_i \vare_2 \rceil}R_i)$ and $C_i\ll_{d,\vare_1} 1$.	
\item[(P4)] (Connection between sets) 
	\begin{itemize}
		\item[(P4-a)] $\alpha_i(\gen{A_i}{2}\cap \beta_i \ocal)\subseteq \alpha_i\beta_iA_{i+1}\subseteq \gen{A_i}{C_i}$ for some $\alpha_i,\beta_i\in \ocal$ such that $v_{\pfr}(\alpha_i),v_{\pfr}(\beta_i)\ll_{d,\vare} n_i$ and $\pi_p(p^{-v_{\pfr}(\alpha_i\beta_i)/e}\alpha_i\beta_i)=1$.
		\item[(P4-b)] $|\pi_p(R_i)|<|\pi_p(R_{i+1})|$.
	\end{itemize}
\end{itemize}
And we stop when $n_i\ge N\delta_0^2/2$.  

{\bf Step 1.} Since $|\pi_{\pfr}(A)|\ge |\f|^{\vare_1}$, by Lemma~\ref{l:BoundedGenerationFiniteField} there is a positive integer $C_0'$ that is at most a function of $\vare_1$ such that 
\be\label{eq-level-1-ring-generation}
\text{$\pi_{\pfr}(\gen{A}{C_0'})$ is a subfield of $\f$.} 
\ee
Let $A_0:=\gen{A}{C_0'}$; we notice that $A_0$ satisfies (P2) because of the assumption and \eqref{eq-level-1-ring-generation}. 

{\bf Step 2.} Suppose we have already defined $A_i$ that satisfies Property (P2). At this step, we more or less get the Property (P3) and give an indication on what $C_i$ can be. Furthermore we introduce an auxiliary set $A_{i+1}'$ and an auxiliary ring $R_{i+1}'$. This pair $(A_{i+1}',R_{i+1}')$ will help us to enlarge $\pi_p(R_i)$ in a bounded number of steps. 

 Let $R_i$ be the closure of the subring generated by $A_i$; and let $n_i$ be the largest positive integer in $[0,N]$ such that the first $n_i$ grades ${\rm gr}_{j,p}(R_i;\ocal)$ of $R_i$ have equal number of elements; that means (P1) holds. By Proposition~\ref{prop-bounded-generation-for-equal-grades} there is a positive integer $\overline{C}_i\ll_{d,\vare_1} 1$ such that $\pi_{p^{n_i}}(\gen{A_i}{\overline{C}_i})\supseteq \pi_{p^{n_i}}(p^{\lceil n_i \vare_2 \rceil}R_i)$. So for any positive integer $C_i$ in $[\overline{C}_i,\Theta_{d,\vare_1}(\overline{C}_i)]$ Property (P3) holds. 
 
 If $n_i\ge N\delta_0^2/2$, we let $C_i:=\overline{C}_i$ and we are done. If not, we have to proceed and define $A_{i+1}$ and $C_i$, and make sure that (P3) and (P4) hold. We also notice that  
 \[
 |\pi_p(R_0)|<|\pi_p(R_1)|<\dots<|\pi_p(R_i)|
 \]
 and $\pi_p(R_j)$'s are $\f_p$-subspaces of $\pi_p(\ocal)$. Hence 
 \be\label{eq-upper-bound-number-of-steps}
 i\le d.
 \ee 
  Next we proceed as in Step 3 of proof of Proposition~\ref{prop-bounded-generation-for-equal-grades}: let $m_i:=\lceil n_i\vare_2\rceil$ and $s:\pi_{p^{n_i}}(p^{m_i}R_i)\rightarrow \gen{A_i}{\overline{C}_i}$ be a section of $\pi_{p^{n_i}}$ which sends $0$ to $0$; that means for any $x\in 
\pi_{p^{n_i}}(p^{m_i}R_i)$ we have $\pi_{p^{n_i}}(s(x))=x$ and $s(0)=0$. Let 
\be\label{eq-auxiliary-set}
A_{i+1}':=\{s(p^{m_i})^{-1}s(x)|\h x\in \pi_{p^{n_i}}(p^{m_i}R_i)\}\cup s(p^{m_i})^{-1}(\gen{A}{2}\cap p^{n_i}\ocal)\subseteq s(p^{m_i})^{-1}\gen{A_i}{\overline{C}_i}, 
\ee
and $R_{i+1}'$ be the closure of the ring generated by $A_{i+1}'$. Hence $1\in A_{i+1}'$ and 
\be\label{eq-initiation-of-the-next-step}
\pi_{p^{n_i-m_i}}(A_{i+1}')=\pi_{p^{n_i-m_i}}(R_i)=\pi_{p^{n_i-m_i}}(R_{i+1}'),
\ee
where the second equality holds as $\pi_{p^{n_i-m_i}}(R_i)$ is a ring. Let $n_{i+1}'$ be the largest positive integer such that the first $n_{i+1}'$-th grades of $R_{i+1}'$ have equal sizes; that means
\[
|{\rm gr}_{0,p}(R_{i+1}';\ocal)|=\cdots=|{\rm gr}_{n_{i+1}'-1,p}(R_{i+1}';\ocal)|.
\]
By \eqref{eq-initiation-of-the-next-step} and Property (P1) for $R_i$, we have that $n_{i+1}'\ge n_i-m_i$. 

{\bf Step 3.} In this step, we define $A_{i+1}$, $C_i$, $\alpha_i$, and $\beta_i$ under the assumption that $n_{i+1}'\le n_i+1$.

Since the first $n_{i+1}'$-th grades of $R_{i+1}'$ have equal sizes and $n_{i+1}'\ge n_i-m_i$, we have that $|\pi_{p^{\lceil n_{i+1}'/2 \rceil}}(R_{i+1}')|=|\pi_{p^{n_{i+1}'}}(R_{i+1}'\cap p^{\lfloor n_{i+1}'/2 \rfloor}\ocal)|$. Hence multiplication by the element $s(p^{m_i})^{-1}s(p^{\lfloor n_{i+1}'/2 \rfloor+m_i})\in A_{i+1}'$ induces a bijection from $\pi_{p^{\lceil n_{i+1}'/2 \rceil}}(R_{i+1}')$ to $\pi_{p^{n_{i+1}'}}(R_{i+1}'\cap p^{\lfloor n_{i+1}'/2 \rfloor}\ocal)$ (see Corollary~\ref{cor-grade-shift}). Since $\lceil n_{i+1}'/2\rceil\le n_i-m_i$ and $\pi_{p^{n_i-m_i}}(A_{i+1}')=\pi_{p^{n_i-m_i}}(R_{i+1}')$ (see \eqref{eq-initiation-of-the-next-step}), we get that 
\be\label{eq-next-to-the-larger-grade}
\pi_{p^{n_{i+1}'}}(R_{i+1}')=\pi_{p^{n_{i+1}'}}(\gen{A_{i+1}'}{2}).
\ee
Next we show that $\pi_{p^{n_{i+1}'+1}}(R_{i+1}')=\pi_{p^{n_{i+1}'+1}}(\gen{A_{i+1}'}{O_{[K:\bbq_p]}(1)})$. Let $c_0:=2$ and $V_0$ be the zero $\f_p$-vector space; we will recursively define an increasing sequence $\{c_j\}_j$ of positive integers and $\f_p$-vector spaces $V_j$ such that 

\begin{itemize}
\item[($\ast$)]	
either $\pi_{p^{n_{i+1}'+1}}(\gen{A_{i+1}'}{c_j})$ is a ring or there is a subspace $V_{j+1}$ of ${\rm gr}_{n_{i+1}',p}(R_{i+1}';\ocal)$ which is a subset of $\pi_{p^{n_{i+1}'+1}}(\gen{A_{i+1}'}{c_{j+1}}\cap p^{n_{i+1}'}\ocal)$ and $\dim_{\f_p} V_{j+1}=j+1$. 
\end{itemize}
Suppose $\pi_{p^{n_{i+1}'+1}}(\gen{A_{i+1}'}{c_j})$ is not a ring; then by \eqref{eq-next-to-the-larger-grade} there is $x_j\in R_{i+1}'\cap p^{n_{i+1}'}\ocal \cap \gen{A_{i+1}'}{3c_j}$ such that $\pi_{p^{n_{i+1}'+1}}(x_j)\not\in \pi_{p^{n_{i+1}'+1}}(\gen{A_{i+1}'}{c_j})$. Hence by the graded structure of ${\rm gr}_p(R_{i+1}';\ocal)$ and \eqref{eq-next-to-the-larger-grade}, we have that $\f_p x_j+V_j\subseteq \pi_{p^{n_{i+1}'+1}}(\gen{A_{i+1}'}{1+4c_j})$. Hence $V_{j+1}:=\f_p x_j+V_j$ and $c_{j+1}:=1+4c_j$ satisfy ($\ast$).

Since ${\rm gr}_{n_{i+1}',p}(R_{i+1}';\ocal)$ is an $\f_p$-vector space of dimension at most $\dim_{\f_p}(\ocal/p\ocal)=[K:\bbq_p]$, we get that $\pi_{p^{n_{i+1}'+1}}(R_{i+1}')=\pi_{p^{n_{i+1}'+1}}(\gen{A_{i+1}'}{C_{i}'})$ where $C_{i}'\ll_d 1$. Let $s':\pi_{p^{n_{i+1}'+1}}(R_{i+1}')\rightarrow \gen{A_{i+1}'}{C_{i}'}$ be a section of $\pi_{p^{n_{i+1}'+1}}$; and 
\be\label{eq-next-set-case-1}
A_{i+1}'':=s'(p^{n_{i+1}'})^{-1} (A_{i+1}'\cap p^{n_{i+1}'}\ocal). 
\ee 
Hence 
\be\label{eq-gets-larger}
|\pi_p(A_{i+1}'')|=|{\rm gr}_{n_{i+1}',p}(R_{i+1}';\ocal)|>|{\rm gr}_{0,p}(R_{i+1}';\ocal)|=|\pi_p(A_i)|.
\ee
By \eqref{eq-gets-larger}, as in Step 1, we have that $\pi_{\pfr}(\gen{A_{i+1}''}{C_0})$ is a subfield of $\f$. Let $A_{i+1}:=\gen{A_{i+1}''}{C_0}$. By \eqref{eq-auxiliary-set}
\be\label{eq-in-sum-product-set}
s(p^{m_i})^{C_{i}'}s'(p^{n_{i+1}'})^{C_0}A_{i+1}\subseteq \gen{A_{i}}{C_0C_{i}'\overline{C}_i},
\ee
and $0,1\in A_{i+1}$. By \eqref{eq-next-set-case-1} and \eqref{eq-auxiliary-set} we get that 
\begin{align} 
\label{eq-lower-bound}
	s(p^{m_i})^{C_{i}'}s'(p^{n_{i+1}'})^{C_0}A_{i+1}& 
	\supseteq s(p^{m_i})^{C_{i}'}s'(p^{n_{i+1}'})^{C_0-1}(A_{i+1}'\cap p^{n_{i+1}'}\ocal)
	\\
\notag
& \supseteq s(p^{m_i})^{C_i'-1}s'(p^{n_{i+1}'})^{C_0-1}(\gen{A_i}{2}\cap p^{m_i+n_{i+1}'}\ocal);
\end{align}
and by the assumption $n_{i+1}'\le n_i+1$. Therefore by \eqref{eq-gets-larger}, \eqref{eq-in-sum-product-set}, and \eqref{eq-lower-bound}, we get that $A_{i+1}$, $C_i:=C_0C_i'\overline{C}_i$, $\alpha_i:=s(p^{m_i})^{C_{i}'-1}s'(p^{n_{i+1}'})^{C_0-1}$, and $\beta_i:=s(p^{m_i})s'(p^{n_{i+1}'})$ satisfy Property (P4) (Connection between sets).

Notice that  for any integer $j$ in $[Ne\delta_0^{2},Ne]$  we get that
\begin{align}\label{eq-lower-bound-box-dimension}
|\pi_{\pfr^j}(A_{i+1})|
&\ge |\pi_{\pfr^j}(s(p^{m_i})^{-1}s'(p^{n_{i+1}'})^{-1}(\gen{A_i}{2}\cap p^{m_i+n_{i+1}'}\ocal))|
&\text{ by \eqref{eq-lower-bound} }
\\
\notag 
&= |\pi_{\pfr^{j+m_ie+n_{i+1}'e}}(\gen{A_i}{2}\cap p^{m_i+n_{i+1}'}\ocal)|
& 
\\
\notag
&\ge |\pi_{\pfr^{j+m_ie+n_{i+1}'e}}(A_i)|/|\pi_{p^{m_i+n_{i+1}'}}(R_i)|
&
\\
\notag
&\ge |\pi_{\pfr^j}(A_i)|/|\f|^{(m_i+n_i+1)e}
&
\\
\notag
&\ge |\f|^{j\vare_1/2^i-N\delta_0^2 e}\ge|\f|^{(j\vare_1/2^{i+1})+Ne(\delta_0\vare_1/2^{i+1}-\delta_0^2)}
&\text{ by (P2-b) for $A_i$ and $n_i\le N\delta_0^2/2$}
\\
\notag
&\ge |\f|^{(j\vare_1/2^{i+1})+Ne\delta_0(\vare_1/2^{d+1}-\delta_0)} \ge |\f|^{j\vare_1/2^{i+1}}
&\text{ by \eqref{eq-upper-bound-number-of-steps} and $\delta_0\le \vare_1/2^{d+1}$. }
\end{align}
Hence by \eqref{eq-lower-bound-box-dimension}, $A_{i+1}$ satisfies Property (P2) (Set conditions).

{\bf Step 4.} In this step, we define $A_{i+1}$, $C_i$, $\alpha_i$, and $\beta_i$ under the assumption that $n_{i+1}'> n_i+1$.

Since $(n_i+1)/2\le n_i-m_i$, by a similar argument as in the beginning of Step 3 (see the argument for \eqref{eq-next-to-the-larger-grade}), we get that 
\be\label{eq-getting-the-larger-grade-initiation}
\pi_{p^{n_i+1}}(R_{i+1}')=\pi_{p^{n_i+1}}(\gen{A'_{i+1}}{2});
\ee  
and so by \eqref{eq-auxiliary-set} we have
\be\label{eq-generating-large-subset}
\pi_{p^{n_i+1}}(s(p^{m_i})^2 R_{i+1}')=\pi_{p^{n_i+1}}(\gen{s(p^{m_i})A'_{i+1}}{2})\subseteq \pi_{p^{n_i+1}}(\gen{A_i}{2\overline{C}_i}).
\ee
By \eqref{eq-initiation-of-the-next-step} and the assumption that $n_{i+1}'>n_i+1$, we have $|{\rm gr}_{n_i,p}(R_{i+1}';\ocal)|=|\pi_p(R_i)|$; and so using  
$|{\rm gr}_{n_i,p}(R_i;\ocal)|>|\pi_p(R_i)|$ and $R_i$ is generated by $A_i$, we deduce that $\pi_{p^{n_i+1}}(A_i)\not\subseteq \pi_{p^{n_i+1}}(R_{i+1}')$. Suppose $a_i\in A_i$ is such that 
\be\label{eq-new-element-1}
\pi_{p^{n_i+1}}(a_i)\not\in \pi_{p^{n_i+1}}(R_{i+1}').
\ee
On the other hand, by \eqref{eq-initiation-of-the-next-step}, we have $\pi_{p^{n_i-m_i}}(R_i)=\pi_{p^{n_i-m_i}}(R_{i+1}')$. Let $n_i''$ be the largest positive integer such that $\pi_{p^{n_i''}}(a_i)\in \pi_{p^{n_i''}}(R_{i+1}')$; and so 
\be\label{eq-control-the-level-of-enlarging}
n_i-m_i\le n_i''\le n_i, \text{ and there is $r_{i+1}'\in R_{i+1}'$ such that $v_{\pfr}(a_i-r'_{i+1})/e=n_i''$. }
\ee
The way that $n_i''$ was chosen and the second part of \eqref{eq-control-the-level-of-enlarging} imply that 
\be\label{eq-enlarging-grade}
\pi_{p^{n_i''+1}}(a_i-r'_{i+1})\in {\rm gr}_{n_i'',p}(\ocal;\ocal)\setminus {\rm gr}_{n_i'',p}(R_{i+1}';\ocal).
\ee
On the other hand, by \eqref{eq-generating-large-subset}, we have
\be\label{eq-new-element-2}
s(p^{m_i})^2(a_i-r'_{i+1})=s(p^{m_i})^2a_i-s(p^{m_i})^2r'_{i+1}\in \textstyle\prod_2\gen{A_i}{\overline{C}_i}A_i-\gen{A_i}{2\overline{C}_i} \subseteq \gen{A_i}{4\overline{C}_i+1}.
\ee
By Corollary~\ref{cor-grade-shift}, we get that the map induced by the multiplication by $s(p^{m_i})^2$ is an injection from ${\rm gr}_{n_i'',p}(\ocal;\ocal)$ to $\pi_{p^{n_i''+2m_i+1}}(\ocal)$. Let us denote this map by $x\mapsto [s(p^{m_i})^2]x$. Hence by \eqref{eq-enlarging-grade} we have 
\begin{align}
	\label{eq-old-and-new}
|[s(p^{m_i})^2]({\rm gr}_{n_i'',p}(R_{i+1}';\ocal))|
&=|{\rm gr}_{n_i'',p}(R_{i+1}';\ocal)|\text{, and }
\\
\notag
[s(p^{m_i})^2](\pi_{p^{n_i''+1}}(a_i-r'_{i+1})) &\not\in [s(p^{m_i})^2]({\rm gr}_{n_i'',p}(R_{i+1}';\ocal)).
\end{align}
On the other hand, similar to \eqref{eq-generating-large-subset},  by \eqref{eq-auxiliary-set} and \eqref{eq-getting-the-larger-grade-initiation} we deduce that
\begin{align}
\label{eq-generating-the-old-portion}
[s(p^{m_i})^2]({\rm gr}_{n_i'',p}(R_{i+1}';\ocal))
&
\subseteq \pi_{p^{n_i''+1+2m_i}}(s(p^{m_i})^2\gen{A'_{i+1}}{2})\\
\notag
&
\subseteq \pi_{p^{n_i''+1+2m_i}}(\gen{s(p^{m_i}A'_{i+1}}{2})\\
\notag
&
\subseteq \pi_{p^{n_i''+1+2m_i}}(\gen{A_{i}}{2\overline{C}_i}).
\end{align}
Therefore by \eqref{eq-new-element-2}, \eqref{eq-old-and-new}, and \eqref{eq-generating-the-old-portion}, we have that 
\be\label{eq-grade-get-larger-bounded-steps}
|\pi_{p^{n_i''+1+2m_i}}(\gen{A_{i}}{4\overline{C}_i+1}\cap p^{n_i''+2m_i}\ocal)|>|{\rm gr}_{n_i'',p}(R_{i+1}';\ocal)|=|\pi_p(R_{i+1}')|=|\pi_p(R_i)|;
\ee
here we have used $n_i''\le n_i< n_i'$ and \eqref{eq-initiation-of-the-next-step}. The rest of the argument is similar to Step 3. 

Let $A_{i+1}'':=s''(p^{n_i''+2m_i})^{-1}(\gen{A_{i}}{4\overline{C}_i+1}\cap p^{n_i''+2m_i}\ocal)$ where $s''(p^{n_i''+2m_i})$ is an element of $\gen{A_{i}}{2\overline{C}_i}$ such that $\pi_{p^{n_i''+1+2m_i}}(s''(p^{n_i''+2m_i}))=1$ (and there is such an element because of \eqref{eq-generating-the-old-portion}). Hence $0,1\in A_{i+1}''$, and 
\be\label{eq:larger-mod-p}
|\pi_p(A_{i+1}'')|>|\pi_p(R_i)|=|\pi_p(A_i)|.
\ee
By \eqref{eq:larger-mod-p}, as in Step 1, we have that $\pi_{\pfr}(\gen{A_{i+1}''}{C_0})$ is a subfield of $\f$. Let $A_{i+1}:=\gen{A_{i+1}''}{C_0}$. Hence
\be\label{eq-connection-with-previous-set}
s''(p^{n_i''+2m_i})^{C_0-1} (\gen{A_i}{2}\cap p^{n_i''+2m_i}\ocal) \subseteq  
s''(p^{n_i''+2m_i})^{C_0} A_{i+1}\subseteq \gen{A_{i}}{C_0(4\overline{C}_i+1)}.
\ee
Hence $A_{i+1}$, $C_i:=C_0(4\overline{C}_i+1)$, $\alpha_i:=s''(p^{n_i''+2m_i})^{C_0-1}$, and $\beta_i:=s''(p^{n_i''+2m_i})$ satisfy Property (P4) (Connection between sets).

Notice that, for any integer $j$ in $[Ne\delta_0^{2},Ne]$  we get that
\begin{align}
\notag
|\pi_{\pfr^j}(A_{i+1})|
&\ge |\pi_{\pfr^j}(s''(p^{n_i''+2m_i})^{-1} (\gen{A_i}{2}\cap p^{n_i''+2m_i}\ocal) )|
&\text{ by \eqref{eq-lower-bound} }
\\
\notag 
&= |\pi_{\pfr^{j+n_i''e+2m_ie}}(\gen{A_i}{2}\cap p^{n_i''+2m_i}\ocal)|
& 
\\
\notag
&\ge |\pi_{\pfr^{j+n_i''e+2m_ie}}(A_i)|/|\pi_{p^{n_i''+2m_i}}(R_i)|
&
\\
\notag
&\ge |\pi_{\pfr^j}(A_i)|/|\f|^{(n_i''+2m_i)e}
&
\\
\notag
&\ge |\f|^{j\vare_1/2^i-N\delta_0^2 e}\ge|\f|^{(j\vare_1/2^{i+1})+Ne(\delta_0\vare_1/2^{i+1}-\delta_0^2)}
&\text{ by (P2-b) for $A_i$ and $n_i''\le n_i\le N\delta_0^2/2$ }
\\
\label{eq-lower-bound-box-dimension-2}
&\ge |\f|^{(j\vare_1/2^{i+1})+Ne\delta_0(\vare_1/2^{d+1}-\delta_0)} \ge |\f|^{j\vare_1/2^{i+1}}
&\text{ by \eqref{eq-upper-bound-number-of-steps} and $\delta_0\le \vare_1/2^{d+1}$. }
\end{align}
Hence by \eqref{eq-lower-bound-box-dimension-2}, $A_{i+1}$ satisfies Property (P2) (Set conditions).

{\bf Step 5.} Finishing the proof. 

By \eqref{eq-upper-bound-number-of-steps}, the above process stops in $i_0\le d+1$ steps, and we get $n_{i_0}\ge N\delta_0^2/2$. By Property (P4), inductively we get that for any integer $j$ in $[0,i_0+1]$ 
\[
\gamma_jA_j\subseteq \gen{A}{\prod_{i=0}^{j}C_i},
\]
for some $\gamma_j\in \ocal$ such that $v_{\pfr}\ll_{d,\vare_1} N\delta_0^2$. Hence
\be\label{eq-sum-product-set-final}
\gamma_{i_0}A_{i_0} \subseteq \gen{A}{\prod_{i=0}^{i_0-1}C_i}.
\ee
By Property (P3) we have 
\[
\pi_{p^{n_{i_0}}}(\gen{A_{i_0}}{C_{i_0}})\supseteq \pi_{p^{n_{i_0}}}(p^{\lceil n_{i_0} \vare_2 \rceil}R_{i_0});
\]
and so by \eqref{eq-sum-product-set-final} 
\be\label{eq-ring-generation-final}
\pi_{\pfr^{n_{i_0}e+C_{i_0}v_{\pfr}(\gamma_{i_0})}}(\gamma_{i_0}^{C_{i_0}}p^{\lceil n_{i_0} \vare_2 \rceil}R_{i_0})\subseteq \pi_{\pfr^{n_{i_0}e+C_{i_0}v_{\pfr}(\gamma_{i_0})}}(\gen{\gamma_{i_0}A_{i_0}}{C_{i_0}})\subseteq \pi_{\pfr^{n_{i_0}e+C_{i_0}v_{\pfr}(\gamma_{i_0})}}(\gen{A}{\prod_{i=0}^{i_0}C_i}).
\ee
Let $n:=n_{i_0}e+C_{i_0}v_{\pfr}(\gamma_{i_0})$, $m:=\lceil n_{i_0} \vare_2 \rceil e+C_{i_0}v_{\pfr}(\gamma_{i_0})$, $a:=p^{\lceil n_{i_0} \vare_2 \rceil}\gamma_{i_0}^{C_{i_0}}$, and $C:=\prod_{i=0}^{i_0}C_i$. Then
\begin{align}
\label{eq-exponent-conditions-final}
n\ll_{d,\vare_1,\delta_0} N
&
\ll_{d,\vare_1,\delta_0} n-m,
& \text{ (Exponent conditions) }
\\
\label{eq-bounded-generation-final}
\pi_{\pfr^n}(R_{i_0}a)
&
\subseteq \pi_{\pfr^n}(\gen{A}{C}), 
\hspace{1cm}
v_{\pfr}(a)=m,\h C\ll_{d,\vare_1} 1,
&
\text{ (Bounded generation) }
\\
\label{eq-grades-equality-final}
|{\rm gr}_{0,p}(R_{i_0};\ocal)|&=\dots=|{\rm gr}_{n-m,p}(R_{i_0};\ocal)|\ge |\pi_p(R)|.
&
\text{ (Grades equality) }
\end{align}
Since $N\gg_{d,\vare_1,\delta_0} 1$, by \eqref{eq-grades-equality-final} (Grades equality) and Step 1 of proof of proposition~\ref{prop-bounded-generation-for-equal-grades}, there is a subfield $K_0$ of $K$ with ring of integers $\ocal_0$ such that
\[
\pi_{\pfr^{n-m-4}}(R_{i_0})=\pi_{\pfr^{n-m-4}}(\ocal_0);
\]
and so 
\be\label{eq-ring-of-integers-final}
\pi_{\pfr^{n-4e}}(\ocal_0 a)=\pi_{\pfr^{n-4e}}(R a)\subseteq \pi_{\pfr^{n-4e}}(\gen{A}{C}),
\ee
and $|\pi_{p}(\ocal_0)|\ge |\pi_p(R)|\ge |\f|^{e\vare_1}$. And the claim follows by \eqref{eq-exponent-conditions-final}, \eqref{eq-bounded-generation-final}, \eqref{eq-grades-equality-final}, and \eqref{eq-ring-of-integers-final}.
\end{proof}

\begin{proof}[Proof of Theorem~\ref{t:ThickSegment}]
{\bf Step 1.} (Small residue field) In~\cite[Appendix]{BG}, Bourgain has essentially proved Theorem~\ref{t:ThickSegment} for a given fixed prime $p$, with two short comings: (1) in~\cite[Appendix]{BG}, it is assumed that $p$ is a fixed prime that is {\em large} compared to $d$; (2) In~\cite[Proposition 3.3]{BG}, Bourgain only claims that a thick {\em $\bbz_p$-segment} at certain scale can be generated in bounded number of steps.

Going through the argument in \cite[Appendix]{BG}, one can see that the largeness of $p$ compared to $d$ is used to ensure that $K$ is not a widely ramified extension of $\bbq_p$. In turn, this is used in \cite[Section A.6]{BG} to describe the structure of certain subrings of $\ocal$. In fact~\cite[Section A.6]{BG} is the only place, where the largeness of $p$ is used. So Proposition~\ref{prop:detecting-ring-of-integers-via-grading-structure} and Step 1 of proof of Proposition~\ref{prop-bounded-generation-for-equal-grades} remove this obstruction. 

Going through the proof of \cite[Proposition 3.3]{BG} in \cite[Section A.5]{BG}, one can observe that one gets the stronger version as it is presented in Lemma~\ref{lem-bounded-generation-large-mod-all-powers-till-N}.

{\bf Step 2.} (Large residue field) As in \cite{BG} (see also~\cite[Section 2.3]{SG1}), there is $A'\subseteq A$ such that $\pi_{\pfr^N}(A')$ is an $(m_0,\ldots,m_{N-1})$-regular set, and 
\[
|\pi_{\pfr^N}(A')|\ge \frac{|A|}{(2\log |\f|)^N}\ge |\f|^{N\vare/2},
\]
for $|\f|\gg_{\vare} 1$. Let 
\[
\overline{n}:=\max\{k|\h |\pi_{\pfr^k}(A')|=m_0\cdots m_{k-1}<|\f|^{k\vare/4}\}.
\]
So for any $\overline{n}+1\le l\le N$, we have
\be\label{e:LargeLevels}
\prod_{i=\overline{n}}^{l-1} m_i\ge \left(\prod_{i=0}^{l-1} m_i\right)\left(\prod_{i=0}^{\overline{n}-1} m_i\right)^{-1} \ge |\f|^{l\vare/2} |\f|^{\overline{n} \vare/4}\ge |\f|^{(l-\overline{n})\vare/4}.
\ee
We also have
\be\label{e:Thickness}
|\f|^{N-\overline{n}}\ge \prod_{i=\overline{n}}^{N-1} m_i\ge \left(\prod_{i=0}^{N-1} m_i\right)\left(\prod_{i=0}^{\overline{n}-1} m_i\right)^{-1} \ge |\f|^{N\vare/2} |\f|^{\overline{n} \vare/4}.
\ee
Therefore $M:=N-\overline{n}\ge N\vare/2+\overline{n}\vare/4=N\vare/2-(N-M)\vare/4\ge N\vare/4$. So there is a subset $B\subseteq \ocal$ such that
\begin{enumerate}
\item $\pfr^{N'}B\subseteq A$ for $N'\le N(1-\vare/4)$,
\item $\pi_{\pfr^M}(B)$ is a regular set for $M\ge N\vare/4$,
\item $|\pi_{\pfr^i}(B)|\ge |\f|^{i\vare/4}$ for any $1\le i\le M$.
\end{enumerate} 
In particular, there is $\lambda\in (B-B)\cap \ocal^{\times}$. So replacing $A$ with $\lambda^{-1}A$ and $B$ with $\lambda^{-1}(B-B)$, we can and will assume that $B$ contains $0$ and $1$. Notice that proving the claim of Theorem~\ref{t:ThickSegment} for a unit multiple $\lambda^{-1}A$ of $A$ implies the claim for $A$. 

Hence by Lemma~\ref{lem-bounded-generation-large-mod-all-powers-till-N} there are positive integers $M_1$ and $M_2$, $a\in \ocal$, and a subfield $K_0$ of $K$ with ring of integers $\ocal_0$ such that
\begin{align}
\label{eq-bounded-generation-final-proof}
\pi_{\pfr^{M_2}}(\ocal_0 a)
&
\subseteq \pi_{\pfr^{M_2}}(\gen{B}{C}),
\\
\label{eq-control-exponents-final-proof}
M_2 \ll_{d,\vare} M 
&
\ll_{d,\vare} M_2-M_1,\hspace{.5cm} v_{\pfr}(a)=M_2,
\\
\label{eq-control-box-dimension}
|\pi_p(\ocal_0)|
&
\ge
|\pi_p(\ocal)|^{\vare/4}. 
\end{align}
And so by \eqref{eq-bounded-generation-final-proof} we get
\[
\pi_{\pfr^{M_2+CN'}}(\ocal_0 (\pfr^{N'C}a))\subseteq 
\pi_{\pfr^{M_2+CN'}}(\gen{\pfr^{N'}B}{C}) \subseteq 
\pi_{\pfr^{M_2+CN'}}(\gen{A}{C}).
\]
Let $N_1:=M_1+CN'$ and $N_2:=M_2+CN'$. Hence by \eqref{eq-control-exponents-final-proof}, we get that $N_1$ and $N_2$ satisfy \eqref{eq:Thickness} for suitably chosen $\delta$ and enlarging $C$ if necessary. And the claim follows. 
\end{proof}


\begin{thebibliography}{ZZZZZZ}
\bibitem[Bou03]{Bou-RingConjecture} J. Bourgain,
{\em On the Erd\"{o}s-Volkmann and Katz-Tao ring conjectures,}
Geometry and functional analysis {\bf 13} (2003) 334--365.

\bibitem[Bou05]{Bou-MordellRevisited} J. Bourgain,
{\em Mordell's exponential sum estimate revisited,}
Journal of the American Mathematical Society {\bf 18}, no. 2, (2005) 477--499.

\bibitem[Bou08]{Bor} J. Bourgain,
{\em The sum-product in $\bbz_q$ with $q$ arbitrary},
Journal d'Analyse Mathematique  {\bf 106} (2008) 1--93.

\bibitem[BKT04]{BKT} J. Bourgain, N. Katz, T. Tao,
{\em A sum-product estimate for finite fields and applications,}
Geometry and functional analysis {\bf 14} (2004) 27--57.

\bibitem[BG09]{BG} J. Bourgain, A. Gamburd,
{\em Expansion and random walks in $\SL_d(\bbz/p^n\bbz)$:II. With an appendix by J. Bourgain},
Journal of European mathematical society {\bf 11}, no. 5., (2009) 1057--1103.

\bibitem[BG08]{BouGam-SU(2)} J. Bourgain, A. Gamburd, 
{\em On the spectral gap for finitely-generated subgroups of ${\rm SU}(2)$,} 
Inventiones Mathematicae {\bf 171}, no. 1, (2008) 83--121.


\bibitem[BG12]{BouGam-SU(d)} J. Bourgain, A. Gamburd, 
{\em A spectral gap theorem in ${\rm SU}(d)$,} 
Journal of European Mathematical Society {\bf 14} (2012) 1455--1511.

\bibitem[BISG17]{BouIoaGol-LocalSpectralGap} R. Boutonnet, A. Ioana, A. Salehi Golsefidy,
{\em Local spectral gap in simple Lie groups and application,}
Inventiones Mathematicae {\bf 208}, no. 3, (2017) 715--802.

\bibitem[BGT11]{BGT} E. Breuillard, B. Green, T. Tao, 
{\em Approximate subgroups of linear groups,} 
Geometry and functional analysis {\bf 21} (2011) 774--819.

\bibitem[CF67]{Cas-Fro-AlgebraicNT} J. Cassels, A. Fr\"{o}hlich (editors),
Algebraic Number Theory, Academic Press Inc., New York, 1967.

\bibitem[Cha02]{Cha-PolynomialFrieman} M. Chang,
{\em  A polynomial bound in Freiman's theorem,} 
Duke Mathematical Journal {\bf 113}, no. 3, (2002) 399--419.

\bibitem[CT06]{book-entropy} T. Cover, J. Thomas,
Elements of information theory (2nd ed.), Wiley-Interscience Publication, Hoboken, USA, 2006.  

\bibitem[dS15]{dS-Product} N. de Saxc\'{e},
{\em A product theorem in simple Lie groups,}
Geometry and functional analysis {\bf 25}, no. 3, 915--941.

\bibitem[EM03]{EdMi-ErdosVolkmann} G. Edgar, C. Miller,
{\em Borel subrings of the reals,}
Proceedings of the American Mathematical Society {\bf 131} (2003), no. 4, 1121--1129.

\bibitem[Gow08]{Gow-quasirandom} T. Gowers,
{\em Quasirandom groups,}
Combinatorics, Probability, and Computation, {\bf 17} (2008) 363--387.

\bibitem[Gow98]{Gow-SzemerediTheorem} T. Gowers,
{\em A new proof of Szemeredi's theorem for arithmetic progressions of length four,} 
Geometry and functional analysis {\bf 8}, no. 3, (1998) 529--551.

\bibitem[Hel05]{Hel1} H. Helfgott,
{\em Growth and generation in $\SL_2(\bbz/p\bbz)$,} 
Annals of Mathematics {\bf 167} (2008) 601--623.

\bibitem[Hel11]{Hel2} H. Helfgott,
{\em Growth in $\SL_3(\bbz/p\bbz)$,} 
Journal of the European Mathematical Society {\bf 13}, no. 3, (2011) 761--851.

\bibitem[Joh92]{Joh-CircleInvariant} A. Johnson,
{\em Measures on the circle invariant under multiplication by a nonlacunary subsemigroup of the integers,}
Isreal Journal of Mathematics {\bf 77} (1992) 211--240.

\bibitem[KT01]{KaTa-Discretized} N. Katz, T. Tao,
{\em Some connections between Falconer's distance set conjecture, and sets of Furstenberg type,}
New York journal of mathematics {\bf 7} (2001) 149--187.

\bibitem[Lan94]{Lang-AlgNum} S. Lang,
Algebraic number theory, 2nd edition, Springer-Verlag, New York, 1994.

\bibitem[LMP99]{LinMeiPer-ConvolutionCircle} E. Lindenstrauss, D. Meiri, Y. Peres,
{\em Entropy of convolutions on the circle,}
Annals of Mathematics, 2nd series, {\bf 149}, no. 3, (1999) 871--904.

\bibitem[LV]{LV} E. Lindenstrauss, P. Varj\'{u},
work in progress, June 2014.

\bibitem[Man42]{Man} H. Mann,
{\em A Proof of the fundamental theorem on the density of sums of set of positive integers,}
Annals of mathematics, 2nd Series, {\bf 43}, no. 3, (1942) 523--527.

\bibitem[Neu99]{Neu-AlgebraicNT} J. Neukrich,
Algebraic Number Theory; translated by N. Schappacher,
Springer-Verlag, Berlin, 1999.

\bibitem[PS16]{PS}  L. Pyber, E. Szab\'{o}, 
{\em Growth in finite simple groups of Lie type,} 
Journal of American mathematical society {\bf 29} (2016) 95--146.

\bibitem[Rud90]{Rud-x2x3} D. Rudolph,
{\em $\times 2$ and $\times 3$ invariant measures and entropy,}
Ergodic Theory and Dynamical Systems {\bf 10}, no. 2, (1990) 395--406. 

\bibitem[SG17]{SG1} A. Salehi Golsefidy,
{\em Super approximation, I: $\pfr$-adic semisimple case}, 
International Mathematics Research Notices {\bf 2017}, no. 23, (2017) 7190--7263.

\bibitem[SG]{SG2} A. Salehi Golsefidy,
{\em Super approximation, II: the $p$-adic and bounded power of square-free integers cases},
preprint.

\bibitem[SX91]{SaXu} P. Sarnak, X. Xue, 
{\em Bounds for multiplicities of automorphic representations,}
Duke Mathematical Journal {\bf 64}, no. 1, (1991), 207--227.

\bibitem[Ser79]{Ser-LocalFields} J-P. Serre,
Local Fields,
New York, Springer-Verlag, 1979.

\bibitem[TV06]{TV-AdditiveCombinatorics} T. Tao, V. Vu, 
Additive Combinatorics, 
Cambridge Studies in Advanced Mathematics {\bf 105}, Cambridge University Press, Cambridge, 2006.

\end{thebibliography}
\end{document}